\documentclass[11pt]{amsart}

\newcommand{\R}{\mathbb{R}}
\newcommand{\C}{\mathbb{C}}
\newcommand{\N}{\mathbb{N}}
\newcommand{\Z}{\mathbb{Z}}

\newtheorem{thm}{Theorem}

\newtheorem{prop}[thm]{Proposition}
\newtheorem{lem}[thm]{Lemma}
\newtheorem{cor}[thm]{Corollary}
\newtheorem{rem}[thm]{Remark}

\begin{document}

\title{Boundaries of Levi-flat hypersurfaces: special hyperbolic points}

\author{Pierre Dolbeault}

\address{Institut de Math\'ematiques de Jussieu, UPMC, 4, place Jussieu 75005 Paris}

\email{pierre.dolbeault@upmc.fr}

\date{\today}

\begin{abstract}
Let $S\subset \C^n$, $n\geq 3$ be a compact connected 2-codimensional submanifold having the following property: there exists a Levi-flat hypersurface whose boundary is $S$, possibly as a current. Our goal is to get examples of such $S$ containing at least one special 1-hyperbolic point: sphere with two horns; elementary models and their gluing. The particular cases of graphs are also described.
\end{abstract}

\maketitle

\thispagestyle{empty}


\section{Introduction}\label{Sec:1}

Let $S\subset \C^n$, be a compact connected 2-codimensional submanifold having the following property: there exists a Levi-flat hypersurface $M\subset \C^n\setminus S$ such that $dM=S$ (i.e. whose boundary is $S$, possibly as a current). The case $n=2$ has been intensively studied since the beginning of the eighties, in particular by Bedford, Gaveau, Klingenberg; Shcherbina, Chirka, G. Tomassini, Slodkowski, Gromov, Eliashberg; it needs global conditions: $S$ has to be contained in the boundary of a srictly pseudoconvex domain.

We consider the case $n\geq 3$; results on this case has been obtained since 2005 by Dolbeault, Tomassini and Zaitsev, local necessary conditions recalled in section 2 have to be satisfied by $S$, the singular CR points on $S$ are supposed to be elliptic and the solution $M$ is obtained in the sense of currents \cite{DTZ05,DTZ10}. More recently a regular solution $M$ has been obtained when $S$ satisfies a supplementary global condition as in the case $n=2$ \cite{DTZ09}, the singular CR points on $S$ still supposed to be elliptic.

The problem we are interested in is to get examples of such $S$ containing at least one special 1-hyperbolic point (section \ref{Sec:2.4}). The CR-orbits near a special 1-hyperbolic point are large and, assuming them compact, a careful examination has to be done (sections \ref{Sec:2.6}, \ref{Sec:2.7}). As a topological preliminary, we need a generalization of a theorem of Bishop on the difference of the numbers of special elliptic and 1-hyperbolic points (section \ref{Sec:2.8}); this result is a particular case of a theorem of Hon-Fei Lai \cite{L72}.

The first considered example is the sphere with two horns which has one special 1-hyperbolic point and three special elliptic points (section 3.4). Then we consider elementary models and their gluing to obtain more complicated examples (section 3.5). Results have been announced in \cite{D08}, and in more precise way in \cite{D09}; the first aim of this paper is to give complete proofs. Finally, we recall in detail and extend the results of \cite{DTZ09} on regularity of the solution when $S$ is a graph satisfying a supplementary global condition, as in the case $n=2$, to the case of existence of special 1-hyperbolic points, and to gluing of elemetary smooth models (section 4).

\section{Preliminaries: local and global properties of the boundary}\label{Sec:2}

\subsection{Definitions}\label{Sec:2.1}

A smooth, connected, CR submanifold $M\subset \C^n$ is called {\it minimal}
at a point
$p$ if there does not exist a submanifold
$N$ of $M$ of lower dimension through $p$ such that $HN = HM|_{N}$.
By a theorem of Sussman, all possible submanifolds $N$ such that $HN = HM|_{N}$ contain, as
germs at $p$, one of the
minimal possible dimension, defining a so called CR {\it orbit} of $p$ in $M$
whose germ at $p$ is uniquely determined.

\smallskip

Let $S$ be a smooth compact connected oriented submanifold of dimension $2n-2$. $S$ is said to be a {\it locally flat boundary} at a
point
$p$ if it locally bounds a Levi-flat hypersurface near $p$. Assume that
$S$ is CR in a small enough neighborhood $U$ of $p\in S$. If all CR
orbits of
$S$ are $1$-codimensional (which will appear as a necessary condition
for our problem), the following two conditions are equivalent \cite{DTZ05}:

$(i)$ $S$ is a locally flat boundary on $U$;

$(ii)$ $S$ is nowhere minimal on $U$.

\subsection{Complex points of $S$}\label{Sec:2.2} (i.e. singular CR points on $S$) \cite{DTZ05}.

At such a point
$p\in S$, $T_pS$ is a complex hyperplane in $T_p\C^n$. In suitable local
holomorphic coordinates $(z,w)\in \C^{n-1}\times\C$ vanishing at $p$, with $w=z_n$ and $z=(z_1,\ldots,z_{n-1})$, $S$ is locally given by the equation
$$ w= Q(z) + O(|z|^3),\quad
Q(z)= \sum_{1\leq i,j\leq n-1} (a_{ij}z_iz_j + b_{ij}z_i\overline z_j  +c_{ij}\overline z_i\overline z_j) \leqno (1)$$

$S$ is said {\it flat} at a
complex point $p\in S$ if $\sum b_{ij}z_i\overline z_j\in\lambda{\bf R},
\lambda\in {\C}$. We also say that $p$ {\it is flat}.

{\it Let $S\subset\C^n$ be a
locally flat boundary with a complex point $p$. Then $p$ is flat.}

By making the change of coordinates $(z,w)\mapsto(z,\lambda^{-1}
w)$, we get $\sum b_{ij}z_iz_j\in\R$ for all $z$.
By a change of coordinates $(z,w)\mapsto (z,w+\sum
a'_{ij}z_iz_j)$ we can choose the holomorphic term in (1) to be
the conjugate of the antiholomorphic one and so make the whole form
$Q$ real-valued.

We say that $S$ is in a {\it flat normal form} at $p$
if the coordinates $(z,w)$ as in (1) are chosen
such that $Q(z)\in{\bf R}$ for all $z\in\C^{n-1}$.

\subsubsection{Properties of $Q$.}\label{Sec:2.2.1}

Assume that $S$ is in a flat normal form; then, the quadratic form $Q$ is
real valued. If $Q$ is positive definite or negative definite, the point
$p\in S$ is said to be {\it
elliptic}; if the point $p\in S$ is not elliptic, and if $Q$ is non
degenerate, $p$ is said to be {\it hyperbolic}. From section \ref{Sec:2.4}, we will
only consider particular cases of the quadratic form $Q$.

\subsection{Elliptic points}\label{Sec:2.3}

\subsubsection{Properties of $Q$.}\label{Sec:2.3.1}

\begin{prop} (\cite{DTZ05,DTZ10}).\label{Prop:2.3.1}
Assume that $S\subset\C^n$, ($n\ge 3$) is nowhere minimal at all
its CR points
and has an elliptic flat complex point $p$. Then there exists a
neighborhood $V$ of $p$ such that $V\setminus \{p\}$ is foliated by
compact real $(2n-3)$-dimensional CR orbits diffeomorphic to the
sphere ${\bf S}^{2n-3}$ and there exists a smooth function $\nu$,
having the CR orbits as the level surfaces.
\end{prop}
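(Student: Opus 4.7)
The approach is to work in the flat normal form of \S\ref{Sec:2.2}. We may assume $p=0$, $Q$ is real-valued and, after possibly changing the sign of $w$, positive definite, so that locally $S$ is the graph $w=\rho(z)$ with $\rho(z)=Q(z)+O(|z|^3)$ real-valued. Two features follow: $S$ lies in the Levi-flat real hypersurface $\C^{n-1}\times\R\subset\C^n$, and the smooth function $\nu:=\rho|_S$ vanishes to second order at $p$ and is strictly positive elsewhere on a small neighborhood of $p$ in $S$.

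First, one obtains the topological description of the level sets. Since $Q$ is positive definite, $\rho$ has a non-degenerate strict minimum at the origin; the Morse lemma (or a direct comparison with $Q$ using homogeneity of the quadratic part) yields a neighborhood $V$ of $p$ in $\C^n$ and $\varepsilon>0$ such that, for $0<c<\varepsilon$, $\{z:\rho(z)=c\}$ is a smooth real hypersurface of $\C^{n-1}$ diffeomorphic to $\mathbf{S}^{2n-3}$. The graph projection $(z,\rho(z))\mapsto z$ is a diffeomorphism of $V\cap S$ onto its image and carries the level sets of $\nu$ onto those of $\rho$, so $V\setminus\{p\}$ is foliated by smooth $(2n-3)$-spheres $\{\nu=c\}$.

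Next, one matches this foliation with the CR orbits. A direct computation shows that a tangent vector $(\xi+\bar\xi,\,d\rho(z)\cdot(\xi+\bar\xi))\in T_{(z,\rho(z))}S$ with $\xi\in T^{1,0}\C^{n-1}$ lies in $HS$ if and only if $\partial\rho(z)\cdot\xi=0$. Consequently $HS$ annihilates $d\nu$ and is tangent to every level set $\{\nu=c\}$, so each CR orbit meeting such a level set is contained in it. For the reverse inclusion, the identity $Q(z)+Q(iz)=2\sum b_{ij}z_i\bar z_j$ together with $Q>0$ off the origin forces the Hermitian part $(b_{ij})$ to be positive definite. Thus the Levi matrix of $\rho$ is positive definite at the origin and, by continuity, on a neighborhood; hence $\{\rho=c\}\subset\C^{n-1}$ is strictly pseudoconvex. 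Under the graph projection its CR structure is identified with that of $\{\nu=c\}\subset S$, so this connected level set is a strictly pseudoconvex CR hypersurface, therefore minimal, and hence a single CR orbit of $S$.

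The main obstacle is the CR-structure identification in the last step: one must verify that $HS$ along $\{\nu=c\}$ corresponds, via the graph projection, to the maximal complex tangent distribution of $\{\rho=c\}$ as a real hypersurface of $\C^{n-1}$, so that minimality on the $\C^{n-1}$-side transfers to the equality ``level set $=$ CR orbit'' on the $S$-side. The remaining ingredients---the Morse picture of the level sets and the positivity computation for the Hermitian part of $Q$---are routine.
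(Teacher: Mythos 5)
Your argument breaks at the very first step: you assert that after passing to a flat normal form $S$ is the graph $w=\rho(z)$ of a \emph{real-valued} function $\rho=Q+O(|z|^3)$, so that $S\subset\C^{n-1}\times\R$ and $\nu=w|_S$ is a real function whose level sets can be compared with the CR orbits. But the normalization of section \ref{Sec:2.2} only makes the \emph{quadratic} part $Q$ real-valued; the remainder $O(|z|^3)$ in $w=Q(z)+O(|z|^3)$ is in general genuinely complex, and no holomorphic change of coordinates places $S$ inside the hyperplane $\{\mathrm{Im}\, w=0\}$. (If it did, $S$ would automatically be a locally flat boundary near $p$, and the hypothesis of nowhere minimality --- which your proof never invokes --- would be superfluous; in fact a point can be flat and elliptic, a condition on the $2$-jet only, while $S$ is minimal at nearby CR points, in which case the conclusion fails.) Everything downstream --- $HS\subset\ker d\nu$, the identification of $\{\nu=c\}$ with a strictly pseudoconvex hypersurface of $\C^{n-1}$, the conclusion that each connected level set is a single orbit --- is therefore established only for the model quadric $S_0:\ w=Q(z)$, not for $S$.

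The content of the proposition, and of the paper's sketch, is precisely the passage from $S_0$ to $S$: on $S_0$ the orbits are the spheres $\{Q(z)=w_0\}$ (your computation of $HS_0$ and of the positive definiteness of the Hermitian part $(b_{ij})$ is correct there), and one then uses the asymptotics $T_{(z,\varphi(z))}S=T_{(z,Q(z))}S_0+O(|z|^2)$ to compare the CR distributions of $S$ and $S_0$, integrates the perturbed distribution, and checks that the resulting global orbits of $S$ are still compact spheres close to those of $S_0$; nowhere minimality is what guarantees that these orbits are $(2n-3)$-dimensional in the first place. To repair your proof you would need either to show that a nowhere-minimal $S$ with a flat elliptic point is locally equivalent to a real graph (which is essentially the proposition itself) or to carry out this perturbation argument.
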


\begin{proof}[Sketch of Proof] (see \cite{DTZ10}).
In the case of a quadric $S_0$ ($w=Q(z)$), the CR orbits are defined by
$w_0=Q(z)$, where $w_0$ is constant. Using (1), we approximate the
tangent space to $S$ by the tangent space to $S_0$ at a point with the
same coordinate $z$; the same is done for the tangent spaces to the CR
orbits on $S$ and $S_0$; then we construct the global CR orbit on $S$
through any given point close enough to $p$.
\end{proof}

\subsection{Special flat complex points}\label{Sec:2.4}

From \cite{Bi65}, for $n=2$, in suitable local holomorphic coordinates centered at 0, \ \
$Q(z)=(z\overline z+\lambda Re \
z^2), \ \ \lambda\geq 0 $, under the notations of \cite{BeK91}; for
$0\leq\lambda<1$, $p$ is said to be {\it elliptic}, and for $1<\lambda$,
it is said to be {\it hyperbolic}. The parabolic case $\lambda=1$, not
generic, will be omitted \cite{BeK91}. When $n \geq 3$, the Bishop's
reduction cannot be generalized.

\smallskip

We say that the flat complex
point
$p\in S$ is {\it special} if in convenient holomorphic coordinates centered at 0,
$$Q(z)=\sum_{j=1}^{n-1} (z_j\overline z_j+\lambda_j Re \ z_j^2), \ \
,\lambda_j\geq 0 \leqno (2)$$

Let $z_j=x_j+iy_j, \ x_j, y_j$ real, $j=1,\ldots,n-1$, then:

\smallskip

\noindent(3)\hskip 5mm
$Q(z)=\sum_{l=1}^{n-1}
\big((1+\lambda_l)x_l^2+(1-\lambda_l)y_l^2\big)+ O(|z|^3)$.

\smallskip

A flat point $p\in S$ is said to be {\it special elliptic} if
$0\leq\lambda_j<1$ for any $j$.

A flat point
$p\in S$ is said to be {\it special
k-hyperbolic} if $1<\lambda_j$ for $j\in J\subset \{1,\ldots,n-1\}$ and \
$0\leq\lambda_j<1$ \ for $j\in \{1,\ldots,n-1\}\setminus J\not
=\emptyset$, where
$k$ denotes
the number of elements of $J$.

\smallskip

{\it Special elliptic (resp. special $k$-hyperbolic) points are elliptic
(resp. hyperbolic).}

\smallskip

Special flat complex points

\subsection{Special hyperbolic points}\label{Sec:2.5}

$S$ being given by (1), let $S_0$ be the quadric of equation
$w=Q(z)$.
\begin{lem}\label{Lem:2.5.1}
Suppose that $S_0$ is flat at $0$ and that $0$ is a special $k$-hyperbolic point.
Then, in a neighborhood of \ $0$, and with the above local coordinates, $S_0$ is CR and
nowhere minimal outside $0$, and the CR orbits of $S_0$ are the $(2n-3)$-dimensional
submanifolds given by $w=const. \not =0$.
\end{lem}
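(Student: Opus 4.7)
The plan is to slice $S_0$ by the complex hyperplanes $\{w=c\}$ and to observe that each slice with $c\ne 0$ is a strictly pseudoconvex real hypersurface of $\C^{n-1}$; the CR orbits are then read off from this strict pseudoconvexity.

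First I would verify that $S_0$ is CR on $S_0\setminus\{0\}$ and that the holomorphic coordinate $w$ is locally constant on CR orbits. Writing (2) as $Q=\sum_j\bigl(z_j\bar z_j+\tfrac{\lambda_j}{2}(z_j^2+\bar z_j^2)\bigr)$, one computes $\partial_{z_j}Q=\bar z_j+\lambda_j z_j$. Using the defining equations $\operatorname{Im}w=0$ and $\operatorname{Re}w=Q(z)$ for $S_0$, the space $H^{1,0}_pS_0\subset T^{1,0}_p\C^n$ is cut out by $v_n=0$ and $\sum_j(\bar z_j+\lambda_j z_j)v_j=0$. Because each $\lambda_j$ satisfies $0\le\lambda_j<1$ or $\lambda_j>1$, we have $1+\lambda_j>0$ and $\lambda_j-1\ne 0$, so the functional $\bar z_j+\lambda_j z_j=(1+\lambda_j)x_j+i(\lambda_j-1)y_j$ is nonzero as soon as $z\ne 0$. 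Hence $H^{1,0}S_0$ has constant complex rank $n-2$ off the origin, proving $S_0$ is CR there; the same condition also shows that $dw$ vanishes on $HS_0$, so $w|_{S_0}$ is constant along every real section of $HS_0$, and therefore every CR orbit of $S_0$ is contained in some slice $N_c:=S_0\cap\{w=c\}$.

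The key computation is then the Levi form of $N_c$ for $c\ne 0$. Since $\nabla Q$ vanishes only at the origin and $0\notin N_c$, the level set $N_c=\{Q=c\}\subset\C^{n-1}$ is a smooth real hypersurface, and a direct check shows its holomorphic tangent bundle coincides with $HS_0|_{N_c}$. With $\rho:=Q-c$ as defining function, the pure terms $z_j^2,\bar z_j^2$ are killed by $\partial\bar\partial$, so $\partial\bar\partial\rho=\sum_j dz_j\wedge d\bar z_j$, i.e.\ the standard K\"ahler form of $\C^{n-1}$; the Levi form of $N_c$ is therefore positive definite and $N_c$ is strictly pseudoconvex. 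This makes $N_c$ minimal at every point, so each CR orbit of $N_c$ is a connected component of $N_c$. Since $HN_c=HS_0|_{N_c}$ and every orbit of $S_0$ lies in a single $N_c$, the CR orbit of $p$ in $S_0$ coincides with its CR orbit in $N_c$: each is a submanifold containing the common CR tangent bundle and contained in the other, forcing equality by the universal property defining the orbit. We conclude that the CR orbits of $S_0$ are exactly the connected components of the slices $\{w=c\}\cap S_0$ with $c\ne 0$, all of real dimension $2n-3$; in particular $S_0$ is CR and nowhere minimal on $S_0\setminus\{0\}$. The only genuinely non-routine step is this identification of the orbits of $S_0$ with those of the slice $N_c$, which is precisely what the holomorphicity of the coordinate $w$ on $S_0$ is designed to deliver.
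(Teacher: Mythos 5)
Your proof is correct and follows essentially the same route as the paper: slice $S_0$ by the hyperplanes $\{w=c\}$, observe that these slices carry the same complex tangent bundle as $S_0$, and identify them (or rather their connected components) with the CR orbits. The paper merely asserts that these slices ``are of minimal dimension among submanifolds having this property,'' whereas you actually justify this via the computation $\partial\bar\partial Q=\sum_j dz_j\wedge d\bar z_j$ and the resulting strict pseudoconvexity, and you are also more careful than the paper in noting that the orbits are the connected components of the level sets; both are welcome refinements rather than a different argument.
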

\begin{proof} The submanifolds $w=const. \not =0$ have the same complex tangent space as $S_0$ and are of minimal dimension among submanifolds having this property, so they are CR orbits of codimension $1$, and from the end of section \ref{Sec:2.1}, $S_0$ is nowhere minimal outside 0.

The section $w=0$ of $S_0$ is a real quadratic cone $\Sigma'_0$ in
${\bf R}^{2n}$ whose vertex is 0 and, outside 0, it is a CR orbit
$\Sigma_0$ in the neighborhood of 0. We will improperly call $\Sigma'_0$ a
{\it singular CR orbit}.
\end{proof}

\subsection{Foliation by {\rm CR}-orbits in the neighborhood of a special $1$-hyperbolic point}\label{Sec:2.6}

We first mimic and transpose the begining of the proof of Proposition \ref{Prop:2.3.1}, i.e. of 2.4.2. in (\cite{DTZ05,DTZ09}).

\subsubsection{Local {\rm 2}-codimensional submanifolds}\label{Sec:2.6.1}

In order to use simple notations, we will assume $n=3$.

In $\C^3$,
consider the 4-dimensional submanifold
$S$ locally defined by the equation
$$w=\varphi(z)=Q(z)+O(\vert z\vert^3)\leqno (1)$$
and the 4-dimensional submanifold $S_0$ of equation
$$w=Q(z)\leqno (4)$$ with
$$Q=(\lambda_1+1)x_1^2-(\lambda_1-1)y_1^2+(1+\lambda_2)x_2^2+(1-\lambda_2)
y_2^2$$
having a special 1-hyperbolic point at 0, $(\lambda_1>1, 0\leq
\lambda_2<1)$,
and the cone $\Sigma'_0$ whose equation is: $Q=0$.
On $S_0$, a CR orbit is the 3-dimensional submanifold ${\mathcal K}_{w_0}$
whose equation is $w_0=Q(z)$. If $w_0>0$, ${\mathcal K}_{w_0}$ does not cut
the line $L=\{x_1=x_2=y_2=0\}$; if $w_0<0$, ${\mathcal K}_{w_0}$ cuts $L$ at
two points.
\begin{lem}\label{Lem:2.6.1'} $\Sigma_0=\Sigma'_0\setminus 0$ has two
connected components in a neighborhood of~0.
\end{lem}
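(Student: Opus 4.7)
The plan is to exhibit the equation $Q=0$ as a defining equation for $y_1^2$ in terms of the remaining coordinates, and then read off connectedness from the sign of $y_1$.

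First I would note that under the hypotheses $\lambda_1>1$ and $0\leq\lambda_2<1$, the four coefficients $\lambda_1+1$, $\lambda_1-1$, $1+\lambda_2$, $1-\lambda_2$ appearing in
$$Q=(\lambda_1+1)x_1^2-(\lambda_1-1)y_1^2+(1+\lambda_2)x_2^2+(1-\lambda_2)y_2^2$$
are all strictly positive. Thus the equation $Q=0$ can be rewritten as
$$y_1^2=\frac{1}{\lambda_1-1}\bigl[(\lambda_1+1)x_1^2+(1+\lambda_2)x_2^2+(1-\lambda_2)y_2^2\bigr],$$
where the right-hand side is a positive-definite quadratic form $R(x_1,x_2,y_2)$ that vanishes only at the origin of $\mathbb{R}^3_{(x_1,x_2,y_2)}$.

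Next I would observe that on $\Sigma'_0$ the condition $y_1=0$ forces $R(x_1,x_2,y_2)=0$, hence $x_1=x_2=y_2=0$, so the point is the origin. Consequently $\Sigma_0=\Sigma'_0\setminus\{0\}$ is contained in the open set $\{y_1\neq 0\}$, and splits as
$$\Sigma_0=\Sigma_0^+\sqcup\Sigma_0^-,\qquad \Sigma_0^\pm=\Sigma_0\cap\{\pm y_1>0\}.$$

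Finally I would identify each piece $\Sigma_0^\pm$ as the graph $y_1=\pm\sqrt{R(x_1,x_2,y_2)}$ over $\mathbb{R}^3\setminus\{0\}$, which is connected since $n\geq 3$ makes the punctured space connected (here explicitly $\mathbb{R}^3\setminus\{0\}$). Restricting to any neighborhood of $0$ in $\mathbb{C}^3$ and intersecting with $\Sigma_0$ preserves the same graph structure over a punctured neighborhood of $0$ in $\mathbb{R}^3$, which remains connected; thus $\Sigma_0$ has exactly two connected components near $0$. No step is really an obstacle: the whole point is that the signature of $Q$ is $(3,1)$ with the single negative direction along $y_1$, so the null cone is a one-sheeted quadric in the $y_1$-direction and disconnects into two sheets once the vertex is removed.
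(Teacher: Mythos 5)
Your proof is correct and follows essentially the same route as the paper: both arguments observe that on the cone $Q=0$ the slice $y_1=0$ reduces to the positive-definite equation $(\lambda_1+1)x_1^2+(1+\lambda_2)x_2^2+(1-\lambda_2)y_2^2=0$, whose only zero near the origin is $0$ itself, so the two components are cut out by the sign of $y_1$. Your version is slightly more complete, since you also verify that each half is connected by exhibiting it as the graph of $\pm\sqrt{R}$ over a punctured (hence connected) neighborhood in $\R^3$, a point the paper leaves implicit.
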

\begin{proof} The equation of $\Sigma'_0\cap\{y_1=0\}$ is

$(\lambda_1+1)x_1^2+(1+\lambda_2)x_2^2+(1-\lambda_2)y_2^2=0$ \
\ whose only zero , in the neighborhood of $0$, is $\{ 0\}$: the connected
components are obtained for $y_1>0$ and
$y_1<0$ respectively.
\end{proof}

Local {\rm 2}-codimensional submanifolds

\subsubsection{{\rm CR}-orbits}\label{Sec:2.6.2}

By differentiating (1), we get for the tangent spaces the following
asymptotics
$$T_{(z,\varphi (z)})S=T_{(z,Q (z)})S_0+O(\vert z\vert^2),\hskip 3mm
z\in\C^2\leqno (5)$$
Here both $T_{(z,\varphi (z)})S$ and $T_{(z,Q (z)})S_0$ depend
continuously on $z$ near the origin.

\smallskip

Consider

$(i)$ the hyperbolo\"{\i}d $H_-=\{Q=-1\}$, (then $\displaystyle Q(\frac{z}{(-Q(z))^{1/2}})=-1$), and the projection:
$$\pi_-:\C^3\setminus\{z=0\}\rightarrow H_-,\hskip 3mm (z,w)\mapsto
\frac{z}{(-Q(z))^{1/2}}, $$

$(ii)$ for every $z\in H_-$, a real orthonormal basis $e_1(z),\ldots,
e_6(z)$ of $\C^3\cong\R^6$ such that
$$e_1(z),e_2(z)\in H_zH_-, \hskip 3mm e_3(z)\in T_zH_-,$$
where $HH_-$ is the complex tangent bundle to $H_-$.

Locally such a basis can be chosen continuously depending on $z$. For
every
$(z,w)\in \C^3\setminus\{z=0\}$, consider the basis
$e_1(\pi_-(z,w)),\ldots, e_6(\pi_-(z,w))$. The unit vectors
$e_1(\pi_-(z,w_0)),e_2(\pi_-(z,w_0)),e_3(\pi_-(z,w_0))$ are tangent to
the CR orbit ${\mathcal K}_{w_0}$ in $(z,w_0)$ for $w_0<0$.
Then, from (5), we have:
$$H_{(z,\varphi (z)})S=H_{(z,Q (z)})S_0+O(\vert z\vert^2),\hskip 3mm
z\not =0,\hskip 2mm z\rightarrow 0.\leqno (6)$$

\smallskip

As in {\rm \cite{DTZ10}}, in the neighborhood of 0, denote by $E(q), q\in S\setminus
\{0\}, w<0$ the tangent space to the local CR orbit ${\mathcal K}$ on $S$ through
$q$, and by $E_0(q_0), q_0\in S_0\setminus \{0\}, w<0$ the analogous
object for $S_0$. We have :

$$E(z,\varphi(z))=E_0(z,Q(z))+O(\vert z\vert^2),\hskip 2mm z\not =0,
\hskip 2mm z\rightarrow 0\leqno (7)$$

Given $\underline q\in S$, by integration of $E(q)$, $q\in S$, we get,
locally, the CR orbit (the leaf), on $S$ through $\underline q$; given
$\underline q_0\in S_0$, by integration of
$E_0(q_0)$, $q_0\in S_0$, we get, locally, the CR orbit (the leaf), on
$S_0$ through $\underline q_0$ (theorem of Sussman). On $S_0$, a leaf
is the 3-dimensional submanifold ${\mathcal K}_{\underline q_0}=
{\mathcal K}_{w_0}={\mathcal K}_0 $ whose
equation is $w_0=Q(z)$, with $\underline q=(z_0,w_0=Q(z_0))$. $d\pi_-$
projects each $E_0(q)$, $q\in S_0, w<0$, bijectively onto
$T_{\pi(q)}H_-$, then
$\pi_-\vert_{{\mathcal K}_0}$ is a diffeomorphism onto $H_-$; this implies,
from (7), that, in a suitable neighborhood of the origin, the restriction
of $\pi_-$ to each local CR orbit of $S$ is a local
diffeomorphism.

We have: $\varphi(z)=Q(z)+\Phi (z)$ with $\Phi (z)=O(\vert
z\vert^3)$.

\subsubsection{Behaviour of local CR orbits}\label{Sec:2.6.3}

Follow the construction of
$E(z,\varphi(z))$; compare with
$E_0(z,Q(z))$. We know the integral manifold, the orbit of $E_0(z,Q(z))$;
deduce an evaluation of the integral manifold ${\mathcal K}$ of
$E(z,\varphi(z))$.
\begin{lem}\label{Lem:2.6.3'} Under the above hypotheses,
the local orbit $\Sigma$ corresponding to $\Sigma_0$
has two connected components in the neighborhood of $0$.
\end{lem}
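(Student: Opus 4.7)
The strategy is to transfer the two-component structure of $\Sigma_0$ (Lemma \ref{Lem:2.6.1'}) to the perturbed orbit $\Sigma$ on $S$, using the asymptotic (7) which says that the CR-orbit distribution $E$ on $S$ agrees with the model distribution $E_0$ on $S_0$ up to $O(|z|^2)$. The proof is essentially a continuous-dependence-of-integral-manifolds argument, with the only delicate issue being that $\Sigma'_0$ is singular at $0$.

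\textbf{Step 1 (construction of lifts).} By Lemma \ref{Lem:2.6.1'} the model orbit decomposes as $\Sigma_0 = \Sigma_0^+ \sqcup \Sigma_0^-$, where $\Sigma_0^\pm = \Sigma_0 \cap \{\pm y_1 > 0\}$, and in a small punctured neighborhood of $0$ there is a uniform lower bound $|y_1| \geq c|z|$ on $\Sigma_0$ with some $c > 0$, arising from the definiteness of $(\lambda_1+1)x_1^2 + (1+\lambda_2)x_2^2 + (1-\lambda_2)y_2^2$. Pick base points $\underline q_0^\pm \in \Sigma_0^\pm$. Using the graph equation $w = \varphi(z) = Q(z) + O(|z|^3)$, each $\underline q_0^\pm$ lifts to a nearby $\underline q^\pm \in S$, which still satisfies $\pm y_1 > 0$.

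\textbf{Step 2 (the leaves of $S$ stay on one side).} By Sussmann's theorem, the local CR-orbit on $S$ through $\underline q^\pm$ is the integral leaf of $E$ starting there. Combining (7) with continuous dependence of integral manifolds on the distribution, this leaf, restricted to a sufficiently small neighborhood $U$ of $0$, remains within an $O(|z|^3)$-tube of the leaf of $E_0$ through the corresponding point of $\Sigma_0^\pm$. In particular, shrinking $U$ so that this tube width is dominated by the gap $|y_1| \geq c|z|$, the leaf through $\underline q^+$ stays in $\{y_1 > 0\}$ and the leaf through $\underline q^-$ stays in $\{y_1 < 0\}$. This shows that $\Sigma \cap U$ has at least two connected components.

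\textbf{Step 3 (at most two components) and main obstacle.} Conversely, any point of $\Sigma$ lies on some integral leaf of $E$, and the same tube estimate shows that a leaf through a point with $y_1 > 0$ cannot meet $\{y_1 = 0\} \cap U$, since on $\Sigma_0$ the only point with $y_1 = 0$ in $U$ is the vertex $0$, which is excluded from $\Sigma$. Hence the sign of $y_1$ is a locally constant function on $\Sigma \cap U$, giving exactly two components. The main obstacle is precisely this last quantitative step: the perturbation bound in (7) is $O(|z|^2)$ only in a relative (tangential) sense, so one must calibrate $U$ so that the minimum $|y_1|/|z|$ on $\Sigma_0$ beats the deviation of the leaves of $E$ from those of $E_0$, preventing the two putative components from reconnecting through a neighborhood of the singular vertex.
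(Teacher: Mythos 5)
Your argument is correct in substance but takes a genuinely different route from the paper's. The paper's proof is a one-line static argument on the defining equation: identifying $\Sigma'$ with $\{\varphi(z)=0\}$, it restricts to the hyperplane $\{y_1=0\}$ exactly as in Lemma \ref{Lem:2.6.1'}, where the equation becomes $(\lambda_1+1)x_1^2+(1+\lambda_2)x_2^2+(1-\lambda_2)y_2^2+O(|z|^3)=0$; the quadratic part is positive definite and dominates the cubic remainder near $0$, so $\Sigma'\cap\{y_1=0\}=\{0\}$ and the two components are cut out by the sign of $y_1$. You instead run a dynamical perturbation argument on the orbit foliation, playing the gap $|y_1|\geq c|z|$ on $\Sigma_0$ against the $O(|z|^2)$ discrepancy of the distributions in (7); this is essentially the separatrix estimate that the paper only carries out afterwards (section \ref{Sec:2.6.3}) for the regular orbits ${\mathcal K}$, so your route is legitimate, and it has the merit of treating $\Sigma$ as what it is by definition — an orbit — rather than presupposing the identification $\Sigma'=\{\varphi=0\}$. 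The trade-off is length and two points you should tighten: in Step 1 the lift $(z_0,\varphi(z_0))$ of a point of $\Sigma_0$ lies on $S$ but not necessarily on the singular orbit $\Sigma$, since $\varphi(z_0)$ need not vanish, so you must choose base points on $\Sigma$ itself (e.g. by solving $\varphi=0$ on a slice $y_1=t>0$ by the intermediate value theorem); and your conclusion of \emph{exactly} two components still requires the connectedness of each piece $\Sigma\cap\{\pm y_1>0\}$, which your separation argument does not by itself supply — though the paper's proof is equally silent on this.
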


\begin{proof}
Using the real coordinates, as for Lemma \ref{Lem:2.6.1'}, $\Sigma'\cap\{y_1=0\}$. Locally, the connected components are obtained for $y_1>0$ and $y_1<0$ respectively, from formula $(1)$.
\end{proof}

We will improperly call $\Sigma'=\overline \Sigma$ \ \ a {\it singular CR
orbit} and a {\it singular leaf of the foliation}.

\smallskip

We intend to prove: 1) ${\mathcal K}$ does not cross the singular leaf through 0;

\hskip 3.2cm 2) the only separatrix is the singular leaf through 0.

\smallskip

From the orbit ${\mathcal K}_0$, construct the differential equation defining
it, and using (7), construct the differential equation defining ${\mathcal K}$.

\smallskip

In $\C^3$, we use the notations: $x=x_1, y=y_1, u=x_2, v=y_2$; it suffices to consider the particular case:
$Q=3x^2-y^2+u^2+v^2$. On $S_0$, the orbit ${\mathcal K}_0$ issued from the point
$(c,0,0,0)$ is defined by: $3x^2-y^2-u^2+v^2=3c^2$, i.e., for $x\geq 0$, $x=\frac {1}{\sqrt
3}(y^2-u^2-v^2+3c^2)^{\frac{1}{2}}=A(y,u,v)$; the local coordinates on the orbit are $(y,u,v)$. ${\mathcal K}_0$ satisfies the
differential equation: $dx=dA$. From (9), the orbit ${\mathcal K}$, issued from
$(c,0,0,0)$, satisfies $dx=dA+\Psi$
with $\Psi(y,u,v;c)=O(\vert
z\vert^2)$; hence $\Psi=d\Phi$, then $x=A+\Phi$, with $\Phi=O(\vert
z\vert^3)$. More explicitly, ${\mathcal K}$ is defined by:
$$x=x_{{\mathcal K},c}={\frac {1}{\sqrt
3}}(y^2-u^2-v^2+3c^2)^{\frac{1}{2}}+\Phi(y,u,v;c),
{\hskip 2mm} \Phi(y,u,v;c)=O({\vert z\vert}^3)$$

\bigskip

The cone
$\Sigma'_0$ whose equation is: $Q=0$ is a separatrix for the orbits ${\mathcal
K}_0$. The corresponding object $\Sigma'=\{\varphi(z)=0\}$ for $S$ has the singular point
$0$ and for
$x>0, y>0$, $u>0, v>0$ is defined by the differential equation
$dx=d(A+\Phi)$, with
$c=0$, i.e. the local equation of $\Sigma'$ is
$$x=x_{{\mathcal K},0}=\frac {1}{\sqrt
3}(y^2-u^2-v^2)^{\frac{1}{2}}+\Phi(y,u,v;0), \hskip 2mm
\Phi(y,u,v;0)=O(\vert z\vert^3)$$

For given $(y,u,v)$, $x_{{\mathcal K},c}-x_{{\mathcal K},0}=
x_{{\mathcal K}_0,c}-x_{{\mathcal
K}_0,0}+\Phi(y,u,v;c)-\Phi(y,u,v;0)$. But $x_{{\mathcal K}_0,c}-x_{{\mathcal
K}_0,0}=O(1)$ and $\Phi(y,u,v;c)-\Phi(y,u,v;0)=O(\vert z\vert^3)$.

\smallskip

As a consequence, for $x>0, y>0, u>0,v>0$, locally, $\Sigma'$ is a separatrix for
the orbits ${\mathcal K}$, and the only one. Same result for $x<0$.

\subsubsection{}\label{Sec:2.6.4} What has been done from the hyperbolo\"{\i}d $H_-=\{Q=-1\}$
can be repeated from the hyperbolo\"{\i}d $H_+=\{Q=1\}$.

\smallskip

\noindent As at the beginning of the section \ref{Sec:2.6.2}, we consider

$(i)$ the hyperbolo\"{\i}d $H_+\{Q=1\}$ and the projection:
$$\pi_+:\C^3\setminus\{z=0\}\rightarrow H_+,\hskip 3mm (z,w)\mapsto
\frac{z}{(Q(z))^{1/2}}, $$

$(ii)$ for every $z\in H_+$, a real orthonormal basis $e_1(z),\ldots,
e_6(z)$ of $\C^3\cong\R^6$ such that
$$e_1(z),e_2(z)\in H_zH_+, \hskip 3mm e_3(z)\in T_zH_+,$$
where $HH_+$ is the complex tangent bundle to $H_+$.

\subsubsection{}\label{Sec:2.6.5}

\begin{lem}\label{Lem:2.6.5} Given $\varphi$, there exists $R>0$
such that, in $B(0,R)\cap \{x>0, y>0, u>0,v>0\}\subset \C^2$, the CR
orbits
${\mathcal K}$ have $\Sigma'$ as unique separatrix.
\end{lem}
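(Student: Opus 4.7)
The plan is to promote the computation of Section \ref{Sec:2.6.3} into a quantitative statement by choosing $R$ small enough that the higher-order perturbation $\Phi$ cannot destroy the separatrix structure already visible at the quadric level. In the octant $\{x>0,\,y>0,\,u>0,\,v>0\}$ the leaf $\mathcal{K}_c$ is the graph
\[
x_{\mathcal{K},c}(y,u,v) \;=\; \tfrac{1}{\sqrt{3}}\bigl(y^{2}-u^{2}-v^{2}+3c^{2}\bigr)^{1/2} + \Phi(y,u,v;c),
\]
with $\Phi = O(|z|^{3})$, and $\Sigma'$ is the graph of $x_{\mathcal{K},0}$. So it suffices to compare
\[
x_{\mathcal{K},c} - x_{\mathcal{K},0} \;=\; \bigl(x_{\mathcal{K}_{0},c} - x_{\mathcal{K}_{0},0}\bigr) \;+\; \bigl(\Phi(y,u,v;c) - \Phi(y,u,v;0)\bigr)
\]
and show that the first, model term dominates the second whenever $c \neq 0$.

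First I would extract an explicit lower bound for the model difference: on $\{y^{2} \ge u^{2}+v^{2}\}$,
\[
x_{\mathcal{K}_{0},c} - x_{\mathcal{K}_{0},0} \;=\; \frac{\sqrt{3}\,c^{2}}{(y^{2}-u^{2}-v^{2}+3c^{2})^{1/2} + (y^{2}-u^{2}-v^{2})^{1/2}},
\]
which is strictly positive for $c\neq 0$ and bounded below by a positive multiple of $c^{2}/|z|$ away from the model cone $\{y^{2}=u^{2}+v^{2}\}$ and by a positive multiple of $|c|$ on (or near) it. The correction $\Phi(\cdot;c)-\Phi(\cdot;0)$ is controlled by $C|z|^{3}$ uniformly in $c$, since $\Phi$ is by construction an $O(|z|^{3})$ remainder with smooth parameter dependence.

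Then I would choose $R>0$ so small that $CR^{2}<1$, which forces the model gap strictly to dominate the perturbative correction in both regimes simultaneously, for every $c$ with $0<|c|\le R$ and every $(y,u,v)\in B(0,R)$. This yields $x_{\mathcal{K},c}\neq x_{\mathcal{K},0}$ throughout the octant restricted to $B(0,R)$, so no leaf other than $\Sigma'$ meets $\Sigma'$ there; consequently $\Sigma'$ separates the leaves lying in $\{Q>0\}$ from those lying in $\{Q<0\}$ exactly as in the model, and is a separatrix. Uniqueness follows because every other leaf $\mathcal{K}_c$ ($c\neq 0$) is smooth, diffeomorphic to a piece of the hyperboloid $H_{+}$ or $H_{-}$, and its closure avoids the origin, so it cannot separate the foliation in the relevant sense.

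The step I expect to be most delicate is the uniform control near the model cone, where the model gap degenerates from $c^{2}/|z|$ down to $O(|c|)$: there one has to verify that $\Phi(\cdot;c)-\Phi(\cdot;0)$ really behaves like $|z|^{3}$ and does not acquire a spurious term comparable to $|c|$ itself. Restricting to the open octant with all coordinates strictly positive, as the lemma does, keeps $(y,u,v)$ bounded away from the coordinate axes and rules out exactly the degenerations in which this comparison could fail.
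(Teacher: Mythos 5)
Your proposal follows the same route as the paper's own proof: both decompose $x_{\mathcal K,c}-x_{\mathcal K,0}$ into the model gap $x_{\mathcal K_0,c}-x_{\mathcal K_0,0}$ plus the perturbative difference $\Phi(y,u,v;c)-\Phi(y,u,v;0)=O(|z|^3)$, and both conclude by shrinking $R$ until the model gap dominates. You are in fact more explicit than the paper, which only records that the model gap is of first order in $|z|$ against a third-order correction and then asserts the existence of $R$; your closed form $\sqrt3\,c^2\big/\bigl((y^2-u^2-v^2+3c^2)^{1/2}+(y^2-u^2-v^2)^{1/2}\bigr)$ and the resulting two-regime lower bound (of order $c^2/|z|$ off the cone, $|c|$ on it) sharpen what the paper leaves implicit.

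There is, however, one step that does not hold as written, and it is precisely the point you yourself flag as delicate. Choosing $R$ with $CR^2<1$ cannot force the model gap to dominate a $c$-independent bound $C|z|^3$ \emph{for every} $c$ with $0<|c|\le R$: with $(y,u,v)$ fixed, the model gap tends to $0$ like $c^2/|z|$ (resp.\ $|c|$) as $c\to 0$, so for $|c|$ much smaller than $|z|^2$ the claimed domination fails no matter how small $R$ is. To close the argument one must exploit that the correction also vanishes with $c$ — either quantitatively, by smooth dependence of the integral manifolds of $E$ on the initial point $(c,0,0,0)$, which makes $\Phi(\cdot;c)-\Phi(\cdot;0)$ small relative to the model gap, or softly, by observing that $\mathcal K_c$ and $\Sigma$ are distinct orbits of the same involutive distribution (since $\varphi(c,0,0,0)=3c^2+O(c^3)\ne 0$) and hence disjoint, so that no $\mathcal K_c$ can cross $\Sigma'$ and the estimate is only needed to locate each leaf on a definite side. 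To be fair, the paper's one-line conclusion elides exactly the same uniformity in $c$, so your proposal is no weaker than the published argument; but as a self-contained proof the constant-chasing step needs this repair.
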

\begin{proof} When c tends to zero, , $x_{{\mathcal K},c}-x_{{\mathcal K},0}=x_{{\mathcal K}_0,c}-x_{{\mathcal
K}_0,0}=O(\vert z\vert)$, $\Phi(y,u,v;c)-\Phi(y,u,v;0)=O(\vert z\vert^3)$.
For $\varphi(z)=Q(z)+\Phi (z)$ with $\Phi (z)=O(\vert z\vert^3)$ given, in
(9), $E(z,\varphi(z))-E_0(z,Q(z))=O(\vert z\vert^2)$ and
$\Phi(y,u,v;c)-\Phi(y,u,v;0)=O(\vert z\vert^3)$ are also given. Then there
exists $R$ such that, for $\vert z\vert < R$, $x_{{\mathcal K},c}-x_{{\mathcal
K},0}>0$.
\end{proof}

\subsection{{\rm CR}--orbits near a subvariety containing a special $1$-hyperbolic point}\label{Sec:2.7}

\subsubsection{}\label{Sec:2.7.1}
In the section \ref{Sec:2.7}, we will impose conditions on $S$ and
give a local property in the neighborhood of a compact $(2n-3)$-subvariety
of $S$.

Assume that $S\subset\C^n$ $(n\geq 3)$, is a locally closed $(2n-2)$-submanifold, nowhere minimal at all its CR points, which has a unique $1$-hyperbolic flat complex point $p$, and such that:

$(i)$ $\Sigma$ being the orbit whose closure $\Sigma'$ contains $p$, then $\Sigma'$ is compact.

\smallskip

Let $q\in S$, $q\not=p$; then, in a neighborhood $U$ of $q$ disjoint from
$p$, $S$ is CR, CR-dim $S= n-2$, $S$ is non minimal and $\Sigma$ is
1-codimensional. To show that the CR orbits contitute a foliation on
$S$ whose separatrix is $\Sigma'$: this is true in $U$ since $\Sigma\cap U$
is a leaf. Moreover, let $U_0$ the ball $B(0,R)$ centered in $p=0$ in Lemma
\ref{Lem:2.6.5}, if $U\cap U_0\not=\emptyset$, the leaves in $U$ glue with the leaves
in $U_0$ on $U\cap U_0$. Since $\Sigma'$ is compact, there exists a finite number
of points $q_j\in\Sigma'$, $j=0,1,\ldots,J$, and open neighborhoods $U_j$, as
above, such that $(U_j)_{j=0}^J$ is an open covering of $\Sigma'$. Moreover the
leaves on $U_j$ glue respectively with the leaves on $U_k$ if $U_j\cap
U_h\not=\emptyset$.

\subsubsection{}\label{Sec:2.7.2}
\begin{prop}\label{Prop:2.7.2}
Assume that $S\subset\C^n$
$(n\geq 3)$, is a locally closed $(2n-2)$-submanifold, nowhere minimal at
all its CR points, which has a unique special $1$-hyperbolic flat complex
point $p$, and such that:

$(i)$ $\Sigma$ being the orbit whose closure $\Sigma'$ contains $p$, then $\Sigma'$ is compact;

$(ii)$ $\Sigma$ has two connected components $\sigma_1$,
$\sigma_2$, whose closures are homeomorphic to spheres of dimension $2n-3$.

\smallskip

Then, there exists a neighborhood \ $V$ of \ $\Sigma'$ such that \ $V\setminus
\Sigma'$ is foliated by compact real $(2n-3)$-dimensional CR orbits whose equation,
in a neighborhood of \ $p$ is $(3)$, and, the $w(=x_n)$-axis being assumed to be
vertical, each orbit is diffeomorphic to

the sphere ${\bf S}^{2n-3}$ above $\Sigma'$,

the union of two spheres ${\bf S}^{2n-3}$ under $\Sigma'$,

\noindent and there exists a smooth function $\nu$,
having the CR orbits as the level surfaces.
\end{prop}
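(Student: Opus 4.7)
The plan follows the template of Proposition~\ref{Prop:2.3.1}, adapted to accommodate the singular leaf $\Sigma'$ through $p$. I would argue in four stages: set up a local foliation near $p$ from the quadric approximation, extend it near $\Sigma'\setminus\{p\}$ by Frobenius / Sussmann, patch the local first integrals into a global $\nu$, and identify the diffeomorphism type of the leaves using hypothesis~(ii). The local analysis of Section~\ref{Sec:2.6} was carried out for $n=3$, but all the relevant formulas depend only on the quadric $Q$ with one negative coefficient; they go through verbatim for arbitrary $n\ge 3$, so I work in the general setting without further comment.

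Near $p$, Lemma~\ref{Lem:2.6.5} already produces a ball $U_0=B(0,R)$ in which $\Sigma'$ is the unique separatrix of the CR orbits. The parameterisation $x=x_{\mathcal{K},c}$ of \ref{Sec:2.6.3} provides a smooth first integral $\nu_0$ on $U_0\setminus\Sigma'$ (essentially the coordinate $w_0$ of the approximating quadric); by Lemma~\ref{Lem:2.6.3'} the level $\{\nu_0=t\}$ is locally connected for $t>0$ (the \emph{above} side, $w>0$) and has two local components for $t<0$ (the \emph{below} side, $w<0$). Away from $p$, each $q\in\Sigma'\setminus\{p\}$ has a neighborhood $U_q$ in which $S$ is smooth CR of constant CR-dimension with $(2n-3)$-codimension-one CR orbits; $HS$ is Frobenius-integrable there, producing a local first integral $\nu_q$. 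By Sussmann's theorem the local leaves coincide with the intrinsic CR orbits, so the foliations on $U_q$ and on $U_0$ (or on two overlapping $U_q$'s) agree on intersections.

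By compactness of $\Sigma'$ one obtains, as in~\ref{Sec:2.7.1}, a finite cover $\{U_j\}_{j=0}^J$. Rescaling each $\nu_j$ by monotone smooth functions on its range one patches them into a single smooth function $\nu$ on $V\setminus\Sigma'$, where $V=\bigcup_j U_j$; extending by $\nu=0$ on $\Sigma'$, the level sets $\{\nu=t\}$ are exactly the CR orbits, each closed in $S$ and contained in $V$, hence compact. For the topology of the leaves, hypothesis~(ii) gives $\bar\sigma_1\cong\bar\sigma_2\cong S^{2n-3}$. A leaf on the below side is, near $p$, the union of two local pieces, each extending through the cover to a smooth isotopic perturbation of one $\bar\sigma_j$, hence diffeomorphic to $S^{2n-3}$. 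A leaf on the above side is connected; near $p$ it is a small tubular hypersurface desingularising the transverse wedge $\bar\sigma_1\cup\bar\sigma_2$ at $p$, so globally it realises the smooth connected sum $S^{2n-3}\#S^{2n-3}\cong S^{2n-3}$.

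The main technical obstacle is the coherence of the patching. One must check that the above/below convention from the local quadric model at $p$ propagates without monodromy twist as $q$ traverses either component $\sigma_j$; equivalently, each $\sigma_j$ must have trivial normal bundle in $S$, so that the two transverse sides are globally distinguishable. This follows from the orientability of $S$ together with the transverse orientability of the codimension-one CR-orbit foliation along $\sigma_j$. Once this is settled, the $\nu_j$'s admit the monotone rescalings needed to agree on every overlap $U_j\cap U_k$, yielding the required global first integral $\nu$ and completing the identification of the leaves.
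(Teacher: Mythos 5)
Your proposal follows essentially the same route as the paper's proof: the local separatrix analysis of Section~\ref{Sec:2.6} near $p$, the gluing of local leaves along the compact orbit $\Sigma'$ carried out in \ref{Sec:2.7.1}, and a first integral constructed as in Proposition~\ref{Prop:2.3.1} (the paper obtains $\nu$ from a single transversal curve $\gamma$ through a point of $\Sigma$, extending $\gamma^{-1}$ by constancy on orbits, whereas you patch local first integrals over a finite cover --- a cosmetic difference). Your identification of the upper leaf as a connected sum $S^{2n-3}\#S^{2n-3}$ and your explicit discussion of transverse orientability and holonomy in the patching make precise what the paper disposes of with the one-line remark ``because of the geometry of the orbits near $p$,'' but the underlying argument is the same.
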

\begin{proof} From subsection \ref{Sec:2.7.1} and the following
remark:

When $x_n$ tends to 0, the
orbits tends to $\Sigma'$, and because of the geometry of the orbits near $p$, they
are diffeomorphic to a sphere above $\Sigma'$, and to the
union of two spheres under $\Sigma'$. The existence of $\nu$ is proved as in
Proposition \ref{Prop:2.3.1}, namely, consider a smooth curve $\gamma: \lbrack 0,\varepsilon)\rightarrow S$ such that $\gamma (0)=q$, where $q$ is a point of $\Sigma$ close to $p$, and $\gamma$ is a diffeomorphism onto its image $\Gamma=\gamma(\lbrack 0,\varepsilon))$. Let $\nu=\gamma^{-1}$ on the image of $\gamma$, then, close enough to $q$, every CR orbit cuts $\Gamma$ at a unique point $q(t)$, $t\in \lbrack 0,\varepsilon)$. Hence there is a unique extension of $\nu$ from $\gamma(\lbrack 0,\varepsilon))$ to $V\setminus p$ where $V$ is a neighborhood of $\Sigma'$ having CR orbits as its level surfaces. $\nu$ being smooth away from $p$, it is smooth on the orbit $\Sigma$ and, if we set $\nu(p)=\nu(q)=0$, $\nu$ is smooth on a neighborhood of $\Sigma\cup \{p\}=\Sigma'$.

\end{proof}

\subsection{Geometry of the complex points of $S$}\label{Sec:2.8}

The results of section \ref{Sec:2.8} are particular cases of theorems of H-F Lai \cite{L72}, that I learnt from F. Forstneric in July 2011.

In \cite{BeK91} E. Bedford \& W. Klingenberg cite the
following theorem of E. Bishop \cite{Bi65}[section 4, p.15]: {\it On a
$2$-sphere embedded in $\C^2$, the difference between the numbers of
elliptic points and of hyperbolic points is the Euler-Poincar\'e
characteristic, i.e.} 2. For the proof, Bishop uses a theorem of ([CS
51], section 4).

We extend the result for $n\geq 3$ and give proofs which are essentially the same than in the general case of \cite{L72,L74} but simpler.

\subsubsection{}\label{Sec:2.8.1}
Let $S$ be a smooth compact connected oriented submanifold of dimension $2n-2$. Let $G$ be the manifold of the oriented real linear
$(2n-2)$-subspaces of $\C^n$. The submanifold $S$ of $\C^n$ has a given orientation
which defines an orientation $o(p)$ of the tangent space to $S$ at any point $p\in
S$. By mapping each point of $S$ into its oriented tangent space, we get a smooth
Gauss map
$$t:S\rightarrow G$$

Denote $-t(p)$ the tangent space to $S$ at $p$ with opposite orientation $-o(p)$.

\subsubsection{Properties of $G$}\label{Sec:2.8.2}

$(a)$ dim $G=2(2n-2)$.

\begin{proof} $G$ is a two-fold covering of the Grassmannian $M_{m,k}$,
of the linear $k$-subspaces of $\R^m$ \cite{S51}[Part, section 7.9], for $m=2n$ and $k=2n-2$; they have
the same dimension. We have:
$$M_{m,k}\cong O_m/O_k\times O_{m-k}$$

But dim $O_k=\displaystyle\frac {1}{2} k(k-1)$, hence dim $M_{m,k}= \displaystyle \frac {1}{2} \Big (m(m-1)-k(k-1)-(m-k)(m-k-1)\Big)=k(m-k)$.

\smallskip

$(b)$ $G$ has the complex structure of a smooth quadric of complex dimension $(2n-2)$ of $\C P^{2n-1}$ L74, \cite{P08}.

\smallskip

$(c)$ There exists a canonical isomorphism $h: G\rightarrow \C P^{n-1}\times\C P^{n-1}$.

\smallskip

$(d)$ Homology of $G$ (cf \cite{P08}): Let $S_1,S_2$ be generators of $H_{2n-2}(G,\Z)$; we assume
that $S_1$ and
$S_2$ are fundamental cycles of complex projective subspaces of complex dimension $(n-1)$ of the complex quadric $G$. We also denote $S_1,S_2$ the ordered two factors $\C P^{n-1}$, so that $h:G\rightarrow S_1\times S_2$.

\end{proof}

\subsubsection{}\label{Sec:2.8.3}
\begin{prop}\label{Prop:2.8.3}
For $n\geq 2$, in general, $S$ has
isolated complex points.
\end{prop}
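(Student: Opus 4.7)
The plan is to realize the complex points of $S$ as the preimage, under the Gauss map $t:S\to G$, of a suitable smooth subvariety $\mathcal{C}\subset G$, and then to invoke Thom's transversality theorem.

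I would first identify $\mathcal{C}$. A point $p\in S$ is complex exactly when $T_pS$ is $J$-invariant, i.e.\ coincides with a complex hyperplane of $T_p\C^n\cong\C^n$. Complex hyperplanes form $\C P^{n-1}$; each carries two possible orientations (the natural one coming from its complex structure and its opposite). Since $G$ parametrizes \emph{oriented} real $(2n-2)$-planes, the locus $\mathcal{C}\subset G$ of planes which happen to be complex consists of two disjoint copies $\mathcal{C}_+,\mathcal{C}_-$ of $\C P^{n-1}$, each embedded as a complex projective $(n-1)$-subspace of the quadric $G$ (recognizable in the decomposition of 2.8.2(c) as representatives of the two rulings). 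By construction, the complex points of $S$ are exactly $t^{-1}(\mathcal{C})$.

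Next, count dimensions. From 2.8.2(a), $\dim_{\R} G=4n-4$, while $\dim_{\R}\mathcal{C}=2n-2$; so $\mathcal{C}$ is a smooth submanifold of $G$ of real codimension $2n-2$, equal to $\dim_{\R} S$. I would then apply Thom's transversality theorem to the family of $C^\infty$ embeddings $\iota:S\hookrightarrow\C^n$: the evaluation $(p,\iota)\mapsto t_\iota(p)$ is submersive onto $G$ (a compactly supported perturbation of $\iota$ near $p$ can rotate $T_pS$ in any prescribed direction inside $G$), so the set of $\iota$ for which $t_\iota$ is transverse to $\mathcal{C}$ is residual in $\mathrm{Emb}(S,\C^n)$. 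This is the precise meaning of ``in general''. For such $\iota$, transversality combined with $\mathrm{codim}_{\R}\mathcal{C}=\dim_{\R} S$ forces $t^{-1}(\mathcal{C})$ to be $0$-dimensional; compactness of $S$ then makes it finite, so the complex points of $S$ are isolated.

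The main obstacle is the geometric identification of $\mathcal{C}$ as a smooth embedded submanifold of the asserted dimension inside $G$ under the isomorphism $h$ of 2.8.2(c), together with the submersion property of the evaluation map just invoked. Both are, however, standard once the algebra of $h$ is unpacked; after that, the proposition is really a dimension count followed by a textbook transversality argument, and it specializes for $n=2$ to the classical fact behind Bishop's counting theorem for surfaces in $\C^2$.
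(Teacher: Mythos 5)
Your argument is essentially the paper's own proof: both identify the complex points of $S$ as the preimage under the Gauss map $t$ of the locus of complex hyperplanes (a copy of $\C P^{n-1}$ in $G$, taken with each of its two orientations), and both conclude by the dimension count $\dim t(S)+\dim \C P^{n-1}=(2n-2)+(2n-2)=\dim G$, so that the intersection is generically $0$-dimensional. Your explicit appeal to Thom transversality merely makes precise the phrase ``in general'' that the paper leaves informal, so the route is the same.
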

\begin{proof}
Let $\pi\in G$ be a complex hyperplane of $\C^n$ whose
orientation is induced by its complex structure; the set of such $\pi$ is
$H=G_{n-1,n}^{\C}=\C{\rm P}^{n-1*}\subset G$, as real submanifold. If $p$ is a
complex point of $S$, then $t(p)\in H$ or $-t(p)\in H$. The set of complex points of
$S$ is the inverse image by
$t$ of the intersections
$t(S)\cap H$ and $-t(S)\cap H$ in $G$. Since dim $t(S)= 2n-2$, dim $H=2(n-1)$, dim $G=
2(2n-2)$, the intersection is 0-dimensional, in general.
\end{proof}

\subsubsection{}\label{Sec:2.8.4}
Denoting also $S$, the fundamental cycle
of the submanifold $S$ and $t_*$ the homomorphism defined by $t$, we have:
$$t_*(S)\sim u_1S_1+u_2S_2$$where $\sim$ means {\it homologous to}.

\subsubsection{}\label{Sec:2.8.5}
\begin{lem}[proved for $n=2$ in \cite{CS51}]\label{Lem:2.8.5} With the above
notations, we have:
\ \ $u_1=u_2$;
\ \ $u_1+u_2=\chi(S)$, Euler-Poincar\'e characteristic of $S$.
\end{lem}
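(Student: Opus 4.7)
The plan is to view both invariants as cohomological integrals over $S$ and reduce each assertion to a characteristic-class identity on the target $G$. Let $\omega_i\in H^{2n-2}(G,\Z)$ denote the Poincar\'e dual of the cycle $[S_i]$; by construction $u_i=\int_S t^*\omega_i$. The two equalities will follow from expressing the classes $\omega_1+\omega_2$ and $\omega_1-\omega_2$ in terms of characteristic classes of the tautological bundles on $G$.

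For the first assertion, $u_1+u_2=\chi(S)$, I would show that $\omega_1+\omega_2=e(E)$, where $E\to G$ is the tautological oriented rank-$(2n-2)$ plane bundle. Fix any $v\in\R^{2n}\setminus\{0\}$ and form the section $s_v\colon G\to E$, $P\mapsto\mathrm{pr}_P(v)$; its zero locus $\{P\in G:v\perp P\}$ is a smooth submanifold Poincar\'e-dual to $e(E)$. A direct computation --- easy for $n=2$ in the model $G\cong S^2\times S^2$, where the zero locus is the diagonal representing $[S_1]+[S_2]$, and extended to $n\geq 3$ via the ruling description of the complex quadric --- shows that this locus is homologous to $[S_1]+[S_2]$. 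Since $t^*E=TS$, naturality gives $t^*e(E)=e(TS)$, and Chern--Gauss--Bonnet yields $u_1+u_2=\int_S e(TS)=\chi(S)$.

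For the second assertion, $u_1=u_2$, I would exploit that $\C^n$ is contractible: since $H_{2n-2}(\C^n)=0$, the class $[S]$ vanishes there, so the Poincar\'e dual of $[S]$ in the ambient space is zero. Its restriction to $S$ along the inclusion $S\hookrightarrow\C^n$ equals the Euler class of the normal bundle, hence $e(N_S)=0\in H^2(S)$. One then seeks to express $\omega_1-\omega_2\in H^{2n-2}(G)$ as a multiple of $e(E^\perp)$, namely $\omega_1-\omega_2=e(E^\perp)\cup\phi$ for an appropriate class $\phi\in H^{2n-4}(G)$. Since $t^*E^\perp=N_S$, naturality gives $t^*(\omega_1-\omega_2)=e(N_S)\cup t^*\phi=0$, whence $u_1=u_2$.

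The main obstacle is the algebraic identity $\omega_1-\omega_2=e(E^\perp)\cup\phi$ for $n\geq 3$: the two sides live a priori in different degrees of $H^*(G)$, so one must invoke the explicit ring structure of the cohomology of the complex quadric $Q_{2n-2}$ (the Schubert/Chern calculus). This is precisely where Lai's theorem \cite{L72} supplies content beyond the elementary dimension count available for $n=2$, treated by Chern--Spanier. Once this identity is established, the lemma follows.
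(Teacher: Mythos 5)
Your treatment of the first identity, $u_1+u_2=\chi(S)$, is in substance the paper's own argument: both reduce to showing that the Euler class of the tautological oriented $(2n-2)$-plane bundle $E\to G$ is Poincar\'e dual to $[S_1]+[S_2]$, and then use $t^*E=TS$ together with Gauss--Bonnet. The paper extracts the identity $\Omega=S_1^*+S_2^*$ from the Gysin sequence of the sphere bundle $V\to G$ and the computation $f_*(S^2\times e)=S_1-S_2$; your zero-locus argument is an acceptable substitute, though identifying the class of $\{P:v\perp P\}\cong S^{2n-2}$ in the middle homology of the quadric costs the same effort (it is the hyperplane-section class $h^{n-1}$, which is exactly $\omega_1+\omega_2$). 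So far, no objection.

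The second identity is where your proposal breaks down, and the obstacle you flag at the end is not a computation left to carry out --- it is an impossibility. For $n\geq 3$ the quadric $G=Q_{2n-2}$ has $H^2(G;\mathbb{Q})=\mathbb{Q}\,h$ and $H^{2n-4}(G;\mathbb{Q})=\mathbb{Q}\,h^{n-2}$ (both below the middle degree), so \emph{any} product $e(E^\perp)\cup\phi$ with $\phi\in H^{2n-4}(G)$ is a rational multiple of $h^{n-1}=\omega_1+\omega_2$. Since $\omega_1-\omega_2$ and $\omega_1+\omega_2$ are linearly independent in the two-dimensional middle cohomology $H^{2n-2}(G;\mathbb{Q})$, no identity $\omega_1-\omega_2=e(E^\perp)\cup\phi$ can hold; such a factorization exists only for $n=2$, where $\phi\in H^0$ and your argument collapses to Chern--Spanier's \cite{CS51}. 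Consequently the (correct) vanishing $e(N_S)=0$ cannot be transported into degree $2n-2$ by cup product with classes pulled back from $G$, and your proof of $u_1=u_2$ has no route to completion as written. The paper proceeds differently, never leaving the middle degree: it introduces the orthogonal-complement involution $\alpha:G\to G'$ and the normal Gauss map $n=\alpha t$, and uses the identity $(\dag)$, $h\alpha h^{-1}(x,y)=(x,-y)$, to get $\alpha^*(S_1^*+S_2^*)=S_1^*-S_2^*$; thus $u_1-u_2$ is the evaluation of the \emph{same} degree-$(2n-2)$ class along the \emph{normal} Gauss map, which is asserted to vanish ($\Omega_n=0$) because $S$ is null-homologous in $\C^n$ --- the step for which the paper ultimately defers to Lai \cite{L72}. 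If you want to salvage your approach, replace the search for $\phi$ by this involution mechanism.
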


The proof for $n=2$ works for any $n\geq 3$, namely:

\smallskip

Let $G'$ be the manifold of the oriented real linear
$2$-subspaces of $\C^n$. Let $\alpha:G\rightarrow G'$ map each oriented $2(n-1)$-subspace $R$ onto its normal 2-subspace $R'$ oriented so that $R,R' $ determine the orientation of $\C^n$.
\ \ \ $\alpha$ is a canonical isomorphism. Let $n: S\rightarrow G'$ the map defined by taking oriented normal planes; then: $n=\alpha t$ and $t=\alpha^{-1}n$, hence the mapping $h\alpha h^{-1}: S_1\times S_2\rightarrow S_1\times S_2$. Let $(x,y)$ be a point of $S_1\times S_2$, then \hskip 3mm $(\dag ) \ \ \ h\alpha h^{-1}(x,y)=(x,-y)$.

Over $G$, there is a bundle $V$ of spheres obtained by considering as fiber over a real oriented linear $(2n-2)$-subspace of $\C^n$ through 0 the unit sphere ${\bf S}^{2n-3}$ of this subspace. Let $\Omega$ be the characteristic class of $V$, and let $\Omega_t$, $\Omega_n$ denote the characteristic classes of the tangent and normal bundles of $S$. Then \hskip 3mm $t^*\Omega=\Omega_t$, $n^*\Omega=\Omega_n$.

$V$ is the Stiefel manifold of ordered pairs of orthogonal unit vectors through in $\R^{2n}\cong\C^n$. Let $f:V\rightarrow G$ the projection.

From the Gysin sequence, we see that the kernel of \hskip 3mm $f^*:H^{2n-2}(G)\rightarrow H^{2n-2}(V)$ is generated by $\Omega$. To find the kernel of $f^*$, we determine the morphism \hskip 3mm $f_*: H_{2n-2}(V)\rightarrow H_{2n-2}(G)$. A generating $2n-2)$-cycle of in $V$ is $S^2\times e$ where $S^2\cong\C P^{n-1}$ and $e$ is a point. Let $z$ be any point of $S^2$, then
from $(\dag )$, we have
$$hf(z,e)=(z,-z)$$
Therefore, we see that $f_*(S^2\times e)=S_1-S_2$. Then, the kernel of $f^*$ is $\Z$-generated by $S_1^*+S_2^*$.

With convenient orientation for the fibre of the bundle $V$, we get: $\Omega= S_1^*+S_2^*$. For convenient orientation of $S$, we get $\Omega_t.S=\chi_S=$ Euler characteristic of $S$. We have
$$\Omega_t=t^*(S_1^*+S_2^*)=t^*S_1^*+t^*S_2^*$$

$$\Omega_n=n^*(S_1^*+S_2^*)=t^*\alpha^*(S_1^*+S_2^*)=t^*(S_1^*-S_2^*)=t^*S_1^*-t^*S_2^*$$
Since $\Omega_n=0$, we get:
$$(t^*S_1^*).S=(t^*S_2^*).S=\frac{1}{2}\chi_S$$

\subsubsection{Local intersection numbers of $H$ and $t(S)$ when all
complex points are flat and special}\label{Sec:2.8.6}

\noindent $H$ is a complex linear $(n-1)$-subspace of $G$, then is homologous to
one of the $S_j$, $j=1, 2$, say $S_2$ when $G$ has its structure of complex quadric.
The intersection number of $H$ and $S_1$ is 1 and the intersection number of $H$ and
$S_2$ is 0. So, the intersection number of $H$ and $u_1S_1+u_2S_2$ is $u_1$.

\noindent In the neighborhood of a complex point $0$, $S$ is
defined by equation (1), with $w=z_n$ and
$$Q(z)=\sum_{j=1}^{n-1}\mu_j (z_j\overline z_j+\lambda_j {\mathcal R}e \ z_j^2), \ \ \mu_j>0,
\lambda_j\geq 0 \leqno (1')$$
Let $z_j=x_{2j-1}+ix_{2j},\ \ j=1,\ldots,n,$ with real $x_l$. \ \ Let $e_l$ the
unit vector of the $x_l$ axis, $l=1,\ldots,2n$.

\smallskip

For simplicity assume $n=3$:\ \ $Q(z)=\mu_1 (z_1\overline
z_1+\lambda_1 {\mathcal R}e \ z_1^2)+\mu_2 (z_2\overline
z_2+\lambda_2 {\mathcal R}e \ z_2^2)$, with $\mu_1=\mu_2=1$.

Then, up to higher order terms, $S$ is defined by:

$z_1=x_1+ix_2; \ \ z_2=x_3+ix_4;
\ \ z_3=(1+\lambda_1)x_1^2+(1-\lambda_1)x_2^2+(1+\lambda_2)x_3^2+(1-\lambda_2)x_4^2$.

In the neighborhood of $0$, the tangent space to $S$ is defined by the four
independent vectors 
\begin{multline*}
 \nu_1=e_1+2(1+\lambda_1)x_1\ e_5;\ \
\nu_2=e_2+2(1-\lambda_1)x_2\ e_5;\ \
\nu_3=e_3+2(1+\lambda_2)x_3\ e_5;
\\ \nu_4=e_4+2(1-\lambda_2)x_4\ e_5
\end{multline*}

Then, if $0$ is special elliptic or special $k$-hyperbolic with $k$ even, the tangent
plane at $0$ has the same orientation; if $0$ is special elliptic or special $k$-hyperbolic
with $k$ odd the tangent space has opposite orientation.

\subsubsection{}\label{Sec:2.8.7}
\begin{prop}[known for $n=2$ \cite{Bi65}, {\it here for $n\geq 3$}] Let $S$ be a smooth, oriented, compact, 2-codimensional, real
submanifold of $\C^n$ whose all complex points are flat and special elliptic or special 1-hyperbolic. Then, on $S$, $\sharp$
(special elliptic points) - $\sharp$
(special $1$-hyperbolic points $=\chi(S)$. If $S$ is a
sphere, this number is 2.
\end{prop}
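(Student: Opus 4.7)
The plan is to extend to $n\geq 3$ the Chern--Spanier argument behind Bishop's formula for $n=2$: realize $\sharp(\text{special elliptic})-\sharp(\text{special $1$-hyperbolic})$ as a signed homological intersection in $G$ involving $t_{*}[S]$, and evaluate it using Lemma \ref{Lem:2.8.5}. By Proposition \ref{Prop:2.8.3} together with the genericity argument of section \ref{Sec:2.8.3}, the complex points are isolated, so the two counts are well-defined integers.

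First I split the complex points according to the position of $t(p)$: by the orientation analysis of section \ref{Sec:2.8.6}, special elliptic points ($k=0$, even) satisfy $t(p)\in H$, while special $1$-hyperbolic points ($k=1$, odd) satisfy $t(p)\in -H$. Next I compute the local intersection index at each complex point. In the special coordinates of section \ref{Sec:2.8.6} the Gauss map linearizes with diagonal coefficients $2(1\pm \lambda_{j})$, so transversality against the linear equations cutting out $H$ (resp.\ $-H$) near $t(p)$ in $G$ produces a local intersection sign proportional to
\[
\prod_{j=1}^{n-1}(1+\lambda_{j})(1-\lambda_{j})=\prod_{j=1}^{n-1}(1-\lambda_{j}^{2}),
\]
which is $+1$ at a special elliptic point (every factor positive) and $-1$ at a special $1$-hyperbolic point (exactly one factor is negative).

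Summing the local contributions gives $\sharp(\text{special elliptic})-\sharp(\text{special $1$-hyperbolic})=t_{*}[S]\cdot[C]$ for a cycle $[C]$ built from $H$ and $-H$ with appropriate signs. From the proof of Lemma \ref{Lem:2.8.5}, the involution $\alpha$ acts on $H_{2(n-1)}(G)$ by fixing $[S_{1}]$ and sending $[S_{2}]\mapsto -[S_{2}]$, and satisfies $\alpha(H)=-H$; this determines $[C]$ in the $\{S_{1},S_{2}\}$-basis. Combined with $t_{*}[S]\sim u_{1}S_{1}+u_{2}S_{2}$ and $u_{1}=u_{2}=\chi(S)/2$ from Lemma \ref{Lem:2.8.5}, one obtains $t_{*}[S]\cdot[C]=\chi(S)$, which equals $2$ when $S\cong S^{2n-2}$.

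The principal difficulty is the sign bookkeeping: one must combine the orientation dichotomy of section \ref{Sec:2.8.6}, the sign of the Gauss-map Jacobian, and the action of $\alpha$ on $[H]$ consistently so that elliptic and $1$-hyperbolic contributions combine into an intersection equal to $\chi(S)$ (rather than $0$). Once the signs are settled, Lemma \ref{Lem:2.8.5} closes the argument.
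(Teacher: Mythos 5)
Your overall strategy is the paper's: interpret the signed count of complex points as an intersection number of $t_*[S]$ with the locus of complex hyperplanes in $G$, compute the local sign from the frame $\nu_1,\dots,\nu_4$ of section \ref{Sec:2.8.6} (giving $\mathrm{sign}\prod_j(1-\lambda_j^2)$, i.e.\ $+1$ at a special elliptic point and $-1$ at a special $1$-hyperbolic point), and close with Lemma \ref{Lem:2.8.5}. That part is sound and matches the paper.

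The genuine gap is your splitting step. Whether $t(p)$ lies in $H$ or in $-H$ is \emph{not} determined by the type of the complex point: it records whether the orientation $o(p)$ induced on the complex hyperplane $T_pS$ by the global orientation of $S$ agrees with its complex orientation --- the paper's condition $(\ast\ast)$ --- and this is a global matter (think of the north and south poles of a sphere: both can be elliptic, yet the induced orientations on the two horizontal tangent hyperplanes are opposite). Both elliptic and $1$-hyperbolic points occur on both sides of the dichotomy. Your claim would force $t_*[S]\cdot[H]=\sharp(\text{special elliptic})$, hence $\sharp(\text{special elliptic})=u_1=\chi(S)/2$; this fails already for the paper's horned sphere of section \ref{Sec:3.3}, where $\chi(S)=2$ gives $u_1=u_2=1$ but there are three special elliptic points (the four complex points must in fact distribute so that each class sums to $1$, e.g.\ two elliptic plus the hyperbolic point on one side and one elliptic on the other). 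The correct bookkeeping, as in the paper, is to partition the complex points by $(\ast\ast)$, sum the local signs $\pm1$ within each class to get $u_1$ and $u_2$ respectively, and then add, using only that the two classes partition the complex points, to obtain $\sharp(\text{special elliptic})-\sharp(\text{special $1$-hyperbolic})=u_1+u_2=\chi(S)$. Your final identity survives because the total is independent of how the points distribute, but as written the argument rests on false intermediate statements; moreover the ``sign bookkeeping'' you flag as the principal difficulty (the identification of $[C]$ and the role of $\alpha$, which maps $G$ to $G'$ rather than acting on $G$, so ``$\alpha(H)=-H$'' needs restating via $h\alpha h^{-1}$) is precisely the part left unverified.
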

\begin{proof}
Let $p\in S$ be a complex point and $\pi$ be the
tangent hyperplane to $S$ at $\pi$. Assume that

(**) {\it the orientation of $S$
induces, on $\pi$, the orientation given by its complex structure},

\noindent then $\pi\in H$.

If $p$ is elliptic, the
intersection number of $H$ and $t(S)$ is 1; if $p$ is $1$-hyperbolic,
the intersection number of $H$ and $t(S)$ is -1 at $p$.

From the beginning of section \ref{Sec:2.8.6}, the sum of the intersection numbers of $H$ and $t(S)$ at
complex points $p$ satisfying (**) is $u_1$. Reversing the condition
(**), and using Lemma \ref{Lem:2.8.5}, we get the Proposition.
\end{proof}

\section{Particular cases: horned sphere; elementary models and their gluing}\label{Sec:3}

\subsection{}\label{Sec:3.1}

We recall the following Harvey-Lawson theorem with real parameter to be used later.

\subsubsection{}\label{Sec:3.1.1}
Let $E\cong{\bf R}\times\C^{n-1}$, and $k:{\bf
R}\times\C^{n-1}\rightarrow {\bf R}$ be the projection.
Let $N\subset E$ be a
compact, (oriented) CR subvariety of $\C^{n+1}$ of
real dimension $2n-2$ and CR dimension
$n-2$, $(n\geq 3)$, of class $C^\infty$, with negligible
singularities (i.e. there exists a closed subset $\tau\subset N$ of
$(2n-2)$-dimensional Hausdorff measure $0$
such that $N\setminus \tau$ is a CR submanifold).
Let $\tau'$ be the set of all points $z\in N$
such that either $z\in\tau$ or $z\in N\setminus\tau$
and $N$ is not transversal to the complex hyperplane
$k^{-1}(k(z))$ at $z$.
Assume that $N$, as a
current of integration, is $d$-closed and satisfies:

{\rm (H)} there exists a closed subset $L\subset{\R}_{x_1}$
with
$ H^1(L)=0$ such that for every $x\in k(N)\setminus L$,
the fiber $k^{-1}(x)\cap N$ is connected
and does not intersect $\tau'$.

\subsubsection{}\label{Sec:3.1.2}
\begin{thm}[\cite{DTZ10} (see also \cite{DTZ05})]\label{Thm:3.1.2} Let $N$
satisfy {\rm (H)} with
$L$ chosen accordingly. Then, there exists, in $E'= E\setminus k^{-1}(L)$, a
unique
$ C^\infty$ Levi-flat $(2n-1)$-subvariety $M$ with negligible singularities in
$E'\setminus N$, foliated by complex $(n-1)$-subvarieties, with the properties
that
$M$ simply (or trivially) extends
to
$E'$ as a $(2n-1)$-current (still denoted $M$) such that $dM=N$ in
$E'$.1
The leaves are the sections by the hyperplanes $E_{x_1^0}$, $x_1^0\in
k(N)\setminus L$, and are the solutions of the ``Harvey-Lawson problem'' for
finding a holomorphic subvariety in $E_{x_1^0}\cong\C^n$ with prescribed boundary $N\cap E_{x_1^0}$.
\end{thm}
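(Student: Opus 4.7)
My plan is to solve the Harvey--Lawson problem fiberwise on each slice $E_{x_1^0}\cong \C^{n-1}$, $x_1^0\in k(N)\setminus L$, and then to assemble the resulting holomorphic chains into a single $C^\infty$ Levi-flat current $M$ on $E'$ whose boundary, as a current, is $N$.

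First, for each $x_1^0\in k(N)\setminus L$, I would examine the slice $N_{x_1^0}:=N\cap E_{x_1^0}$. Hypothesis (H) guarantees that $N_{x_1^0}$ is connected and disjoint from $\tau'$, so $N$ is transverse to the complex hyperplane $E_{x_1^0}$ at every smooth point of $N_{x_1^0}$; consequently $N_{x_1^0}$ is a compact oriented $(2n-3)$-dimensional $C^\infty$ submanifold of $\C^{n-1}$ (with negligible singularities inherited from $\tau$) of CR dimension $n-2$, i.e.\ maximally complex in $\C^{n-1}$. Since $N$ is $d$-closed and the slicing is transverse, each $N_{x_1^0}$ inherits $d$-closedness as a current. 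The classical Harvey--Lawson theorem with negligible singularities then produces a holomorphic $(n-1)$-chain $V_{x_1^0}\subset\C^{n-1}\setminus N_{x_1^0}$ with $dV_{x_1^0}=N_{x_1^0}$, and connectedness of $N_{x_1^0}$ forces uniqueness of this chain.

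Next I would form $M:=\bigcup_{x_1^0}\{x_1^0\}\times V_{x_1^0}\subset E'$ and check the announced regularity. At a point $p\in M\setminus N$ the corresponding leaf $V_{x_1^0}$ is locally a smooth complex $(n-1)$-submanifold of $\C^{n-1}$; because the family $\{N_{x_1^0}\}$ depends smoothly on $x_1^0$ and the Harvey--Lawson solution is unique, a CR-extension argument lets one cut out the leaves locally by a smooth family of holomorphic functions of $z\in\C^{n-1}$ depending smoothly on the parameter $x_1^0$. The implicit function theorem then presents $M$ as a local $C^\infty$ graph, making $M\setminus N$ a $C^\infty$ Levi-flat real $(2n-1)$-hypersurface of $E'\setminus N$ foliated by the complex leaves $V_{x_1^0}$. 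Simple extension defines $M$ as a current on $E'$, and a Fubini-type computation along $k$, using $dV_{x_1^0}=N_{x_1^0}$ fiberwise, yields $dM=N$ on $E'$. Uniqueness of $M$ comes directly from the fiberwise uniqueness.

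The hard part is the passage from mere continuous to $C^\infty$ parameter dependence of the chains $V_{x_1^0}$: Harvey--Lawson produces each $V_{x_1^0}$ only as a current and yields no transversal regularity for free, so one must combine uniqueness with CR-extension and elliptic-regularity techniques (as carried out in \cite{DTZ10}) in order to upgrade continuity to smoothness in the parameter. Once that is in place, absorbing the negligible sets $\tau'$ and $L$ into the simple extension is routine, since currents of locally finite mass do not see subsets of smaller Hausdorff dimension.
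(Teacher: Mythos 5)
There is nothing in the paper to compare your argument against: Theorem~\ref{Thm:3.1.2} is stated here as a recalled result, imported verbatim from \cite{DTZ10} (see also \cite{DTZ05}), and the author gives no proof of it. Your outline --- slice $N$ by the level sets of $k$, check that hypothesis (H) plus transversality makes each slice $N\cap E_{x_1^0}$ a compact, connected, maximally complex $(2n-3)$-cycle, solve the Harvey--Lawson problem fiberwise, reassemble the chains into $M$, and verify $dM=N$ by a Fubini computation --- is precisely the strategy of the cited source, so in that sense you have reconstructed the intended proof rather than found a new one. Two small cautions. First, you carry over a dimensional slip from the paper's own setup: the theorem says $E_{x_1^0}\cong\C^n$, so the slices live in $\C^n$ (not $\C^{n-1}$), where a $(2n-3)$-dimensional maximally complex cycle bounds an $(n-1)$-dimensional holomorphic chain of real codimension $2$; in $\C^{n-1}$ the dimensions would not close up. Second, uniqueness of the fiberwise solution is not really a consequence of connectedness of the slice but of the fact that the difference of two compactly supported solutions would be a $d$-closed holomorphic $(n-1)$-chain with compact support in $\C^n$, hence zero; connectedness in (H) is needed elsewhere (to control the behaviour of the leaves in the parameter). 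Finally, you are right that the genuinely hard step is upgrading measurable/continuous dependence of the chains $V_{x_1^0}$ on $x_1^0$ to the asserted $C^\infty$ Levi-flat structure; since you defer exactly that step to \cite{DTZ10}, your text is an accurate road map of the external proof rather than a self-contained one, which is all the present paper itself provides.
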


\subsubsection{}\label{Sec:3.1.3}
\begin{rem}\label{Rem:3.1.3}
Theorem~\ref{Thm:3.1.2} is valid in the space $E\cap \{\alpha_1< x_1<\alpha_2\}$, with the corresponding condition (H). Moreover, since $N$ is compact, for convenient coordinate $x_1$, we can assume $x_1\in [0,1]$.
\end{rem}

\subsection{}\label{Sec:3.2}
{\it To solve the boundary problem by Levi-flat hypersurfaces, $S$ has to satisfy necessary
and sufficient local conditions. A way to prove that these conditions can
occur is to construct an example for which the solution is obvious.}

\subsection{Sphere with one special 1-hyperbolic point (sphere with two
horns): Example}\label{Sec:3.3}

\subsubsection{}\label{Sec:3.3.1}
In $\C^3$, let $(z_j)$, $j=1,2,3$, be the complex
coordinates and $z_j=x_j+iy_j$. In ${\bf R}^6\cong\C^3$, consider the
4-dimensional subvariety (with negligible singularities) $S$ defined
by:

$y_3=0$

$0\leq x_3\leq 1$; \hskip 2mm
$x_3(x_1^2+y_1^2+x_2^2+y_2^2+x_3^2 -1)+(1-x_3)(x_1^4+y_1^4+x_2^4+y_2^4+4x_1^2
-2y_1^2+x_2^2+y_2^2)=0$

$-1\leq x_3\leq 0$; \hskip 2mm$x_3=x_1^4+y_1^4+x_2^4+y_2^4+4x_1^2-2y_1^2+x_2^2+y_2^2$

\smallskip

The singular set of $S$ is the 3-dimensional section $x_3=0$ along which the
tangent space is not everywhere (uniquely) defined. $S$ being in the real hyperplane $\{y_3=0\}$, the complex tangent spaces to
$S$ are $\{x_3=x^0\}$ for convenient $x^0$.

\subsubsection{}\label{Sec:3.3.2}
The tangent space to the hypersurface $f(x_1,y_1,x_2,y_2,x_3)=0$ in $\R^5$ is

$$X_1f'_{x_1}+Y_1f'_{y_1}+X_2f'_{x_2}+Y_2f'_{y_2}+X_3f'_{x_3}=0,$$
Then, the tangent space to $S$ in the hyperplane $\{y_3=0\}$ is:

for $0\leq x_3$,

\begin{multline*}
2x_1\lbrack x_3+2(1-x_3)(x_1^2+2) \rbrack X_1+2y_1\lbrack x_3+2 (1-x_3)(y_1^2-1)\rbrack Y_1\\+2x_2\lbrack x_3+(1-x_3)(2x_2^2+1) \rbrack X_2+2y_2\lbrack x_3+(1-x_3)(2y_2^2+1)\rbrack Y_2
\\+\lbrack (x_1^2+y_1^2+x_2^2+y_2^2+3x_3^2 -1)\hspace*{2cm}\\ -(x_1^4+y_1^4+x_2^4+y_2^4+4x_1^2-2y_1^2+x_2^2+y_2^2)\rbrack X_3=0;
\end{multline*}

for $x_3\leq 0$,

$$4(x_1^2+2)x_1X_1 +4(y_1^2-1)y_1Y_1 +2(2x_2^2+1)x_2X_2 +2(2y_2^2+1)y_2Y_2-X_3=0.$$

\subsubsection{}\label{Sec:3.3.3}
The complex points of $S$ are defined by the vanishing of
the coefficients of $X_j$, j=1,2,3,4 in the equation of the tangent spaces

\noindent for $0\leq x_3\leq 1$,

\smallskip

$x_1\lbrack x_3+2(1-x_3)(x_1^2+2)\rbrack=0$,

$y_1\lbrack x_3+2 (1-x_3)(y_1^2-1)\rbrack=0$,

$x_2\lbrack x_3+(1-x_3)(2x_2^2+1) \rbrack =0$,

$y_2\lbrack x_3+(1-x_3)(2y_2^2+1)\rbrack=0$.

\smallskip

We have the solutions

\smallskip

\noindent $h$: $x_j=0,y_j=0$, $(j=1, 2)$, $x_3=0$;

\noindent $e_3$: $x_j=0,y_j=0$, $(j=1,2)$, $x_3=1$.

\noindent for $x_3\leq 0$,

\smallskip

$(x_1^2+2)x_1=0$,

$(y_1^2-1)y_1=0$,

$(2x_2^2+1)x_2=0$,

$(2y_2^2+1)y_2=0$.

\smallskip

We have the solutions

\smallskip

\noindent $h$: $x_j=0,y_j=0$, $(j=1, 2)$, $x_3=0$;

\noindent $e_1,e_2$: $x_1=0, y_1=\pm 1, x_2=0, y_2=0, x_3=-1$.

\smallskip

Remark that the tangent space to $S$ at $h$ is well defined.
Moreover, the set $S$ will be smoothed along its section by the hyperplane $\{x_3=0\}$ by a small deformation leaving $h$ unchanged. {\it In the following $S$ will denote this smooth submanifold}.

\subsubsection{}\label{Sec:3.3.4}
\begin{lem}\label{Lem:3.3.4}
The points $e_1,e_2,e_3$ are special elliptic;
the point $h$ is special $\{1\}$-hyperbolic.
\end{lem}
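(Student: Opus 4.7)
\medskip

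\noindent\textbf{Proof plan.} The strategy is to verify that $S$ is already in flat normal form at each of the four complex points, compute the quadratic jet $Q$ by Taylor expansion of the defining equation, and read off $(\mu_j,\lambda_j)$ by matching $Q$ against the model $(1')$. By the definitions in Section~\ref{Sec:2.4}, a point will then be special elliptic iff all $\lambda_j<1$, and special $1$-hyperbolic iff exactly one $\lambda_j$ exceeds $1$. The common setup is that $S\subset\{y_3=0\}$, and from Section~\ref{Sec:3.3.3} the tangent hyperplane to $S$ at each complex point is $\{X_3=Y_3=0\}$, which coincides with the complex hyperplane $\{dz_3=0\}$; so near each complex point $p$ one can solve for $z_3-z_3(p)$ as a real-valued graph $w=\varphi(z_1,z_2)=Q(z)+O(|z|^3)$, after possibly flipping the sign of $w$ (permitted by the real rescaling allowed in Section~\ref{Sec:2.2}). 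This is precisely the flat normal form.

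\medskip

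At the origin $h$, the lower equation ($x_3\le 0$) is already solved for $x_3$ and its quadratic part is $Q_h=4x_1^2-2y_1^2+x_2^2+y_2^2$. On the upper side ($x_3\ge 0$) the equation reads $x_3F+(1-x_3)G=0$ with $F(0)=-1$ and $G=Q_h+O(|z|^4)$, so the implicit function theorem gives $x_3=Q_h+O(|z|^3)$: both sides agree to second order, and any smoothing across $\{x_3=0\}$ that fixes $h$ and its tangent hyperplane leaves $Q_h$ unchanged. Matching $Q_h$ against $\mu_1[(1+\lambda_1)x_1^2+(1-\lambda_1)y_1^2]+\mu_2[(1+\lambda_2)x_2^2+(1-\lambda_2)y_2^2]$ with $\mu_j>0$, $\lambda_j\ge 0$ uniquely forces $(\mu_1,\lambda_1)=(1,3)$ and $(\mu_2,\lambda_2)=(1,0)$; since exactly one $\lambda_j$ exceeds $1$, $h$ is special $\{1\}$-hyperbolic.

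\medskip

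For $e_3=(0,0,0,0,1)$ I substitute $s=x_3-1$ in the upper equation: $G$ is of order $\ge 2$ in the small variables and $F=2s+x_1^2+y_1^2+x_2^2+y_2^2+s^2$ contributes the linear-in-$s$ term, yielding $s=-\tfrac{1}{2}(x_1^2+y_1^2+x_2^2+y_2^2)+O(|z|^3)$; after replacing $w$ by $-(z_3-1)$, the quadratic jet becomes $Q_{e_3}=\tfrac{1}{2}(x_1^2+y_1^2+x_2^2+y_2^2)$, so $(\mu_j,\lambda_j)=(\tfrac{1}{2},0)$ for $j=1,2$ and $e_3$ is special elliptic. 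At $e_1=(0,1,0,0,-1)$ I set $\tilde y_1=y_1-1$ and $\tilde x_3=x_3+1$ in the lower equation; direct Taylor expansion produces $\tilde x_3=4x_1^2+4\tilde y_1^2+x_2^2+y_2^2+O(|z|^3)$, so $(\mu_1,\lambda_1)=(4,0)$ and $(\mu_2,\lambda_2)=(1,0)$, again special elliptic; the analysis at $e_2$ is identical by the symmetry $y_1\mapsto -y_1$ of the lower equation. The only real obstacle is the second-order agreement of the two pieces of $S$ at $h$, which is secured by $F(0)=-1$; once that is in place, the remainder of the proof is routine Taylor expansion and coefficient matching against $(1')$.
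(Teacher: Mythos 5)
Your proposal is correct and follows essentially the same route as the paper: Taylor-expand the defining equations at each of the four complex points, put $S$ in flat normal form (automatic here since $S\subset\{y_3=0\}$ and the tangent hyperplane at each complex point is $\{dz_3=0\}$), and read off the invariants $\lambda_j$ from the quadratic jet, obtaining $\lambda_1=3,\lambda_2=0$ at $h$ and $\lambda_j<1$ at $e_1,e_2,e_3$. The extra justifications you supply (the implicit-function-theorem step on the branch $x_3\ge 0$ at $h$, the absorption of the coefficients $\mu_j$ by holomorphic rescaling, and the remark that the smoothing along $\{x_3=0\}$ is harmless because both branches already share the same $2$-jet at $h$) are welcome refinements of the paper's computation rather than a different method.
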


\begin{proof}
Point $e_3$: Let $x'_3=1-x_3$, then the equation of $S$ in the neighborhood of $e_3$ is:

$(1-x'_3)(x_1^2+y_1^2+x_2^2+y_2^2+x_3^{'2}-2x'_3)-x'_3(x_1^4+y_1^4+x_2^4+y_2^4+4x_1^2-2y_1^2+x_2^2+y_2^2)=0$, {\it i.e.}

$2x'_3=x_1^2+y_1^2+x_2^2+y_2^2)+ O(|z|^3)$, or $w=z\overline z + O(|z|^3)$

\noindent then $e_3$ is special elliptic.

\smallskip

\noindent Points $e_1,e_2$: Let $y'_1=y_1\pm 1$, $x'_3=x_3+1$, then the equation of $S$ in the neighborhood of
$e_1,e_2$ is:

$x'_3-1=x_1^4+(y'_1\mp 1)^4+x_2^4+y_2^4+4x_1^2-2(y'_1\mp 1)^2+x_2^2+y_2^2$

$=x_1^4+y_1^{'4}\mp 4y_1^{'3}+6y_1^{'2}\mp 4y'_1+1+x_2^4+y_2^4+4x_1^2-2(y'_1\mp 1)^2+x_2^2+y_2^2$, then

$x'_3=x_1^4+y_1^{'4}\mp 4y_1^{'3}+4y_1^{'2}+x_2^4+y_2^4+4x_1^2+x_2^2+y_2^2$, {\it i.e.}

$x'_3=4x_1^2+4y_1^{'2}+x_2^2+y_2^2+ O(|z|^3)$, or $w=4z_1\overline z_1+z_2\overline z_2$,

\noindent then $e_1,e_2$ are special elliptic.

\smallskip

\noindent Point $h$: The equation of $S$ in the neighborhood of $h$ is:

\noindent for $x_3\geq 0$,

\begin{multline*}
x_3(x_1^2+y_1^2+x_2^2+y_2^2+x_3^2 -1)\\+(1-x_3)(x_1^4+y_1^4+x_2^4+y_2^4+4x_1^2
-2y_1^2+x_2^2+y_2^2)=0
\end{multline*}

\noindent for $x_3\leq 0$,

$x_3=x_1^4+y_1^4+x_2^4+y_2^4+4x_1^2-2y_1^2+x_2^2+y_2^2$, {\it i.e.}

\smallskip

$x_3=4x_1^2-2y_1^2+x_2^2+y_2^2+ O(|z|^3)$, in both cases, up to the third order terms, {\it i.e.}:
$w=z_1\overline z_1+z_2\overline z_2+3 {\mathcal R}e \ z_1^2$,

\noindent then $h$ is special $\{1 \}$-hyperbolic.
\end{proof}

\subsubsection{Section $\Sigma'=S\cap\{x_3=0\}$}\label{Sec:3.3.5}
Up to a small
smooth deformation, its equation is:

\smallskip

$x_1^4+y_1^4+x_2^4+y_2^4+4x_1^2-2y_1^2+x_2^2+y_2^2=0$, in $\{x_3=0\}$.

\noindent The tangent cone to $\Sigma'$ at 0 is: $4x_1^2-2y_1^2+x_2^2+y_2^2=0$.

\smallskip

\noindent Locally, the section of $S$ by the coordinate 3-space

$x_1, y_1, x_3$ is: \hskip 3mm $x_3=4x_1^2-2y_1^2+O(|z|^3)$

$x_2, y_2, x_3$ is: \hskip 3mm $x_3 =x_2^2+y_2^2+O(|z|^3)$

\smallskip

\noindent 3.3.1'. {\it Shape of $\Sigma'=S\cap\{x_3=0\}$ in the neighborhood of
the origin
$0$ of $\C^3$}.

\begin{lem}\label{Lem:3.3.1'}
Under the above hypotheses and notations,

$(i)$ $\Sigma=\Sigma'\setminus 0$ has two connected components $\sigma_1$,
$\sigma_2$.

$(ii)$ The closures of the three connected components of $S\setminus\Sigma'$
are submanifolds with boundaries and corners.
\end{lem}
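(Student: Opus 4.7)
The plan mirrors Lemmas~\ref{Lem:2.6.1'} and~\ref{Lem:2.6.3'}: exhibit a real hyperplane that meets $\Sigma'$ only at the origin, then exploit the special $1$-hyperbolic tangent cone to split and parametrise the pieces. For (i) I would intersect $\Sigma'$ with $\{y_1=0\}$ inside $\{x_3=0\}$; the defining equation collapses to
$$x_1^4+x_2^4+y_2^4+4x_1^2+x_2^2+y_2^2=0,$$
a sum of non-negative monomials, which forces $(x_1,x_2,y_2)=0$. Hence $\Sigma'\cap\{y_1=0\}=\{0\}$ in a neighbourhood of $h$, and $\Sigma=\Sigma'\setminus\{0\}$ separates as $\sigma_1\sqcup\sigma_2$ with $\sigma_j\subset\{(-1)^{j+1}y_1>0\}$. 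For connectedness, write the full equation as $y_1^4-2y_1^2+C=0$ with $C:=x_1^4+x_2^4+y_2^4+4x_1^2+x_2^2+y_2^2\ge 0$, and select the branch $y_1^2=1-\sqrt{1-C}$ (the unique root tending to $0$ with $C$); each $\sigma_j$ is then the smooth graph of $y_1=(-1)^{j+1}\sqrt{1-\sqrt{1-C}}$ over the punctured ball $\{0<C<1\}\subset\R^3_{(x_1,x_2,y_2)}$, which is connected.

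For (ii) I would identify the three components of $S\setminus\Sigma'$ as
$$C_0=S\cap\{x_3>0\},\qquad C_j=S\cap\{x_3<0\}\cap\{(-1)^{j+1}y_1>0\}\ \ (j=1,2).$$
Connectedness of $C_1,C_2$ reduces to the same $\{y_1=0\}$ argument applied to $\{g<0\}$, with $g$ the right-hand side of the defining equation of $S$ for $x_3\le 0$; connectedness of $C_0$ follows by tracking the slice $S\cap\{x_3=t\}$ continuously from $t=1^-$ (a small deformed sphere around the elliptic point $e_3$) down to $t\to 0^+$ (a one-sheeted hyperboloid by the local model at $h$), each slice being connected. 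The elliptic points $e_1,e_2,e_3$ lie in the interiors of $C_1,C_2,C_0$ respectively and contribute no singularity. Away from $h$, each $\overline{C_j}$ is a $4$-submanifold with smooth boundary ($\overline{\sigma_1}\cup\overline{\sigma_2}$ for $j=0$, and $\overline{\sigma_j}$ for $j=1,2$) by the implicit function theorem. At $h$ the local model $x_3=4x_1^2-2y_1^2+x_2^2+y_2^2+O(|z|^3)$ realises $\overline{C_0}$ as $\{|y_1|\le v+O(|z|^2)\}$ with $v=\sqrt{2x_1^2+\tfrac12 x_2^2+\tfrac12 y_2^2}$, a dihedral region whose two faces $\overline{\sigma_1},\overline{\sigma_2}$ meet transversally off $h$ and form a codimension-$2$ corner at $h$; each $\overline{C_j}$ ($j=1,2$) is a half-neighbourhood of the single face $\overline{\sigma_j}$, also with a corner at $h$.

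The main obstacle will be to render the manifold-with-corners structure at $h$ rigorous, since the faces of $\overline{C_0}$ are implicit in the square-root expression for $v$, which is only Lipschitz at the origin. A clean remedy is to pass to coordinates $a=y_1-v$, $b=y_1+v$: the defining function factors as $g=-2ab+O(|z|^3)$, so that near $h$ one has $\overline{C_1}=\{a\ge 0,\ b\ge 0\}$, $\overline{C_2}=\{a\le 0,\ b\le 0\}$, and $\overline{C_0}=\{ab\le 0\}$, giving the standard local model of a manifold with corners. The smoothness of this coordinate change away from $h$ is obtained by applying the implicit function theorem to each irreducible factor of the quadratic part of $g$; at $h$ itself one invokes a single blow-up of $S$ to separate the two faces, after which the remaining verifications are routine transcriptions of the arguments in Section~\ref{Sec:2.6}.
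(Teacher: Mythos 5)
Your treatment of $(i)$ follows the paper's own route: the decisive step in both is that setting $y_1=0$ in the defining equation of $\Sigma'$ leaves a sum of non-negative monomials, so $\Sigma'\cap\{y_1=0\}=\{0\}$ and $\Sigma$ splits along the sign of $y_1$; your explicit branch $y_1^2=1-\sqrt{1-C}$ makes the connectedness of each piece more concrete than the paper's slicing by the $3$-spaces $\{y_2=\lambda x_2\}$. One caveat: that graph over $\{0<C<1\}$ captures only the part of $\sigma_j$ near $h$; globally $\sigma_j$ also contains the outer branch $y_1^2=1+\sqrt{1-C}$ and the locus $C=1$ where the two branches merge (this is what makes $\overline{\sigma_j}$ a $3$-sphere, as asserted right after the lemma and used in Proposition \ref{Prop:3.4.1}$(iv)$), so if the statement is meant globally your parametrisation is incomplete, though connectedness still holds. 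Your identification of the three components of $S\setminus\Sigma'$ in $(ii)$ agrees with the paper.

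The genuine problem is your description of the singularity of the boundaries at $h$. The two faces $\overline{\sigma_1}=\{y_1=v\}$ and $\overline{\sigma_2}=\{y_1=-v\}$ intersect \emph{only} at $h$ (two half-cones joined at the vertex), so there is no codimension-$2$ stratum along which they ``meet transversally off $h$'': the set $\{a=b=0\}=\{y_1=0,\,v=0\}$ is the single point $h$, of codimension $4$ in $S$. Consequently $(a,b)=(y_1-v,\,y_1+v)$ is not a pair of independent smooth defining functions near $h$ ($v$ is only Lipschitz there, as you concede), and $\{a\ge0,\ b\ge0\}=\{y_1\ge v\}$ is not the standard corner model but the region above a cone. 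The correct local picture is a boundary with an isolated conical singularity at $h$ --- which is exactly what the paper means by ``(singular) boundary'' in Proposition \ref{Prop:3.4.1}$(v)$, and what its own proof extracts from the tangent-cone computation $4x_1^2-2y_1^2+x_2^2+y_2^2=0$ in part $(i)$. The appeal to ``a single blow-up'' is too vague to repair this: as written, your last paragraph asserts a manifold-with-corners structure in the usual sense that the geometry does not have. The argument should instead conclude by exhibiting the cone structure of $\overline{\sigma_1}\cup\overline{\sigma_2}$ at $h$, which your own expansion $y_1=\pm v+O(|z|^2)$ already provides.
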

\begin{proof} $(i)$ The only singular point of $\Sigma'$ is 0. We
work in the ball $B(0,A)$ of $\C^2$ $(x_1, y_1,x_2, y_2)$ for small $A$ and
in the 3-space $\pi_\lambda = \{y_2=\lambda x_2\}$, $\lambda\in \R$. For
$\lambda$ fixed, $\pi_\lambda\cong \R^3(x_1, y_1,x_2)$, and
$\Sigma'\cap\pi_\lambda$ is the cone of equation
$4x_1^2-2y_1^2+(1+\lambda^2)x_2^2+O(|z|^3)=0$ with vertex 0 and basis in the plane
$x_2=x_2^0$ the hyperboloid
$H_\lambda$ of equation $4x_1^2-2y_1^2+(1+\lambda^2)x_2^{0 2}+O(|z|^3)=0$; the curves
$H_\lambda$ have no common point outside 0. So, when $\lambda$ varies, the surfaces
$\Sigma'\cap\pi_\lambda$ are disjoint outside 0. The set $\Sigma'$ is
clearly connected; $\Sigma'\cap\{y_1=0\}=\{0\}$, the origin of $\C^3$;
from above: $\sigma_1= \Sigma\cap\{y_1>0\}$; $\sigma_2=
\Sigma\cap\{y_1<0\}$.

$(ii)$ The three connected components of $S\setminus\Sigma'$ are the
components which contain, respectively $e_1$, $e_2$, $e_3$ and whose
boundaries are $\overline \sigma_1$, $\overline \sigma_2$, $\overline
\sigma_1\cup \overline \sigma_2$; these boundaries have corners as shown
in the first part of the proof.
\end{proof}

The connected component of $\C^2\times \R\setminus S$
containing the point $(0,0,0,0,1/2) $ is the Levi-flat solution, the complex
leaves being the sections by the hyperplanes $x_3=x_3^0$, $-1<x_3^0<1$.

The sections by the hyperplanes $x_3=x_3^0$ are diffeomorphic to a 3-sphere for
$0<x_3^0<1$ and to the union of two disjoint 3-spheres for $-1<x_3^0<0$, as can be
shown intersecting $S$ by lines through the origin in the
hyperplane $x_3=x_3^0$;
$\Sigma'$ is homeomorphic to the union of two 3-spheres with a common point.

The connected component of $\C^2\times \R\setminus S$
containing the point $(0,0,0,0,1/2) $ is the Levi-flat solution, the complex
leaves being the sections by the hyperplanes $x_3=x_3^0$, $-1<x_3^0<1$.

The sections by the hyperplanes $x_3=x_3^0$ are diffeomorphic to a 3-sphere for
$0<x_3^0<1$ and to the union of two disjoint 3-spheres for $-1<x_3^0<0$, as can be
shown intersecting $S$ by lines through the origin in the
hyperplane $x_3=x_3^0$;
$\Sigma'$ is homeomorphic to the union of two 3-spheres with a common point.

\subsection{Sphere with one special 1-hyperbolic point (sphere with two
horns)}\label{Sec:3.4}

The example of section \ref{Sec:3.3} shows that the necessary conditions of section 2 can be realised. Moreover, from Proposition \ref{Sec:2.8.7}, the hypothesis on the number of complex points is meaningful.

\subsubsection{}\label{Sec:3.4.1}
\begin{prop}\label{Prop:3.4.1}[cf \cite{D08}[Proposition 2.6.1]] Let
$S\subset \C^n$ be a compact connected real 2-codimensional manifold such that
the following holds:

$(i)$ $S$ is a topological sphere; $S$ is nonminimal at every CR point;

$(ii)$ every complex point of $S$ is flat; there exist three special elliptic
points $e_j, j=1,2,3$
and one special $1$-hyperbolic point $h$;

$(iii)$ $S$ does not contain complex manifolds of dimension
$(n-2)$;

$(iv)$ the singular {\rm CR} orbit $\Sigma'$ through $h$ on $S$ is compact and
$\Sigma'\setminus \{h\}$ has two connected components $\sigma_1$ and
$\sigma_2$ whose closures are homeomorphic to spheres of dimension $2n-3$;

$(v)$ the closures $S_1, S_2, S_3$ of the three connected components $S'_1,
S'_2, S'_3$ of
$S\setminus\Sigma'$ are submanifolds with (singular) boundary.

Then each $S_j\setminus \{e_j\cup\Sigma'\}$, $j=1,2,3$
carries a foliation ${\mathcal F}_j$ of class
$C^\infty$ with $1$-codimensional {\rm CR} orbits as compact leaves.
\end{prop}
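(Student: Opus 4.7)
The plan is to assemble the foliation on each closed piece $S_j$ from three local pieces of information already available: the foliation by spheres near the elliptic point $e_j$ (Proposition~\ref{Prop:2.3.1}), the foliation in a neighborhood of the singular CR orbit $\Sigma'$ (Proposition~\ref{Prop:2.7.2}), and the standard fact that on a CR, nowhere minimal manifold the CR orbits form a smooth codimension-$1$ foliation of the set of CR points (recalled in Section~\ref{Sec:2.1}). Since $S_j\setminus\{e_j\}\setminus\Sigma'$ consists only of CR, nowhere minimal points (hypotheses (i) and (ii) forbid any other singular CR points), we already get a smooth codimension-$1$ foliation $\mathcal{F}_j^{\mathrm{reg}}$ by CR orbits on this open set.

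First, I would verify that $\mathcal{F}_j^{\mathrm{reg}}$ extends smoothly up to both ends of $S_j$. Near $e_j$, Proposition~\ref{Prop:2.3.1} gives a neighborhood $V_j$ of $e_j$ foliated by compact CR orbits diffeomorphic to $\mathbf{S}^{2n-3}$ together with a defining smooth function $\nu_j$. The uniqueness of CR orbits (Sussmann) forces $\mathcal{F}_j^{\mathrm{reg}}\vert_{V_j\setminus\{e_j\}}$ to coincide with the restriction of the foliation constructed in Proposition~\ref{Prop:2.3.1}, so the leaves near $e_j$ are the level sets of $\nu_j$. Near $\Sigma'$, Proposition~\ref{Prop:2.7.2} provides a neighborhood $W$ of $\Sigma'$ foliated by compact CR orbits together with a smooth function $\nu$ having the orbits as level sets; hypothesis (iv) on the shape of $\Sigma'$ is exactly what is needed to invoke that proposition, and hypothesis (v) ensures that on the $S_j$-side the local picture is the one described there (a single sphere above $\Sigma'$ for one component, a single sphere below for each of the other two). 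Again by uniqueness of CR orbits, this foliation glues with $\mathcal{F}_j^{\mathrm{reg}}$.

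Next I would promote $\mathcal{F}_j^{\mathrm{reg}}$ to a \emph{globally defined} smooth foliation of $S_j\setminus(\{e_j\}\cup\Sigma')$ by a connectedness argument, and simultaneously show the leaves are compact. Take the smooth defining function $\nu_j$ from Proposition~\ref{Prop:2.3.1} near $e_j$, and extend it along leaves of $\mathcal{F}_j^{\mathrm{reg}}$ using a transversal curve $\gamma$ in $S_j$ joining $e_j$ to a point of $\Sigma'$; this is exactly the extension trick used in the proof of Proposition~\ref{Prop:2.7.2}. Because $S_j$ has no other complex points and no other singular CR orbits (by (ii), (iii) and (v)), the transversal $\gamma$ meets every CR orbit of $S_j\setminus(\{e_j\}\cup\Sigma')$ in a discrete set; compactness of $S_j$ together with the smoothness of the foliation away from $\{e_j\}\cup\Sigma'$ then forces each leaf to be compact, and in fact forces $\gamma$ to meet each leaf exactly once (otherwise two level sets of $\nu_j$ would collide, contradicting Sussmann uniqueness). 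This simultaneously produces the global smooth defining function $\nu_j$ on $S_j\setminus(\{e_j\}\cup\Sigma')$ extending continuously to $e_j$ and $\Sigma'$, and shows its level sets are the desired compact leaves.

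The main obstacle I expect is the global step: ensuring that the transversal arc $\gamma$ genuinely meets \emph{every} CR orbit exactly once, so that the locally defined leaf-function $\nu_j$ extends to a single-valued smooth function on all of $S_j\setminus(\{e_j\}\cup\Sigma')$ and the orbits stay compact. The potential failure modes are that a leaf could spiral, fail to close up, or accumulate on a locus not contained in $\{e_j\}\cup\Sigma'$. Hypotheses (ii), (iii), (v) are designed to rule these out: condition (iii) prevents the existence of other complex leaves which could be candidate accumulation sets, (v) guarantees that $S_j$ is a compact manifold with boundary $\overline\sigma_k$ (possibly with corners) whose only non-CR or degenerate points are $e_j$ and $\Sigma'$, and Propositions~\ref{Prop:2.3.1} and~\ref{Prop:2.7.2} describe the leaves in neighborhoods of those two strata as compact spheres. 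Combining these, a standard holonomy/monodromy argument along $\gamma$ (as in Proposition~\ref{Prop:2.7.2}) closes the gap and yields the announced smooth foliation $\mathcal{F}_j$ of $S_j\setminus(\{e_j\}\cup\Sigma')$ by compact codimension-$1$ CR orbits.
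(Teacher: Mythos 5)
Your local assembly is exactly the paper's: Proposition~\ref{Prop:2.3.1} near $e_j$, Proposition~\ref{Prop:2.7.2} near $\Sigma'$, nowhere-minimality giving the codimension-$1$ foliation by CR orbits on the CR locus, and hypothesis (iii) to keep all orbits of dimension $2n-3$. The gap is in your global step. You assert that ``compactness of $S_j$ together with the smoothness of the foliation away from $\{e_j\}\cup\Sigma'$ forces each leaf to be compact'' and that the transversal $\gamma$ must meet each leaf exactly once, ``otherwise two level sets of $\nu_j$ would collide, contradicting Sussmann uniqueness.'' Neither implication holds: a smooth codimension-$1$ foliation of a compact manifold (with boundary) can perfectly well have non-compact leaves that spiral onto a compact one and meet a transversal infinitely often without any two distinct leaves colliding --- the Reeb foliation is the standard counterexample. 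Sussmann's theorem gives you uniqueness of each orbit as an integral manifold; it says nothing about an orbit returning to $\gamma$ many times. You correctly name this as the main obstacle, but the ``standard holonomy/monodromy argument'' you appeal to is precisely the missing content, not a routine closing move.

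The paper closes this gap with a specific tool you do not invoke: Thurston's Stability Theorem (Proposition~\ref{Prop:3.4.2}). After removing small saturated neighborhoods of $e_j$ and of $\Sigma'$, one gets a compact foliated manifold with boundary $S_j''$ that possesses a compact boundary leaf $L\cong {\bf S}^{2n-3}$ with $H^1(L,\R)=0$ (this is where $n\ge 3$ enters: for $n=2$ the leaf would be ${\bf S}^1$ and the theorem would not apply). Transverse orientability plus this cohomological vanishing force $S_j''$ to be homeomorphic to $L\times[0,1]$ foliated as a product, which is what actually delivers compactness of every leaf and the single-valued global defining function. To repair your proof you should replace the transversal/monodromy paragraph by an appeal to Reeb--Thurston stability applied to $S_j''$, checking transverse orientability and $H^1({\bf S}^{2n-3},\R)=0$ as the paper does.
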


\begin{proof}
From conditions (i) and (ii), $S$ satisfying
the hypotheses of Proposition \ref{Prop:2.3.1}, near any elliptic
flat point $e_j$, and of Proposition \ref{Prop:2.7.2} near $\Sigma'$, all CR orbits being
diffeomorphic to the sphere ${\bf S}^{2n-3}$.
The assumption (iii) guarantees that
all CR orbits in $S$ must be of real dimension $2n-3$.
Hence, by removing small connected open saturated neighborhoods
of all special elliptic points, and of $\Sigma'$,
we obtain, from $S\setminus\Sigma'$, three compact manifolds $S_j"$,
$j=1,2,3$,
with boundary and
with the foliation ${\mathcal F}_j$ of codimension $1$ given by its CR
orbits whose first
cohomology group with values in ${\bf R}$ is 0, near $e_j$.
It is easy to show that this foliation is transversely oriented.
\end{proof}

\subsubsection{}\label{Sec:3.4.2}
Recall the Thurston's Stability Theorem
($\lbrack$ CaC$\rbrack$, Theorem 6.2.1).
\begin{prop}\label{Prop:3.4.2} Let $(M,{\mathcal F})$ be a compact, connected, transversely-orientable,
foliated manifold with boundary or corners, of codimension 1, of class
$C^1$. If there is a compact leaf $L$ with $H^1(L,{\bf R})=0$, then every leaf
is homeomorphic to $L$ and $M$ is homeomorphic to $L\times \lbrack
0,1\rbrack$, foliated as a product,
\end{prop}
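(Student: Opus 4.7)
The plan has three stages: (a) show the holonomy of $L$ is trivial, (b) apply a Reeb-type local stability to produce a saturated product neighborhood, and (c) propagate globally using connectedness.

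For (a), fix $p\in L$ and a transversal arc $T$ through $p$; transverse orientability in codimension one yields a holonomy homomorphism
\[
h:\pi_1(L,p)\to \mathrm{Diff}^1_+(T,0).
\]
I would first extract the linear part: the derivative at $0$ gives a homomorphism $\pi_1(L)\to\R_{>0}$ which, composed with $\log$, produces a class in $H^1(L;\R)$; by hypothesis this class must vanish, so $h(\gamma)'(0)=1$ for all $\gamma$. Then comes Thurston's nonlinear rescaling: assuming $h$ still nontrivial, fix generators $\gamma_i$, put $f_i=h(\gamma_i)$, pick a sequence $x_n\to 0$ nearly maximizing $\max_i|f_i(x_n)-x_n|/|x_n|$, and rescale via $g_{i,n}(t)=x_n^{-1}f_i(x_n t)$. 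Using $C^1$ continuity and $f_i'(0)=1$, a subsequence of the $g_{i,n}$ converges uniformly on compact sets to translations $t\mapsto t+a_i$; the assignment $\gamma_i\mapsto a_i$ extends to a nontrivial homomorphism $\pi_1(L)\to\R$, contradicting $H^1(L;\R)=0$. Hence $h\equiv\mathrm{id}$.

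For (b), triviality of the holonomy together with compactness of $L$ yields, by Reeb's standard argument (cover $L$ by finitely many foliated charts, glue their plaques along the trivialized holonomy), a saturated tubular neighborhood $U\cong L\times(-\varepsilon,\varepsilon)$ on which $\mathcal F$ is the product foliation $\{L\times\{t\}\}$. For (c), let $\mathcal A\subset M$ be the set of points whose leaf is compact, homeomorphic to $L$, and admits such a product neighborhood. Step (b) gives openness of $\mathcal A$. For closedness, if $q_n\to q_\infty$ with $q_n\in\mathcal A$, a plaque-chain argument confines a subsequence of the leaves $L_{q_n}$ to a compact region; the limit leaf $L_{q_\infty}$ is then compact with $H^1=0$, so step (a) applies to it and gives $q_\infty\in\mathcal A$. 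Connectedness of $M$ forces $\mathcal A=M$, and any transversal arc meeting the two boundary (or corner) components of $M$ parametrizes the leaf space as $[0,1]$, delivering $M\cong L\times[0,1]$ foliated as a product.

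The main obstacle is clearly stage (a): the rescaling/renormalization argument. This is where the $C^1$ hypothesis is indispensable --- both to make the derivative at $0$ meaningful and to obtain $C^0$-compactness of the rescaled $g_{i,n}$ --- and it is the only place where the cohomological assumption $H^1(L;\R)=0$ is used. The remaining ingredients (Reeb stability, open-and-closed) are routine once trivial holonomy is in hand.
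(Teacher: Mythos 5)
The paper does not prove this proposition at all: it is quoted as Thurston's Stability Theorem with a citation to Candel--Conlon, Theorem 6.2.1, and is then simply applied to the pieces $S_j''$. So there is no in-paper argument to compare with; what you have written is an outline of the standard published proof, and your three-stage decomposition (kill the holonomy using $H^1(L;\R)=\mathrm{Hom}(\pi_1(L),\R)=0$, then Reeb local stability for a leaf with trivial holonomy, then an open-and-closed propagation) is the right strategy and correctly identifies the $C^1$ renormalization as the heart of the matter.

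That said, the key step as you have written it fails. Since $f_i(0)=0$ and $f_i'(0)=1$, you have $f_i(x)=x+o(x)$, so your rescaled maps $g_{i,n}(t)=x_n^{-1}f_i(x_nt)$ satisfy $g_{i,n}(t)-t=x_n^{-1}\int_0^{x_nt}(f_i'(s)-1)\,ds\to 0$ uniformly on compacta: every $a_i$ equals $0$ and no contradiction with $H^1(L;\R)=0$ is produced. Likewise $\max_i|f_i(x_n)-x_n|/|x_n|\to 0$, so ``nearly maximizing'' that ratio selects nothing. Thurston's normalization divides by the maximal displacement $N(x_n)=\max_j|f_j(x_n)-x_n|$ rather than by $x_n$: one sets $\psi_n(g)=\bigl(g(x_n)-x_n\bigr)/N(x_n)$, so that $\max_i|\psi_n(f_i)|=1$, extracts $a_i=\lim\psi_n(f_i)$ with some $|a_i|=1$, and then --- this is the crux your plan leaves unaddressed --- proves the asymptotic additivity $\psi_n(gh)=\psi_n(g)+\psi_n(h)+o(1)$ (using the $C^1$ hypothesis, $f'(0)=1$, and an induction on word length to keep $|h(x_n)-x_n|=O(N(x_n))$), so that the relators of a finite presentation of $\pi_1(L)$ are sent to $0$ and $\gamma_i\mapsto a_i$ is a genuine nonzero homomorphism to $\R$. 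Finally, in stage (c) the closedness of $\mathcal A$ is asserted rather than proved: compactness of the limit leaf $L_{q_\infty}$ is exactly the delicate point of global Reeb stability and requires exploiting the product neighborhoods of the approximating leaves (or a leaf-volume argument), not merely a plaque chain. None of this is unfixable --- it is all in the cited source --- but as written the renormalization step does not deliver the contradiction it claims.
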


Then, from the above theorem, $S_j"$ is homeomorphic to
${\bf S}^{2n-3}\times [0,1]$ with CR orbits being of the form
${\bf S}^{2n-3}\times\{x\}$ for $x\in [0,1]$. Then the full manifold $S_j$ is
homeomorphic to a half-sphere supported by ${\bf S}^{2n-2}$ and
${\mathcal F}_j$ extends to $S_j$; $S_3$ having its boundary pinched at the point $h$.

\hskip 5cm $\square$\smallskip

\subsubsection{}\label{Sec:3.4.3}
\begin{thm}\label{Thm:3.4.3} Let $S\subset \C^n$, $n\geq 3$, be a
compact connected smooth real $2$-codimensional submanifold
satisfying the conditions $(i)$ to $(v)$ of Proposition \ref{Prop:3.4.2}.
Then there exists a Levi-flat $(2n-1)$-subvariety $\tilde M\subset\C\times\C^n$
with boundary $\tilde S$
(in the sense of currents)
such that the natural projection $\pi: \C\times\C^n\to \C^n$
restricts to a bijection which is a CR diffeomorphism between $\tilde S$ and
$S$ outside the complex points of $S$.
\end{thm}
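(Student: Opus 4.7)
The plan is to use the CR-orbit foliation of $S$ (from Propositions \ref{Prop:2.3.1}, \ref{Prop:2.7.2}, \ref{Prop:3.4.1} combined with Thurston stability, Proposition \ref{Prop:3.4.2}) to lift $S$ into $\C\times\C^n$ so that its CR orbits become precisely the slices by real hyperplanes of a first real coordinate, and then to apply the parametric Harvey--Lawson theorem (Theorem \ref{Thm:3.1.2} together with Remark \ref{Rem:3.1.3}) to fill these slices leaf by leaf with holomorphic hypersurfaces.

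The first step is to produce a continuous global parametrization $\nu:S\to[-1,1]$ of the foliation. On $S_3$, Propositions \ref{Prop:3.4.1} and \ref{Prop:3.4.2} give a smooth function whose level sets are the CR orbits, ranging from $0$ on $\partial S_3=\Sigma'$ to $1$ at $e_3$; the analogous functions on $S_1$ and $S_2$ are normalized so as to range from $-1$ at $e_j$ to $0$ on $\overline{\sigma}_j$. Proposition \ref{Prop:2.7.2} then ensures that these three smooth pieces glue continuously across $\Sigma'\setminus\{h\}$ into a function $\nu$ which is smooth on $S\setminus\{e_1,e_2,e_3,h\}$. By construction $d\nu$ vanishes identically on the complex tangent bundle $HS$ since $\nu$ is constant along every CR orbit.

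The second step is to set $\tilde S := \{(\nu(p),p)\in\C\times\C^n : p\in S\}$, reading $\nu(p)\in\R\subset\C$. Since $d\nu|_{HS}=0$, the complex tangent space of $\tilde S$ at any smooth point $(\nu(p),p)$ equals $\{0\}\times H_pS$, so the projection $\pi|_{\tilde S}$ is a bijection onto $S$ and a CR diffeomorphism away from the complex points; moreover $\tilde S$ is a smooth, compact, oriented $(2n-2)$-dimensional CR subvariety of $\C\times\C^n$, contained in the real hyperplane $\R\times\C^n$, with negligible singularities at the four lifted complex points, hence $d$-closed as a current. With $k:\C\times\C^n\to\R$ the real part of the first coordinate, the slice $k^{-1}(x_1^0)\cap\tilde S$ for $x_1^0\in(-1,1)\setminus\{0\}$ is the lift of the CR orbit $\nu^{-1}(x_1^0)\subset S$, a maximally complex compact submanifold diffeomorphic to $\mathbf{S}^{2n-3}$ when $x_1^0>0$ and to a disjoint union of two such spheres when $x_1^0<0$. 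Hypothesis (H) of Theorem \ref{Thm:3.1.2} therefore holds with $L=\{-1,0,1\}$, and condition (iii) of Proposition \ref{Prop:3.4.1} prevents any parasitic complex subvariety in a slice. Theorem \ref{Thm:3.1.2} together with Remark \ref{Rem:3.1.3} then yields a smooth Levi-flat $(2n-1)$-subvariety $\tilde M$ in $(\C\times\C^n)\setminus k^{-1}(L)$ whose leaves over each regular $x_1^0$ are the Harvey--Lawson holomorphic fillings of the corresponding boundary sphere(s), and $\tilde M$ extends simply through $k^{-1}(L)$ as a current satisfying $d\tilde M=\tilde S$ on all of $\C\times\C^n$.

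The main obstacle is controlling the foliation across the critical slice $x_1^0=0$, where the boundary undergoes a topology change: the $\mathbf{S}^{2n-3}$ orbits above $\Sigma'$ pinch down to a wedge of two spheres at the lifted hyperbolic point, while the two $\mathbf{S}^{2n-3}$ orbits below coalesce into a single sphere above. The geometric control provided by Proposition \ref{Prop:2.7.2}, namely the existence of a smooth level function for the foliation on a full neighborhood of $\Sigma'$, is what legitimates the current extension of $\tilde M$ through $x_1^0=0$; the elliptic-point critical slices $x_1^0=\pm1$ are disposed of by the already established analysis near elliptic points in \cite{DTZ05,DTZ10}. Combining these three extensions through $L$ gives $d\tilde M=\tilde S$ in all of $\C\times\C^n$, completing the construction.
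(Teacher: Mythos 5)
Your proposal follows essentially the same route as the paper: patch the local level functions near the elliptic points and near $\Sigma'$ (Propositions \ref{Prop:2.3.1} and \ref{Prop:2.7.2}) with the global ones supplied by Thurston stability (Propositions \ref{Prop:3.4.1}, \ref{Prop:3.4.2}) into a single function $\nu$, set $\tilde S=\mathrm{gr}\,\nu$, verify hypothesis (H), and invoke Theorem \ref{Thm:3.1.2} before projecting by $\pi$. The only point worth noting is that your (correct) observation that the slices below $\Sigma'$ are disjoint unions of two spheres sits uneasily with the connectedness requirement in (H) as literally stated --- but the paper's own proof has exactly the same feature, implicitly resolved by applying the Harvey--Lawson filling to each component of the slice separately.
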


\begin{proof}
By Proposition \ref{Prop:2.3.1}, for every $e_j$, a
continuous function
$\nu'_j$, $C^{\infty}$ outside $e_j$, can be constructed in a
neighborhood $U_j$ of $e_j$, $j=1,2,3$, and by Proposition \ref{Prop:2.7.2}, we have an
analogous result in a neighborhood of $\Sigma'$.
Furthermore, from Proposition \ref{Prop:3.4.2}, a smooth function $\nu"_j$ whose level
sets are the leaves of ${\mathcal F}_j$ can be obtained
globally on
$S'_j\setminus \{e_j\cup\Sigma'\}$. With the functions $\nu'_j$ and $\nu"_j$, and
analogous functions near $\Sigma'$, then using a partition of unity, we obtain a
global smooth function
$\nu_j\colon S_j\to {\bf R}$ without critical points away from the complex points
$e_j$ and from $\Sigma'$.

Let $\sigma_1$, resp. $\sigma_2$ be the two connected, relatively compact
components of $\Sigma\setminus \{h\}$, according to condition $(iv)$;
$\overline\sigma_1$, resp. $\overline\sigma_2$ are the boundary of $S_1$,
resp. $S_2$, and $\overline \sigma_1\cup\overline\sigma_2$ the boundary of
$S_3$. We can assume that the three functions $\nu_j$ are finite valued and get the same values on $\overline\sigma_1$ and $\overline\sigma_2$.
Hence a function $\nu: S\rightarrow{\bf R}$.

The submanifold $S$ being, locally, a boundary of a Levi-flat hypersurface, is
orientable. We now set $\tilde S=N={\rm gr}\,\nu = \{(\nu(z),z) : z\in S\}$.
Let $S_s=\{e_1,e_2, e_3,\overline{\sigma_1\cup\sigma_2}\}$.

$\lambda: S\rightarrow\tilde S\hskip 2mm\big( z\mapsto \nu ((z),z)\big)$ is
bicontinuous; $\lambda\vert_{S\setminus S_s}$ is a diffeomorphism; moreover
$\lambda$ is a CR map. Choose an orientation on $S$. Then $N$ is an (oriented)
CR subvariety with the negligible set of singularities $\tau=\lambda(S_s)$.

At every point of $S\setminus S_s$, $d_{x_1} \nu\not = 0$, then
condition (H) (section \ref{Sec:3.1.1}) is satisfied at every point of
$N\setminus\tau$.

Then all the assumptions of Theorem \ref{Thm:3.1.2} being satisfied by
$N=\tilde S$, in a particular case, we conclude that $N$ is the
boundary of a Levi-flat
$(2n-2)$-variety (with negligible singularities) $\tilde M$ in ${\bf
R}\times\C^n$.

\smallskip

Taking $\pi: \C\times \C^n\to \C^n$ to be the standard projection,
we obtain the conclusion.
\end{proof}

\subsection{Generalizations: elementary models and their gluing}\label{Sec:3.5}

\subsubsection{}\label{Sec:3.5.1}
The examples and the proofs of the theorems
when $S$ is homeomorphic to a sphere (sections \ref{Sec:3.4})
suggest the following definitions.

\subsubsection{Definitions}\label{Sec:3.5.2}

Let $T'$ be a smooth, locally closed (i.e. closed in an open set), connected submanifold of $\C^n$, $n \geq 3$. We assume
that
$T'$ has the following properties:

$(i)$ $T'$ is relatively compact, non necessarily compact, and of codimension 2.

$(ii)$ $T'$ is nonminimal at every CR point.

$(iii)$ $T'$ does not contain complex manifold of dimension $(n-2)$.

$(iv)$ $T'$ has exactly 2 complex points which are flat and either special elliptic or special 1-hyperbolic.

$(v)$ If $p\in T'$ is special 1-hyperbolic, the singular orbit $\Sigma'$
through
$p$ is compact, $\Sigma'\setminus p$ has two connected components
$\sigma_1$,
$\sigma_2$, whose closures are homeomorphic to spheres of dimension $2n-3$.

$(vi)$ If $p\in T'$ is special 1-hyperbolic, in the neighborhood of
$p$, with convenient coordinates, the equation of $T'$, up to third order terms
is
$$z_n=\sum_{j=1}^{n-1} (z_j\overline z_j+\lambda_j {\mathcal R}e \ z_j^2); \
\lambda_1>1; \ 0\leq\lambda_j<1 \ \ {\rm for} \ \ j\not =1 $$

or in real coordinates $x_j,y_j$
with $z_j=x_j+iy_j$, \quad
$$x_n=\big((\lambda_1+1)x_1^2-(\lambda_1-1)y_1^2\big)+\sum_{j=2}^{n-1}
\big((1+\lambda_j)x_j^2+(1-\lambda_j)y_j^2\big)+ O(|z|^3)$$

\smallskip

$(vii)$ the closures, in $T'$, $T_1, T_2, T_3$ of the three connected components $T'_1, T'_2, T'_3$ of $T'\setminus\Sigma'$ are submanifolds with (singular) boundary. Let $T"_j$, $j=1,2,3$ be neighborhoods of the $T'_j$ in $T'$.

\smallskip

\noindent {\it up- and down-} 1{\it -hyperbolic points}. Let $\tau$ be
the $(2n-2)$-submanifold with (singular) boundary contained into $T'$
such that either $\overline\sigma_1$ (resp. $\overline\sigma_2$) is the
boundary of $\tau$ near $p$, or $\Sigma'$ is the boundary of $\tau$ near $p$.
In the first case, we say that $p$ is 1-{\it up}, (resp. 2-{\it up}), in
the second that $p$ is {\it down}. If $T'$ is contained in a small enough neighborhood of $\Sigma'$ in $\C^n$, such a $T'$ will be called a {\it local
elementary model}, more precisely it defines a {\it germ of elementary model around $\Sigma$}.\smallskip

The union $T$ of $T_1$, $T_2$, $T_3$ and of the germ of elementary model around the singular orbit at every special 1-hyperbolic point is called an {\it elementary model}. $T$ behaves as a locally closed submanifold still denoted $T$.

\subsubsection{Examples of elementary models}\label{Sec:3.5.3}

We will say that $T$ is a {\it elementary model of type}:

$(a)$ if it has: two elliptic points;

$(b)$ if it has: one special elliptic point and one {\it down}-$\{1\}$-hyperbolic point;

$(c_1)$ if it has: one special elliptic point and one 1-{\it up}-$\{1\}$-hyperbolic point;

$(c_2)$ if it has: one special elliptic point and one 2-{\it up}-$\{1\}$-hyperbolic point;

$(d_1)$ if it has: two special 1-{\it up}-$\{1\}$-hyperbolic points;

$(d_2)$ if it has: two special 2-{\it up}-$\{1\}$-hyperbolic points;

$(e)$ if it has: two special {\it down}-$\{1\}$-hyperbolic points;

\smallskip

Other configurations are easily imagined.

The prescribed boundary of a Levi-flat hypersurface of $\C^n$ in \cite{DTZ05} and \cite{DTZ10}, whose complex points are flat and elliptic, is an elementary model of type $(a)$.

\subsubsection{Properties of elementary models}\label{Sec:3.5.4}

For instance, $T$ is 1-up and has one special elliptic point, we solve the boundary problem as in $S_1$ in the proof of Theorem \ref{Thm:3.4.3}.
\begin{prop}\label{Prop:3.5.4'} Let $T$ be a local elementary model.
Then, $T$ carries a foliation ${\mathcal F}$ of class
$C^\infty$ with $1$-codimensional {\rm CR} orbits as compact leaves.
\end{prop}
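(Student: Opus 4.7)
The plan is to mimic, piece by piece, the argument used for the sphere with two horns (Proposition~\ref{Prop:3.4.1}), but applied to each of the three pieces $T_1,T_2,T_3$ that make up an elementary model. First, at each of the two complex points $p$ of $T$, I would invoke the appropriate local foliation result: Proposition~\ref{Prop:2.3.1} if $p$ is special elliptic, which yields a neighborhood $V_p$ of $p$ in $T$ whose complement $V_p\setminus\{p\}$ is foliated by compact CR orbits diffeomorphic to ${\bf S}^{2n-3}$; and Proposition~\ref{Prop:2.7.2} if $p$ is special $1$-hyperbolic, which (thanks to hypotheses $(v)$ and $(vi)$) supplies a neighborhood $W$ of the singular orbit $\Sigma'$ such that $W\setminus\Sigma'$ is foliated by compact $(2n-3)$-dimensional CR orbits together with a smooth defining function $\nu$. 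In both cases the local leaves carry a smooth transverse coordinate.

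Second, the global structure on each component $T_j$. By hypothesis $(ii)$ every CR orbit in $T_j$ is properly contained in a hypersurface of $T_j$, and by hypothesis $(iii)$ no CR orbit can have dimension $>2n-3$; combined with the local pictures above, every CR orbit in $T_j$ has real dimension exactly $2n-3$ and the partition of $T_j$ into CR orbits defines a codimension one foliation of class $C^\infty$ away from the complex points and $\Sigma'$. Removing from $T_j$ small saturated open neighborhoods of the complex point of $T_j$ and (if applicable) of $\Sigma'$, I obtain a compact $(2n-2)$-manifold $T_j''$ with boundary, transversely-orientable (the transverse coordinate being provided by $\nu$ near each removed point), and foliated in codimension $1$ by CR orbits; the boundary components are among these leaves, and each is diffeomorphic to ${\bf S}^{2n-3}$, which for $n\geq 3$ satisfies $H^1({\bf S}^{2n-3},\R)=0$.

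Third, I apply Thurston's stability theorem (Proposition~\ref{Prop:3.4.2}) to $T_j''$: each $T_j''$ is homeomorphic to ${\bf S}^{2n-3}\times[0,1]$, foliated as a product by the CR orbits. The foliations on the $T_j''$ glue with the local foliations near the complex points and near $\Sigma'$ coming from Propositions~\ref{Prop:2.3.1} and~\ref{Prop:2.7.2}, because in a common open saturated set each point lies on a unique local CR orbit (Sussman's theorem). This yields the global $C^\infty$ foliation $\mathcal F$ of $T$ by compact $(2n-3)$-dimensional CR orbits.

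The step I expect to be delicate is the last one, namely verifying that the pieces match smoothly across the boundary ${\bf S}^{2n-3}$-leaves between a Thurston-product region and a Propositions~\ref{Prop:2.3.1}/\ref{Prop:2.7.2}-region, and that the ``singular leaf'' $\Sigma'$ fits correctly into $\mathcal F$ at a $1$-hyperbolic point. Here the different types $(b),(c_1),(c_2),(d_1),(d_2),(e)$ of models have to be handled according to whether the hyperbolic point is \emph{up} or \emph{down} in the sense of $3.5.2$: in the \emph{up} case one component of $\Sigma'\setminus\{p\}$ bounds $T_j$ (on that side the Thurston model reproduces the half of ${\bf S}^{2n-3}\times[0,1]$ whose boundary collapses onto $\overline\sigma_i$), and in the \emph{down} case the whole $\Sigma'$ bounds $T_j$ and one must check that the foliation extends across the pinched leaf $\Sigma'$, exactly as in the construction for $S_3$ in the proof of Theorem~\ref{Thm:3.4.3}. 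Once these local-to-global gluings are verified, the foliation $\mathcal F$ on $T$ is well-defined, smooth, and has compact $1$-codimensional CR orbits as its leaves.
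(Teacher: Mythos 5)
Your proposal proves much more than the statement asks, and in doing so it addresses a different object. By the definition at the end of Section~\ref{Sec:3.5.2}, a \emph{local} elementary model is a $T'$ contained in a small enough neighborhood of the singular orbit $\Sigma'$ through a special $1$-hyperbolic point, i.e.\ a germ of elementary model around $\Sigma$; it contains that single $1$-hyperbolic point and, in general, neither elliptic points nor the three components $T_1,T_2,T_3$. Accordingly the paper's proof is a one-liner: the definition plus Proposition~\ref{Prop:2.7.2}, which already furnishes a saturated neighborhood $V$ of $\Sigma'$ foliated by compact $(2n-3)$-dimensional CR orbits together with the smooth transverse function $\nu$. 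The first paragraph of your argument contains exactly this and therefore suffices.

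Everything after that --- the decomposition into $T_1'',T_2'',T_3''$, the appeal to hypothesis $(iii)$ to pin down the orbit dimension, Thurston's stability theorem, and the case analysis over the types $(b),(c_1),(c_2),(d_1),(d_2),(e)$ and the up/down dichotomy --- is the proof of the \emph{global} statement, namely Theorem~\ref{Thm:3.5.5} (and, in the horned-sphere case, Proposition~\ref{Prop:3.4.1} and Theorem~\ref{Thm:3.4.3}). That material is correct in spirit for those results, but as a proof of Proposition~\ref{Prop:3.5.4'} it rests on hypotheses (``the two complex points of $T$'', ``the three pieces $T_1,T_2,T_3$'') that a local elementary model does not satisfy. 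So there is no gap in what is actually needed, but you should separate the local claim (immediate from Proposition~\ref{Prop:2.7.2}) from the global gluing argument, which belongs to the next theorem.
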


\begin{proof} From the definition at the end of section \ref{Sec:3.5.2} and Proposition \ref{Prop:2.7.2}.\end{proof}

\subsubsection{}\label{Sec:3.5.5}
\begin{thm}\label{Thm:3.5.5} Let $T$ be the elementary model there exists an open neighborhood $T"$ in $T'$ carrying a smooth function $\nu: T"\rightarrow \R$ whose level sets are the leaves of a smooth foliation.
\end{thm}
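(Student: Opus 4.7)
The plan is to mimic, piece by piece on the components of $T$, the construction of $\nu$ given in the proof of Theorem~\ref{Thm:3.4.3}. First, at each special elliptic point $e\in T$, Proposition~\ref{Prop:2.3.1} yields a continuous function $\nu'_e$, smooth away from $e$, on a neighborhood $U_e$, whose level sets are the local CR orbits diffeomorphic to ${\bf S}^{2n-3}$. At each special $1$-hyperbolic point $p\in T$, Proposition~\ref{Prop:2.7.2} provides a smooth function $\nu'_p$ on a neighborhood $V_p$ of the singular orbit $\Sigma'_p$, with the prescribed level-set property (a sphere above $\Sigma'_p$ and the union of two spheres below).

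Next, using the decomposition of $T$ given by condition $(vii)$ of section~\ref{Sec:3.5.2}, remove from each $T_j$ small saturated neighborhoods of the special complex points and of the bounding singular orbits. By Proposition~\ref{Prop:3.5.4'} the resulting piece $T''_j$ carries a transversely-oriented $C^\infty$ codimension-$1$ foliation $\mathcal{F}_j$ whose leaves are compact CR orbits $L\cong {\bf S}^{2n-3}$, satisfying $H^1(L,\R)=0$. Thurston's Stability Theorem (Proposition~\ref{Prop:3.4.2}) then identifies each $T''_j$ diffeomorphically with ${\bf S}^{2n-3}\times[0,1]$ foliated as a product, which supplies a smooth function $\nu''_j$ on $T''_j$ without critical points whose level sets are exactly the leaves of $\mathcal{F}_j$.

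The last step is to glue. Cover an open neighborhood $T''$ of $T$ by the sets $U_e$, $V_p$ and the interiors of the domains of the $\nu''_j$. On each nonempty overlap both local functions are constant on every local CR orbit, so one can be composed with a smooth strictly-monotone change of variable on $\R$ so as to match the other on that overlap. After such rescalings, a partition of unity subordinate to the cover produces a global smooth function $\nu\colon T''\to\R$ whose level sets still coincide leaf-by-leaf with the CR orbits (a convex combination of constants equal on an orbit remains constant on that orbit) and which has no critical points away from the special complex points and the singular $1$-hyperbolic orbits; smoothness across the singular orbits themselves follows from the final paragraph of the proof of Proposition~\ref{Prop:2.7.2}.

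The main obstacle is the compatibility of the local functions at a singular orbit $\Sigma'_p$, where $\nu$ must take values prescribed both from the $V_p$-side and from the $\nu''_j$-side of the components of $T\setminus\Sigma'_p$ abutting $\Sigma'_p$. This is controlled by condition $(v)$ of section~\ref{Sec:3.5.2} together with the two-component description of $\Sigma\setminus\{p\}$: one first matches the values on each connected piece $\overline\sigma_1$, $\overline\sigma_2$ by a monotone reparametrization, and then assembles by the partition of unity. The type (down, 1-up, 2-up) of the hyperbolic point, as listed in section~\ref{Sec:3.5.3}, dictates which of $\overline\sigma_1$, $\overline\sigma_2$ or both appear as common boundary, but in each case the reparametrization is carried out on at most two independent arcs of $\R$ and the construction proceeds without obstruction.
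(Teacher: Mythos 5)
Your proposal is correct and follows essentially the same route as the paper's own proof: local functions from Propositions \ref{Prop:2.3.1}, \ref{Prop:2.7.2} and \ref{Prop:3.5.4'}, the product structure on the middle pieces via Thurston's Stability Theorem, and assembly by a partition of unity. You spell out the monotone reparametrization needed to make the local functions compatible before averaging, a step the paper leaves implicit, but this is an elaboration rather than a different argument.
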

\begin{proof}
By removing small connected open saturated neighborhoods
of every special elliptic point, and of $\Sigma'$, the singular orbit through every special 1-hyperbolic point $p$,
we obtain, from $S\setminus\Sigma'$, three compact manifolds $S_j"$,
$j=1,2,3$,
with boundary,

$(a)$ $S_1$ and $S_2$ containing one special elliptic point $e$ or one special 1-hyperbolic point with the foliations ${\mathcal F}_1$, ${\mathcal F}_2$, from Propositions \ref{Prop:2.3.1} and \ref{Prop:3.5.4'},

$(b)$ $S_3"$
with the foliation ${\mathcal F}_3$ of codimension $1$ given by its CR
orbits whose first
cohomology group with values in ${\bf R}$ is 0, near $e$, or $p$.
It is easy to show that this later foliation is transversely oriented.

\smallskip

From the Thurston's Stability Theorem (see section \ref{Sec:3.4.2}), $S_3"$ is homeomorphic to
${\bf S}^{2n-3}\times [0,1]$, foliated as a product, with CR orbits being of the form
${\bf S}^{2n-3}\times\{x\}$ for $x\in [0,1]$; hence smooth functions $\nu_1$, $\nu_2$, $\nu_3$, whose level sets are the leaves of the foliations ${\mathcal F}_1$, ${\mathcal F}_2, {\mathcal F}_3$ respectively, and using a partition of unity the desired function $\nu$ on $T$.

\end{proof}

\subsection{}\label{Sec:3.6}
\begin{thm}\label{Thm:3.6} Let $T$ be an elementary model. Then there exists a Levi-flat $(2n-1)$-subvariety $\tilde M\subset \C\times\C^n$
with boundary $\tilde T$
(in the sense of currents)
such that the natural projection $\pi: \C\times\C^n\rightarrow\C^n$
restricts to a bijection which is a CR diffeomorphism between $\tilde T$ and
$T$ outside the complex points of $T$.
\end{thm}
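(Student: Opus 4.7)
The proof strategy mirrors that of Theorem~\ref{Thm:3.4.3}: the essential device is to produce a smooth, globally defined function $\nu : T \to \R$ whose level sets are precisely the compact $(2n-3)$-dimensional CR orbits of $T$ and which has no critical points away from the special complex points and the singular orbits; then to form the graph $\tilde T = \{(\nu(z),z) : z \in T\} \subset \R \times \C^n \hookrightarrow \C \times \C^n$ and apply the Harvey--Lawson-type Theorem~\ref{Thm:3.1.2} to obtain $\tilde M$. The CR-diffeomorphism statement is automatic from this construction, since $\lambda : z \mapsto (\nu(z),z)$ is a bijection onto $\tilde T$ which is a diffeomorphism away from the complex points of $T$, with inverse equal to the restriction of $\pi$.

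Theorem~\ref{Thm:3.5.5} supplies such a function $\nu$ on an open neighborhood $T''$ of the regular locus of $T$ in $T'$. Concretely, on each of the three pieces $T''_j$ indexed as in Section~\ref{Sec:3.5.2}, one has a smooth foliation function coming either from Proposition~\ref{Prop:2.3.1} (near a special elliptic point), from Propositions~\ref{Prop:2.7.2} and~\ref{Prop:3.5.4'} (near a singular orbit $\Sigma'$ through a special $1$-hyperbolic point), or from the Thurston Stability Theorem~\ref{Prop:3.4.2} on the complementary compact manifold with boundary; a partition of unity subordinate to the covering by the $T''_j$ and the local patches around the special complex points and singular orbits then glues these pieces into the desired $\nu$. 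By rescaling each partial function transversely, one arranges the values of $\nu$ on the two components $\sigma_1$, $\sigma_2$ of any common singular orbit to match, and then extends $\nu$ continuously to every special elliptic point and to every singular orbit $\Sigma'$ by declaring it equal to the corresponding limit value. The resulting $\nu : T \to \R$ is smooth away from the set $T_s$ consisting of the special complex points together with the singular orbits, continuous on $T$, and has no critical points outside $T_s$.

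Set $\tilde T = \mathrm{gr}\,\nu = \{(\nu(z),z) : z \in T\}$. Since $T$ is locally the boundary of a Levi-flat hypersurface it is orientable; fix an orientation. Then $\tilde T$ is an oriented $(2n-2)$-dimensional CR subvariety of $\R \times \C^n \subset \C \times \C^n$ with the negligible singular set $\tau = \lambda(T_s)$, and $\lambda$ is a bicontinuous CR map which restricts to a smooth diffeomorphism from $T \setminus T_s$ onto $\tilde T \setminus \tau$. Let $k : \R \times \C^n \to \R$ be the projection onto the first real factor; by construction, on $\tilde T \setminus \tau$ we have $dk|_{\tilde T} = d_{x_1}\nu \neq 0$, and the fibres $k^{-1}(x_1^0) \cap \tilde T$ are the compact connected CR orbits of $T$ transported by $\lambda$. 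Hence hypothesis (H) of Section~\ref{Sec:3.1.1} is satisfied, with $L \subset \R$ the finite set of critical values of $\nu$ (a set with $H^1(L) = 0$). Theorem~\ref{Thm:3.1.2} then produces the Levi-flat $(2n-1)$-subvariety $\tilde M$ with $d\tilde M = \tilde T$ in the sense of currents, and the projection $\pi$ restricts to $\tilde T$ as the inverse of $\lambda$, yielding the claimed CR diffeomorphism off the complex points of $T$.

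The principal obstacle lies in the gluing step for the function $\nu$ across a singular orbit $\Sigma'$ attached to a $1$-hyperbolic point. Near such a point, Proposition~\ref{Prop:2.7.2} furnishes a function whose level sets are single spheres ${\bf S}^{2n-3}$ on one side of $\Sigma'$ and disjoint unions of two spheres on the other; the transverse rescaling and the partition-of-unity patching must be compatible with this jump in the topology of the leaves. For elementary models of types $(d_1), (d_2), (e)$ of Section~\ref{Sec:3.5.3}, which contain two $1$-hyperbolic points, this matching must moreover be carried out simultaneously at both ends. The key enabling fact is that in every case the foliation of $T$ by its CR orbits is transversely orientable and each $\Sigma'$ appears as a single critical level of the local transverse coordinate, so the compatibility condition reduces to the one-dimensional matching of two monotone real-valued functions on an interval, which can always be achieved by a smooth reparametrization.
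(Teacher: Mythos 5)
Your proposal follows essentially the same route as the paper's own proof: take the foliation function $\nu$ supplied by Theorem~\ref{Thm:3.5.5}, set $\tilde T=\mathrm{gr}\,\nu$, observe that $\lambda: z\mapsto(\nu(z),z)$ is bicontinuous, CR, and a diffeomorphism off the singular set, verify condition (H) of Section~\ref{Sec:3.1.1} via $d_{x_1}\nu\neq 0$, and invoke Theorem~\ref{Thm:3.1.2} before projecting by $\pi$. The additional discussion of matching $\nu$ across singular orbits is an elaboration of the content of Theorem~\ref{Thm:3.5.5} rather than a departure from the paper's argument.
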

\begin{proof} The submanifold $T$ being, locally, a boundary of a Levi-flat hypersurface, is
orientable. We now set $\tilde T=N={\rm gr}\,\nu = \{(\nu(z),z) : z\in S\}\subset E\cong{\bf R}\times\C^{n-1}$.
Let $T_s$ be the union of the flat complex points of $T$.

$\lambda: T\rightarrow\tilde T\hskip 2mm\big( z\mapsto \nu ((z),z)\big)$ is
bicontinuous; $\lambda\vert_{T\setminus T_s}$ is a diffeomorphism; moreover
$\lambda$ is a CR map. Choose an orientation on $T$. Then $N$ is an (oriented)
CR subvariety with the negligible set of singularities $\tau=\lambda(T_s)$.

Using Remark \ref{Rem:3.1.3}, at every point of $T\setminus T_s$, $d_{x_1} \nu\not = 0$, we see that
condition (H) (section \ref{Sec:3.1.1}) is satisfied at every point of
$N\setminus\tau$.

Then all the assumptions of Theorem \ref{Thm:3.1.2} being satisfied by
$N=\tilde T$, in a particular case, we conclude that $N$ is the
boundary of a Levi-flat
$(2n-2)$-variety (with negligible singularities) $\tilde M$ in ${\bf
R}\times\C^n$.

\smallskip

Taking $\pi: \C\times \C^n\to \C^n$ to be the standard projection,
we obtain the conclusion.
\end{proof}

\subsection{Gluing of elementary models}\label{Sec:3.7}

\subsubsection{}\label{Sec:3.7.1}
The gluing happens between two compatible
elementary models along boundaries, for instance down and 1-up. Remark that the gluing can only be made at special 1-hyperbolic points. More precisely, it can be defined as follows.

\smallskip

The assumed properties of the submanifold $S$ in section 2 in $\C^n$ have a meaning in any complex analytic manifold $X$ of complex dimension $n\geq 3$, and are kept under any holomorphic isomorphism.

We will define a submanifold $S'$ of $X$ obtained by gluying of elementary models by induction on the number $m$ of models. An elementary model $T$ in $X$ is the image of an elementary model $T_0$ in $\C^n$ by an analytic isomorphism of a neighborhood of $T_0$ in $\C^n$ into $X$.

\subsubsection{}\label{Sec:3.7.2}
Let $S'$ be a closed smooth real submanifold of $X$ of dimension $2n-2$ which is non minimal at every CR point. Assume that $S'$ is obtained by gluing of $m$ elementary models.

a) $S'$ has a finite number of flat complex points, some special elliptic and the others special 1-hyperbolic;

b) for every special 1-hyperbolic $p'$, there exists a CR-isomorphism $h$ induced by a holomorphic isomorphism of the ambient space $\C^n$ from a neighborhood of $p$ in $T'$ onto a neighborhood of $p'$ in $S'$.

c) for every CR-orbit $\Sigma_{p'}$ whose closure contains a special 1-hyperbolic point $p'$, there exists a CR-isomorphism $h$ induced by a holomorphic isomorphism of the ambient space $\C^n$ from a neighborhood of $\Sigma_p=\Sigma_p'\setminus p$ in $T'$ onto a neighborhood $V$ of $\Sigma_{p'}$ in $S'$.

Every special 1-hyperbolic point of $S'$ which belongs to only one elementary model in $S'$ will be called {\it free}.

We will define the gluing of one more elementary model to $S'$.

\subsubsection{Gluing an elementary model $T$ of type $(d_1)$ to a free down-1-hyperbolic point of $S'$}\label{Sec:3.7.3}
Let $h_1$ be a CR-isomorphism from a neighborhood $V_1$ of $\overline\sigma'_1$ induced by a holomorphic isomorphism of the ambient space $\C^n$ onto a neighborhood of $\sigma_1$ in $S'$. Let $k_1$ be a CR-isomorphism from a neighborhood $T"_1$ of $T'_1$ into $X$ such that $k_1\vert V_1=h_1$.

\subsubsection{}\label{Sec:3.7.4}
\begin{thm}\label{Thm:3.7.4}
The compact manifold or the manifold with singular boundary $S'$, obtained by the gluing of a finite number of elementary models, is the boundary of a Levi-flat hypersurface of $X$ in the sense of currents.
\end{thm}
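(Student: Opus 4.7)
The plan mirrors the proofs of Theorems~\ref{Thm:3.4.3} and~\ref{Thm:3.6}, with the essentially new input being that the foliation function $\nu$ on a single elementary model produced by Theorem~\ref{Thm:3.5.5} can be propagated across the gluings.

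\emph{Step 1 (global foliation function).} For each elementary model $T^{(i)}$ entering the construction of $S'$, Theorem~\ref{Thm:3.5.5} supplies a smooth function $\nu_i : T^{(i)} \to \R$ whose level sets are the compact CR orbits, extending continuously through the complex points. Each gluing locus (Section~\ref{Sec:3.7.3}) is, via a CR isomorphism induced by a holomorphic chart, a saturated neighborhood $V$ of a singular CR orbit $\Sigma'$ through a special $1$-hyperbolic point, where Proposition~\ref{Prop:2.7.2} determines the level-set parametrization uniquely up to composition with an increasing diffeomorphism of $\R$. On each gluing neighborhood I therefore rescale one of the two adjacent $\nu_i$ by an orientation-preserving diffeomorphism of $\R$ so that the two functions coincide on $V$ and take a common finite value on $\Sigma'$. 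A partition of unity subordinate to a cover of $S'$ by saturated open sets (the interiors of the $T^{(i)}$ together with the gluing neighborhoods, on which $\nu$ is already well defined) assembles the $\nu_i$ into a single smooth $\nu : S' \to \R$ whose level sets are CR orbits and whose only critical points are the special elliptic and the special $1$-hyperbolic points.

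\emph{Step 2 (graph and Harvey--Lawson).} Since $S'$ is locally a boundary of a Levi-flat hypersurface, it is orientable. Working chart by chart in $X \cong \C^n$, set $\tilde S' = \mathrm{gr}\,\nu = \{(\nu(z), z) : z \in S'\} \subset \C \times X$, and let $S'_s$ be the finite set of flat complex points of $S'$. Outside $S'_s$ the map $\lambda: z \mapsto (\nu(z), z)$ is a CR diffeomorphism, so $\tilde S'$ is an oriented CR subvariety of CR dimension $n-2$ with negligible singularity set $\tau = \lambda(S'_s)$. By construction $d_{x_1}\nu \neq 0$ on $S' \setminus S'_s$, so the projection $k$ of Section~\ref{Sec:3.1.1} has no critical points on $\tilde S' \setminus \tau$; taking $L = k(\tau)$ (a finite, hence $H^1$-null set) condition (H) is satisfied. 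Theorem~\ref{Thm:3.1.2} together with Remark~\ref{Rem:3.1.3} produces a Levi-flat $(2n-1)$-subvariety $\tilde M$ with negligible singularities in $\R \times X$ such that $d\tilde M = \tilde S'$ as currents. The uniqueness clause of Theorem~\ref{Thm:3.1.2} guarantees that the local solutions obtained on overlapping holomorphic charts agree, so $\tilde M$ is globally well-defined in $\C \times X$.

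\emph{Step 3 (projection).} The natural projection $\pi : \C \times X \to X$ restricts to a bijective CR diffeomorphism $\tilde S' \to S'$ away from $\tau$. Pushing forward, $M = \pi_*\tilde M$ is a Levi-flat hypersurface of $X$ (with negligible singularities) whose boundary, as a current, is $\pi_* \tilde S' = S'$, which is the desired conclusion.

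\emph{Main obstacle.} The entire weight of the argument rests on Step~1: one must match the finite values, the monotonicity, and the transverse orientation of the $\nu_i$ consistently across \emph{all} gluing loci simultaneously, without creating any new critical point on $S'$ away from the complex points. This is what ensures both that $\mathrm{gr}\,\nu$ is compact and that hypothesis (H) of Theorem~\ref{Thm:3.1.2} holds everywhere off $\tau$. Because the gluing is performed only at special $1$-hyperbolic points and each identification is induced by a holomorphic isomorphism of the ambient space, the combinatorial ``gluing graph'' of the $T^{(i)}$ is finite, and the required orientations and rescalings can be chosen greedily, one gluing at a time; the compatibility of orientations between down and up type $1$-hyperbolic points (from the definitions in Section~\ref{Sec:3.5.2}) prevents a monodromy obstruction from arising along any closed chain.
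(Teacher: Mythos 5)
Your argument is essentially the paper's: its proof of Theorem~\ref{Thm:3.7.4} is the single line ``From Theorem~\ref{Thm:3.6} and the definition of gluing,'' i.e.\ exactly the graph-of-$\nu$ plus Theorem~\ref{Thm:3.1.2} route you follow, with the compatibility of the foliation functions absorbed into the definition of gluing in Sections~\ref{Sec:3.7.1}--\ref{Sec:3.7.3}. Your Step~1 and the closing discussion of monodromy simply make explicit the matching of the $\nu_i$ across gluing loci that the paper leaves implicit.
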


\begin{proof}From Theorem \ref{Thm:3.6} and the definition of gluing.\end{proof}

\subsection{Examples of gluing}\label{Sec:3.8}

Denoting the gluing of the two models of type $(d_1)$ and $(d_2)$ to a free down-1-hyperbolic point of $S'$ by: $\rightarrow (d_1)-(d_2)$, and the converse by: $(d_1)-(d_2)\rightarrow$, and, also, analogous configurations in the same way, we get:

torus: $(b)\rightarrow (d_1)-(d_2)\rightarrow (b)$; the Euler-Poincar\'e characteristic of a torus is $\chi({\bf T}^k)=0$: 2 special elliptic and 2 special 1-hyperbolic points.

bitorus: $(b)\rightarrow (d_1)-(d_2)\rightarrow (e)\rightarrow (d_1)-(d_2)\rightarrow (b)$.

\section{Case of graphs}\label{Sec:4}

(see \cite{DTZ09} for the case of elliptic points only, and dropping the property of the function solution to be Lipschitz).

\subsection{}\label{Sec:4.1}
We want to add the following hypothesis: $S$ is embedded into the boundary of a strictly pseudoconvex domain of $\C^n$, $n\geq 3$ , and more precisely, let $(z,w)$ be the coordinates in $\C^{n-1}\times \C$, with $z=(z_1, \ldots,z_{n-1}), w=u+iv=z_n$, let $\Omega$ be a strictly pseudoconvex domain of $\C^{n-1}\times\R_u$ (i.e. the second fundamental form of the boundary $b\Omega$ of $\Omega$ is everywhere positive definite); let $S$ be the graph $gr (g)$ of a smooth function $g: b\Omega\rightarrow \R_v$. notice that $b\Omega\times\R_v$ contains $S$ and is strictly pseudoconvex.

Assume that $S$ is a {\it horned sphere} (section \ref{Sec:3.4}), {\it satisfying the hypotheses of} Theorem \ref{Thm:3.4.3}. Denote by $p_j$, $j=i,\ldots,4$ the complex points of $S$. Our aim is to prove

\subsection{}\label{Sec:4.2}
\begin{thm}\label{Thm:4.2}
Let $S$ be the graph of a smooth function $g: b\Omega\rightarrow \R_v$. Let $Q=(q_1,\ldots, q_4)\in b\Omega$ be the projections of the complex points $P=(p_1,\ldots,p_4)$ of $S$, respectively. Then, there exists a continuous function $f:\overline\Omega\rightarrow \R_v$ which is smooth on $\overline\Omega\setminus Q$ and such that $f_{\vert b\Omega}=g$, and $M_0=graph(f)\setminus S$ is a smooth Levi flat hypersurface of $\C^n$. Moreover, each complex leaf of $M_0$ is the graph of a holomorphic function $\phi:\Omega'\to \C$ where $\Omega'\subset\C^{n-1}$
is a domain with smooth boundary (that depends on the leaf) and $\phi$ is smooth on $\overline\Omega'$.
\end{thm}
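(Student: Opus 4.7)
The plan is to combine Theorem~\ref{Thm:3.4.3} (existence of a Levi-flat hypersurface with boundary $S$ as a current) with the graph constraint imposed by the fact that $S$ sits inside the strictly pseudoconvex hypersurface $b\Omega\times\R_v$. I will follow the strategy of \cite{DTZ09} for the elliptic part of $S$ and adapt it near the special $1$-hyperbolic point $h$ via the local analysis of Sections~\ref{Sec:2.6}--\ref{Sec:2.7}.

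First I would invoke Theorem~\ref{Thm:3.4.3} to obtain a Levi-flat $(2n-1)$-current $M\subset\C^n$ with boundary $S$, whose complex leaves $L$ are the holomorphic fillings of the compact CR orbits on $S$ described in Propositions~\ref{Prop:2.3.1} and~\ref{Prop:2.7.2}. Strict pseudoconvexity of $b\Omega\times\R_v$ provides a strictly plurisubharmonic defining function $\rho$, and since $bL\subset S\subset\{\rho=0\}$, the maximum principle applied to the plurisubharmonic $\rho|_L$ forces $L\subset\overline{\Omega\times\R_v}$, hence $M\subset\overline\Omega\times\R_v$.

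Next I would show that every leaf $L$ is the graph of a holomorphic $\phi_L:\Omega'_L\to\C$ over a domain $\Omega'_L\subset\C^{n-1}$ with smooth boundary, smooth up to $b\Omega'_L$. Writing $(z,w)=(z,u+iv)$ and $\pi_z(z,w):=z$, the restriction of $\pi_z$ to $bL\subset S=\operatorname{gr}(g)$ is injective onto its smooth image $b\Omega'_L$; since $\pi_z|_{\overline L}$ is a proper holomorphic map of boundary degree one, it is a biholomorphism onto $\Omega'_L:=\pi_z(L)$, and elliptic boundary regularity for the Harvey-Lawson problem yields $\phi_L\in C^\infty(\overline{\Omega'_L})$. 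Then $f$ is defined by $f(z_0,u_0):=\operatorname{Im}\phi_L(z_0)$, where $L$ is the unique leaf with $z_0\in\Omega'_L$ and $\operatorname{Re}\phi_L(z_0)=u_0$; uniqueness of this leaf comes from the maximum principle applied to $\operatorname{Re}(\phi_{L_1}-\phi_{L_2})$ for two competing leaves (with compatible boundary values inherited from $S\subset b\Omega\times\R_v$), and existence follows from the smooth foliation parameter of Propositions~\ref{Prop:2.3.1} and~\ref{Prop:2.7.2} together with an intermediate value argument on $\operatorname{Re}\phi_L(z_0)$. The equality $M=\operatorname{gr}(f)$ is then immediate.

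The main obstacle will be the regularity of $f$ at the projections $Q$, in particular continuity at $q_4$, the projection of the $1$-hyperbolic point $h$. Near $h$ the foliation of $S$ bifurcates (a $(2n-3)$-sphere above $\Sigma'$ splits into two spheres below it), and the singular orbit $\Sigma'$ accumulates at $h$; nevertheless the function $\nu$ of Proposition~\ref{Prop:2.7.2} is continuous across $h$ with $\nu(h)=0$, and transporting it through the graph map should yield the desired continuity of $f$ at $q_4$. Smoothness of $f$ on $\overline\Omega\setminus Q$ follows from the smooth variation of the leaves combined with transversality of $\pi_z$ to the leaves away from $P$; the elliptic projections $q_1,q_2,q_3$ are treated exactly as in \cite{DTZ09}.
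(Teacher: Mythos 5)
Your overall architecture (start from Theorem~\ref{Thm:3.4.3}, confine $M$ to $\overline\Omega\times i\R_v$ by plurisubharmonicity, show each leaf is a graph over a domain in $\C_z^{n-1}$, then assemble $f$ leaf by leaf) matches the paper's at the coarsest level, but the two steps that carry the real weight of the theorem are asserted rather than proved, and one of them is attacked with an argument that does not close. First, the claim that $\pi_z|_{bL}$ is injective is not justified: $bL$ is a CR orbit sitting in $S=\mathrm{gr}(g)\subset b\Omega\times i\R_v$, and while it projects injectively to $b\Omega\subset\C_z^{n-1}\times\R_u$, the further projection to $\C_z^{n-1}$ can a priori fold (just as $b\Omega$ itself is generically two-to-one over its shadow in $\C_z^{n-1}$). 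The paper obtains the leaf-is-a-graph statement from the \emph{local} graph property near CR boundary points (Lemma~\ref{Lem:4.3.4}, which needs the Hopf lemma for transversality) combined with Shcherbina's convexity lemma (Lemma~\ref{Lem:3.2}: a surface that is locally a graph $u=u(z)$ and properly embedded in a convex domain of $\C_z\times\R_u$ is globally a graph) and Lemma~\ref{Lem:4.5.1}; your ``proper holomorphic map of boundary degree one'' shortcut presupposes exactly what must be shown.

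Second, the uniqueness of the leaf through a given $(z_0,u_0)$ --- equivalently, that $M$ meets each line $\{(z_0,u_0)\}\times i\R_v$ at most once --- cannot be obtained by applying the maximum principle to $\mathrm{Re}(\phi_{L_1}-\phi_{L_2})$: this pluriharmonic function lives on $\Omega'_{L_1}\cap\Omega'_{L_2}$, whose boundary is not contained in $b\Omega'_{L_1}\cup b\Omega'_{L_2}$ in any controlled way, so there are no sign-definite boundary values to feed into the maximum principle; note also that disjointness of the fillings $V_{L_1},V_{L_2}$ (Lemma~\ref{Lem:4.3.2}) does not forbid two leaves from lying over the same $(z_0,u_0)$ with different $v$-values. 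The paper's route is different and essential: it slices $S$ and $M$ by $4$-real-dimensional affine subspaces $H_{(\zeta,\xi)}+\C_w$ and invokes Shcherbina's Main Theorem that the polynomial hull of a graph over the boundary of a strictly convex domain in $\C_z\times\R_u$ is again a graph (Proposition~\ref{Prop:4.4.2}); monotonicity in $t$ of $u_t=\mathrm{Re}\,\phi_{L_t}(z_0)$ is then established separately via harmonicity of $\partial u_t/\partial t$, non-vanishing on $b\Omega_t$ (Hopf lemma/transversality), connectedness of $b\Omega_t$, and the maximum principle. Your ``intermediate value argument'' for existence and the remark that continuity at $q_4$ ``should'' follow from continuity of $\nu$ likewise skip the substantive points: surjectivity onto $\overline\Omega$ and continuity at the complex points are obtained in the paper from Shcherbina's stability lemma for polynomial hulls and the Kontinuit\"atssatz, none of which appear in your sketch.
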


The natural candidate to be the graph $M$ of $f$ is $\pi\big(\tilde M\big)$ where $\tilde M$ and $\pi$ are as in Theorem \ref{Thm:3.4.3}. We prove that this is the case proceeding in several steps.

\subsection{Behaviour near $S$}\label{Sec:4.3}

\subsubsection{}\label{Sec:4.3.1}

{\it Assume that $D$ is a strictly pseudoconvex domain and that $S\subset bD$.}

Recall (\cite{HL75}[Theorem 10.4]: {\it Let $D$ be a strictly pseudoconvex domain of $\C^n$, $n\geq 3$ with boundary $bD$, $\Sigma\subset bD$ be a compact connected maximally complex smooth $(2d-1)$-submanifold with $d\geq 2$. Then, $\Sigma$ is the boundary of a uniquely determined relatively compact subset $V\subset \overline D$ such that $\overline V\setminus \Sigma$ is a complex analytic subset of $D$ with finitely many singularities of pure dimension $\leq d-1$, and near $\Sigma$, $\overline V$ is a $d$-dimensional complex manifold with boundary.}

$V$ is said to be {\it the solution of the boundary problem for} $\Sigma$.

\subsubsection{}\label{Sec:4.3.2}
\begin{lem}[\cite{DTZ09}]\label{Lem:4.3.2} Let $\Sigma_1$, $\Sigma_2$ be compact connected maximally complex $(2d-1)$-submanifolds of $bD$. Let $V_1$, $V_2$ be the corresponding solutions of the boundary problem. If $d\geq 2$, $2d\geq n+1$ and $\Sigma_1\cap\Sigma_2=\emptyset$, then $V_1\cap V_2=\emptyset$.
\end{lem}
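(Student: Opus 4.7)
The plan is to argue by contradiction: suppose $V_1\cap V_2\neq\emptyset$ and fix a point $p$ of intersection. The first step is to reformulate things via the Harvey--Lawson theorem recalled in \ref{Sec:4.3.1}: each $W_j:=\overline{V_j}\setminus\Sigma_j$ is a pure $d$-dimensional complex analytic subset of $D$, and since near $\Sigma_j$ the set $\overline{V_j}$ is a $d$-dimensional complex manifold with boundary $\Sigma_j$, we have $\overline{V_j}\cap bD=\Sigma_j$. Thus the two pure $d$-dimensional analytic subsets $W_1,W_2$ of the $n$-dimensional ambient $D$ both pass through $p$.

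The next step is the classical lower bound for the intersection of complex analytic subsets: at any point $p\in W_1\cap W_2$, every local irreducible component of $W_1\cap W_2$ through $p$ has dimension at least $\dim W_1+\dim W_2-n=2d-n$. The hypothesis $2d\geq n+1$ yields $2d-n\geq 1$, so $W_1\cap W_2$ carries a positive-dimensional analytic germ at $p$; let $A\subset D$ denote such a positive-dimensional local irreducible component through $p$. Now strict pseudoconvexity of $D$ is brought in: choose a strictly plurisubharmonic defining function $\rho$ of $D$ (negative on $D$, zero on $bD$). Its restriction $\rho|_A$ is strictly plurisubharmonic on the positive-dimensional analytic set $A$, so the maximum principle forbids $A$ from having compact closure inside $D$. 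Hence $\overline{A}\cap bD\neq\emptyset$; but $\overline A\subset\overline{V_1}\cap\overline{V_2}$, so any boundary limit point of $A$ lies in $(\overline{V_1}\cap bD)\cap(\overline{V_2}\cap bD)=\Sigma_1\cap\Sigma_2$, contradicting the disjointness assumption.

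The main obstacle I anticipate is the careful handling of the possible interior singularities of the $W_j$ (which the Harvey--Lawson theorem allows) when invoking the intersection-dimension estimate $\dim_p(W_1\cap W_2)\geq 2d-n$. This is a standard fact in the theory of complex analytic sets (see, e.g., Chirka's monograph \emph{Complex Analytic Sets}) but it must be cited explicitly rather than derived from naive smooth transversality, since $W_1\cap W_2$ could meet only at singular points. Apart from this, the reduction via Harvey--Lawson and the maximum-principle step are routine, and the disjointness of $\Sigma_1$ and $\Sigma_2$ enters only at the very last line to close the contradiction.
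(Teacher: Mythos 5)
The paper itself gives no proof of this lemma---it is simply quoted from \cite{DTZ09}---but your argument is correct and is essentially the standard one used there: the intersection estimate $\dim_p(W_1\cap W_2)\geq 2d-n\geq 1$, the impossibility of a compact positive-dimensional analytic subset of $D$, and the inclusion $\overline{V_j}\cap bD\subset\Sigma_j$ together force $\Sigma_1\cap\Sigma_2\neq\emptyset$. The only polish needed is to replace your \emph{local} irreducible component $A$ at $p$ by the global irreducible component of the closed analytic subset $W_1\cap W_2$ of $D$ containing it (which is closed in $D$ and has compact closure since $\overline{V_1}$ is compact), so that the maximum-principle step legitimately produces a limit point on $bD$.
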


Let $\Sigma$ be a CR orbit of the foliation of $S\setminus P$. Then $\Sigma$ is a compact maximally complex $(2n-3)$-dimensional real submanifold of $\C^n$ contained in $bD$. Let $V=V_\Sigma$ be the solution of the boundary problem corresponding to $\Sigma$. From Theorem \ref{Thm:3.4.3}, $V=\pi(\tilde V)$, where $\tilde V=(\tilde M\setminus\tilde S)\cap (\C^n\times \{x\})$ for suitable $x\in (0,1)$, the projection on the $x$-axis being finite, we can always assume that it lies into $(0,1)$. Moreover $\pi_{\vert\tilde V}$ is a biholomorphism $\tilde V \cong V$ and $M\setminus S\subset D$.

Let $\Sigma_1$, $\Sigma_2$ be two distinct orbits of the foliation of $S\setminus P$, and $\overline V_1, \overline V_2$ the corresponding leaves, then, from Lemma \ref{Lem:4.3.2}, $\overline V_1\cap \overline V_2=\emptyset$.

\subsubsection{}\label{Sec:4.3.3}

{\it Assume that $S$ satisfies the full hypotheses of} Theorem \ref{Thm:4.2}.

\smallskip

Set $m_1=\min\limits_S g$, $m_2=\max\limits_S g$ and $r\gg 0$ such that
$$
D=\Omega\times[m_1,m_2]\subset\subset{
\bf B(r)}\cap\big(\Omega\times i\R_v\big)
$$
where ${\bf B(r)}$ is the ball $\{\vert(z,w)\vert<r\}$.

\subsubsection{}\label{Sec:4.3.4}
\begin{lem}\label{Lem:4.3.4}  Let $p\in S$ be a CR point. Then, near $p$, $M$ is the graph of a function $\phi$ on a domain $U\subset\C_z^{n-1}\times\R_u$ which is smooth up to the boundary of $U$.\end{lem}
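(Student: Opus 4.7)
The plan is to prove the lemma by showing that the real projection
$$\pi_0\colon\C^n\to\C^{n-1}\times\R_u,\quad (z,u+iv)\mapsto(z,u),$$
restricts, near $p$, to a diffeomorphism of the manifold-with-boundary $M\cup S$ (with boundary $S$) onto its image; inverting the projection then writes $v$ as a smooth function $\phi(z,u)$ on the image, yielding the lemma.

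The local structure is the following. Because $p$ is a CR point at which $S$ is nowhere minimal, a neighborhood of $p$ in $S$ is foliated by $(2n-3)$-dimensional CR orbits $\Sigma_t$, each maximally complex and contained in the strictly pseudoconvex hypersurface $H:=b\Omega\times\R_v$. By the Harvey--Lawson theorem of Section~\ref{Sec:4.3.1}, each $\Sigma_t$ bounds a unique complex $(n-1)$-subvariety $V_t\subset\overline D$, smooth up to $\Sigma_t$; by Lemma~\ref{Lem:4.3.2} the $V_t$ are pairwise disjoint. By the construction in Theorem~\ref{Thm:3.4.3} they are the leaves of $M$ near $p$, and because $p$ is a CR (not complex) point, Theorem~\ref{Thm:3.1.2} implies that $M\cup S$ is a smooth hypersurface with boundary $S$ in a neighborhood of $p$.

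The heart of the argument is the transversality statement that $d\pi_0|_{T_pM}$ is an isomorphism. Let $V$ denote the leaf of $M$ through $p$ and $\Sigma=\partial V\subset S$. At the CR point, $H_pS=H_p\Sigma$; picking $v_0\in T_p\Sigma\setminus H_p\Sigma$ gives $Jv_0\notin T_pS$, so
$$T_pV=H_pS\oplus\R v_0\oplus\R Jv_0,\qquad T_pV\cap T_pS=T_p\Sigma,\qquad T_pM=T_pV+T_pS,$$
of real dimensions $2n-2$, $2n-3$, $2n-1$ respectively. Since $S$ is the graph of $g$ over $b\Omega$, $d\pi_0$ sends $T_pS$ isomorphically onto $T_{\pi_0(p)}b\Omega$. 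For the complementary direction, choose a local strictly plurisubharmonic defining function $\tilde\rho$ of $H$ near $p$, with $\tilde\rho<0$ on the side of $D$; then $\tilde\rho|_V$ is strictly plurisubharmonic on the complex manifold $V$, is $\leq 0$, and vanishes on $\partial V=\Sigma$. The Hopf lemma produces an outward unit vector $n_V\in T_pV$ with $d\tilde\rho(n_V)>0$, whence $n_V\notin T_pH=T_{\pi_0(p)}b\Omega\oplus\R\partial_v$. Decomposing $n_V=a+b\,\partial_v$ with $a\in\C^{n-1}\oplus\R_u$ forces $a=d\pi_0(n_V)\notin T_{\pi_0(p)}b\Omega$, and combined with $d\pi_0(T_pS)=T_{\pi_0(p)}b\Omega$ one obtains $d\pi_0(T_pM)=\C^{n-1}\oplus\R_u$. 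Since both spaces have the same real dimension, $d\pi_0|_{T_pM}$ is an isomorphism.

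By continuity the isomorphism property persists on a full neighborhood of $p$ in $M\cup S$, and the inverse function theorem (applied to manifolds with boundary) yields a domain $U\subset\C^{n-1}\times\R_u$ with $\pi_0(p)\in b\Omega\cap\overline U$ and a smooth $\phi\colon\overline U\to\R_v$, satisfying $\phi|_{b\Omega\cap\overline U}=g$, such that $M$ equals the graph of $\phi$ near $p$. The main obstacle is the transversality step, which crucially invokes the Hopf lemma and the strict pseudoconvexity of $b\Omega\times\R_v$; the remaining steps are routine.
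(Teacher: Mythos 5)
Your proposal is correct and follows essentially the same route as the paper: represent the leaves $V_\Sigma$ near $p$ as graphs of holomorphic extensions of CR functions, use the Hopf lemma applied to a strictly plurisubharmonic defining function of $b\Omega\times i\R_v$ restricted to a leaf to get transversality of the leaf to that hypersurface, conclude that the family of leaves is the graph of a smooth function near $p$, and invoke Lemma~\ref{Lem:4.3.2} for disjointness. Your write-up merely makes explicit the tangent-space decomposition and the inverse-function-theorem step that the paper leaves implicit.
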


\begin{proof} Near $p$, each CR orbit $\Sigma$ is smooth and can be represented as the graph of a CR function over a strictly pseudoconvex hypersurface and $V_\Sigma$ as the graph of the local holomorphic extension of this function. From Hopf lemma, $V$ is transversal to the strictly pseudoconvex hypersurface $d\Omega\times i\R_v$ near $p$. Hence the family of the $V_\Sigma$, near $p$, forms a smooth real hypersurface with boundary on $S$ that is the graph of a smooth function $\phi$ from a relative open neighborhood $U$ of $p$ on $\overline \Omega$ into $\R_v$. Finally, Lemma \ref{Lem:4.3.2} garantees that this family does not intersect any other leaf $V$ from $M$.
\end{proof}

\subsubsection{}\label{Sec:4.3.5}
\begin{cor}\label{Cor:4.3.5} If $p\in S$ is a CR point, each complex leaf $V$ of $M$, near $p$, is the graph of a holomorphic function on a domain $\Omega_V\subset \C_z^{n-1}$, which is smooth up to the boundary of $\Omega_V$.
\end{cor}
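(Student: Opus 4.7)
The plan is to derive the corollary directly from Lemma~\ref{Lem:4.3.4} by a tangent-space computation followed by the holomorphic inverse function theorem. By Lemma~\ref{Lem:4.3.4} one can write, near $p$,
$$M=\{(z,u+i\phi(z,u)):(z,u)\in U\},$$
with $U\subset\C^{n-1}_z\times\R_u$ relatively open and $\phi$ smooth up to $\partial U$. Let $V$ be any complex leaf of the Levi foliation of $M$ through a point close to $p$, and let $\pi_z:\C^{n-1}\times\C\to\C^{n-1}$, $(z,w)\mapsto z$, be the standard holomorphic projection.

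The key step is to show that $\pi_z|_V$ is a local biholomorphism. Since $M$ is Levi-flat, the complex tangent spaces satisfy $T^{1,0}_q V=T^{1,0}_q M$ at every $q$ in the leaf, so it is enough to check that $d\pi_z$ sends $T^{1,0}_q M$ isomorphically onto $T^{1,0}_{\pi_z(q)}\C^{n-1}$. A short computation from the defining function $\rho:=v-\phi(z,u)$ yields
$$\partial\rho=-\partial_z\phi\,dz-\tfrac{1}{2}(i+\partial_u\phi)\,dw,$$
so any vector $Z=a\,\partial_z+b\,\partial_w\in T^{1,0}_q M$ is determined by its $\partial_z$-part via $b=-2\,\partial_z\phi(a)/(i+\partial_u\phi)$. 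Consequently $d\pi_z|_{T^{1,0}_qV}$ is a complex-linear isomorphism, and the holomorphic inverse function theorem shows that $\pi_z|_V$ is a local biholomorphism onto $\Omega_V:=\pi_z(V)\subset\C^{n-1}$. Hence, near $p$, $V$ is the graph of a holomorphic function $\phi_V\colon\Omega_V\to\C$.

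Smoothness of $\phi_V$ up to $b\Omega_V$ is then inherited from the smoothness of $\phi$ up to $\partial U$. Under the graph parametrization $(z,u)\mapsto(z,u+i\phi(z,u))$, the leaf $V$ corresponds to a smooth real hypersurface $L\subset U$, smooth up to $L\cap\partial U$. The complex non-verticality obtained above translates into real non-verticality of $L$ with respect to the projection $\C^{n-1}\times\R_u\to\C^{n-1}$, so $\pi_z(L)=\Omega_V$ is a smooth domain with smooth boundary near $\pi_z(p)$, and $\phi_V$ extends smoothly up to $b\Omega_V$.

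The main subtlety is this last boundary-regularity step: one must argue that $\pi_z(\overline{L}\cap\partial U)$ is itself a smooth hypersurface in $\C^{n-1}$, which in turn reduces to the real transversality of $L$ to the fibers of $\pi_z$. This transversality is the real counterpart of the complex non-verticality established in the tangent-space calculation of step two, and so the entire corollary follows once that single computation is in hand.
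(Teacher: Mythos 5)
Your argument is correct in substance but follows a genuinely different route from the paper's. The paper gives no separate proof of Corollary~\ref{Cor:4.3.5}: it is read off directly from the \emph{proof} of Lemma~\ref{Lem:4.3.4}, where each leaf $V_\Sigma$ is already constructed as the graph of the local holomorphic extension of a CR function defined on the orbit $\Sigma$ (itself a graph over a strictly pseudoconvex hypersurface in $\C_z^{n-1}$), with transversality to $b\Omega\times i\R_v$ supplied by the Hopf lemma. You instead use only the \emph{statement} of Lemma~\ref{Lem:4.3.4} --- the presentation of $M$ as a graph over $U\subset\C_z^{n-1}\times\R_u$ --- together with Levi-flatness, and your computation $\partial\rho=-\partial_z\phi\,dz-\tfrac12(i+\partial_u\phi)\,dw$ is correct: since $\partial_u\phi$ is real, $i+\partial_u\phi\neq0$, so $\partial_w\notin T^{1,0}M$ and $d\pi_z$ is an isomorphism on $T^{1,0}V=T^{1,0}M$. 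This is more self-contained and is essentially the same mechanism the paper exploits later in Lemma~\ref{Lem:4.5.1} and in Shcherbina's results quoted in section~\ref{Sec:4.4.1}; what the paper's route buys is that the holomorphic graphing function and its boundary regularity come for free from the CR-extension description, with no separate inverse-function-theorem step.

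One point in your last step deserves sharpening. Smoothness of $\phi_V$ up to $b\Omega_V$ is not formally ``inherited from the smoothness of $\phi$ up to $\partial U$'': knowing that the ambient graphing function $\phi$ is smooth on $\overline U$ does not by itself imply that an individual leaf $L$ of the induced foliation of $U$ is smooth up to $\overline L\cap\partial U$. The input you actually need is that $\overline{V_\Sigma}$ is a complex manifold with boundary near its boundary orbit $\Sigma$; this is exactly the Harvey--Lawson boundary regularity recalled in section~\ref{Sec:4.3.1} (and used in the proof of Lemma~\ref{Lem:4.3.4}). Once you cite that, your transversality observation does correctly yield that $\pi_z(\overline L\cap\partial U)=b\Omega_V$ is smooth and that $\phi_V$ extends smoothly to $\overline{\Omega_V}$. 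So the gap is only one of attribution, not of mathematics, but as written the boundary-regularity paragraph rests on a non sequitur until that citation is inserted.
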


\subsection{Solution as a graph of a continuous function}\label{Sec:4.4}

\subsubsection{}\label{Sec:4.4.1}
Recall results of Shcherbina \cite{Sh93} from:

(a) the Main Theorem:

{\it Let $G$ be a bounded strictly convex domain in $\C_z\times\R_u$ ($z\in\C$) and $\varphi:bG\rightarrow \R_v$ be a continuous function. Then the following properties hold, where $\Gamma= gr$, and $\hat\Gamma(\varphi)$ means polynomial hull of $\Gamma(\varphi)$:

$(a_i)$ the set $\hat\Gamma(\varphi)\setminus\Gamma(\varphi)$ is the union of a disjoint family of complex discs} $\{D_\alpha\}$;

$(a_{ii})$ {\it for each $\alpha$, there is a simply connected domain $\Omega_\alpha\subset\C_z$ and a holomorphic function $w=f_\alpha$, defined on $\Omega_\alpha$, such that $D_\alpha$ is the graph of $f_\alpha$.}

$(a_{iii})$ {\it For each $f_\alpha$, there exists an extension $f_\alpha^*\in C(\overline\Omega_\alpha)$ and $bD_\alpha=\{(z,w)\in b\Omega_\alpha\times \C_w: w=f_\alpha^*(z)\}$}.

(b)
\begin{lem}\label{Lem:2.4}
Let $\{G_n\}_{n=0}^\infty$, $G_n\subset\C_z\times\R_u$, be a sequence of bounded strictly convex domains such that $G_n\rightarrow G_0$. Let $\{\varphi_n\}_{n=0}^\infty$, $\varphi_n:\partial G_n\rightarrow\R_v$ be a sequence of continuous functions such that $\Gamma(\varphi_n)\rightarrow \Gamma(\varphi_0)$ in the Hausdorff metric. Then, if $\Phi_n$ is the continuous function $:\overline G_n\rightarrow\R_v$ such that $\hat\Gamma(\varphi)=\Gamma(\Phi)$, we have $\Gamma(\Phi_n)\rightarrow \Gamma(\Phi_0)$ in the Hausdorff metric.\end{lem}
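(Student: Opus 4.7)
The strategy reduces the claim, via Blaschke's compactness theorem in the Hausdorff metric, to showing that any Hausdorff-convergent subsequence of $\{\Gamma(\Phi_n)\}$ has limit $\Gamma(\Phi_0)$. The proof will not need the detailed structure of $\hat\Gamma(\varphi_n)$ from the Main Theorem beyond the single fact that this hull is the graph $\Gamma(\Phi_n)$ of a continuous function on $\overline G_n$, and is polynomially convex.

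First I would establish uniform boundedness. The hypothesis $G_n\to G_0$, together with the Hausdorff convergence $\Gamma(\varphi_n)\to\Gamma(\varphi_0)$, implies that both $\bigcup_n\overline G_n$ and $\bigcup_n\Gamma(\varphi_n)$ lie in a fixed ball of $\C_z\times\C_w$. Applying the maximum principle to the polynomial $P(z,w)=w$ on $\hat\Gamma(\varphi_n)=\Gamma(\Phi_n)$ gives
\[
|u+i\Phi_n(z,u)|\le \max_{\Gamma(\varphi_n)}|u'+i\varphi_n(z',u')|\le C,
\]
so all graphs $\Gamma(\Phi_n)$ lie in a common compact $K^*\subset\C^2$. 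By Blaschke selection, extract a subsequence with $\Gamma(\Phi_{n_k})\to K$ in Hausdorff metric; it suffices to show $K=\Gamma(\Phi_0)$.

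For the upper-semicontinuity inclusion $K\subset\Gamma(\Phi_0)$: for every polynomial $P$ in $(z,w)$, polynomial convexity of $\Gamma(\Phi_{n_k})$ gives $\max_{\Gamma(\Phi_{n_k})}|P|=\max_{\Gamma(\varphi_{n_k})}|P|$, and Hausdorff convergence preserves suprema of continuous functions, hence
\[
\max_K|P|=\lim_k\max_{\Gamma(\Phi_{n_k})}|P|=\lim_k\max_{\Gamma(\varphi_{n_k})}|P|=\max_{\Gamma(\varphi_0)}|P|.
\]
Thus every point of $K$ satisfies the defining inequality of the polynomial hull of $\Gamma(\varphi_0)$, so $K\subset\widehat{\Gamma(\varphi_0)}=\Gamma(\Phi_0)$. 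For the reverse inclusion I would use surjectivity of the real projection $\pi:(z,u+iv)\mapsto(z,u)$ restricted to $K$. Given $(z_0,u_0)\in\overline G_0$, choose $(z_n,u_n)\in\overline G_n$ with $(z_n,u_n)\to(z_0,u_0)$ (possible by Hausdorff convergence of the closures); since $\Phi_{n_k}(z_{n_k},u_{n_k})$ is bounded, a further subsequence converges to some $v_0\in\R$, so $(z_0,u_0+iv_0)\in K$. The previous step forces $v_0=\Phi_0(z_0,u_0)$, showing $K\supset\Gamma(\Phi_0)$. Therefore $K=\Gamma(\Phi_0)$, and since every Hausdorff-convergent subsequence has the same limit, the full sequence converges.

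\textbf{Main obstacle.} The delicate point is the combination of the uniform boundedness and the approximation $(z_n,u_n)\to(z_0,u_0)$; both depend on interpreting $G_n\to G_0$ as Hausdorff convergence of the closures, so that points of $\overline G_0$ can actually be reached as limits of points of $\overline G_n$ and all $\Gamma(\Phi_n)$ remain in a common ball. Once this is made precise the argument is a soft polynomial-hull computation. A less efficient alternative would be to track the individual complex discs $D^{(n)}_\alpha$ from part (a) of Shcherbina's Main Theorem via a normal-families argument, but the approach above completely bypasses that difficulty.
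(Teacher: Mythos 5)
Your argument is correct, but there is nothing in the paper to compare it against: Lemma~\ref{Lem:2.4} is quoted verbatim from Shcherbina \cite{Sh93} as a recalled result, and the paper supplies no proof of it. Judged on its own, your proof is sound and complete in outline: uniform boundedness of the hulls follows from the maximum property of polynomial hulls applied to $P(z,w)=w$ together with the boundedness forced by the Hausdorff convergence of $\Gamma(\varphi_n)$ and of $\overline G_n$; Blaschke selection reduces the claim to identifying subsequential limits; the inclusion $K\subset\Gamma(\Phi_0)$ is the standard upper semicontinuity of polynomial hulls (using $\max_{\widehat{\Gamma(\varphi_{n_k})}}|P|=\max_{\Gamma(\varphi_{n_k})}|P|$ and the fact that Hausdorff convergence inside a fixed compact set preserves maxima of continuous functions); and the reverse inclusion exploits that $\widehat{\Gamma(\varphi_0)}$ is itself a graph over $\overline G_0$, so that any limit point over $(z_0,u_0)$ must be the point $(z_0,u_0+i\Phi_0(z_0,u_0))$. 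What your route buys is economy: it uses only the single statement $\widehat{\Gamma(\varphi_n)}=\Gamma(\Phi_n)$ from part $(a)$ of the Main Theorem, and none of the disc decomposition $\{D_\alpha\}$, whereas the source result sits inside Shcherbina's detailed analysis of the foliation of the hull by analytic discs. The one point you rightly flag as needing care --- that $G_n\rightarrow G_0$ is to be read as Hausdorff convergence of the closures, so that points of $\overline G_0$ are limits of points of $\overline G_n$ and all the sets lie in a common ball --- is indeed the intended meaning and closes the argument; I see no gap.
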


(c)
\begin{lem}\label{Lem:3.2} Let ${\mathcal U}$ be a smooth connected surface which is properly embedded into some convex domain $G\subset \C_z\times\R_u$. Suppose that near each point of this surface, it can be defined locally by the equation $u=u(z)$. Then the surface ${\mathcal U}$ can be represented globally as a graph of some function $u=U(z)$, defined on some domain $\Omega\subset\C_z$.\end{lem}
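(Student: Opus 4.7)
Let $\pi : \C_z \times \R_u \to \C_z$ denote the projection and put $\Omega := \pi(\mathcal U)$. The local representation $u = u(z)$ says precisely that $\pi|_{\mathcal U}$ is a local diffeomorphism; hence $\Omega$ is open, connected (as the continuous image of the connected set $\mathcal U$), and $\pi|_{\mathcal U} : \mathcal U \to \Omega$ is an open map. It therefore suffices to prove that $\pi|_{\mathcal U}$ is injective, for then $U := (\pi|_{\mathcal U})^{-1}$ is the desired function, smooth by the inverse function theorem.

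The plan for injectivity is to count sheets. For $z \in \Omega$ set $F_z := \pi^{-1}(z) \cap \mathcal U$ and $\nu(z) := \# F_z \in \N \cup \{\infty\}$. The local graph hypothesis immediately gives that $\nu$ is lower semicontinuous: every point of $F_{z_0}$ extends to a local graph over a neighborhood of $z_0$, producing at least $\nu(z_0)$ preimages nearby. The central step is upper semicontinuity of $\nu$: no ``new'' preimage can suddenly appear as $z$ moves. This is where the convexity of $G$ and the proper embedding of $\mathcal U$ in $G$ are both essential. By convexity each slice $\pi^{-1}(z) \cap G$ is a connected open interval in $\R_u$, inside which $F_z$ sits as a discrete closed subset; by proper embedding $\mathcal U$ is closed in $G$, so any limit in $G$ of points of $\mathcal U$ lies again in $\mathcal U$. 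A hypothetical extra preimage $(z_n, u_n) \in \mathcal U$ with $z_n \to z_0$ would thus either converge to a point of $F_{z_0}$ --- in which case it would coincide, for large $n$, with the local sheet through that point and so not be ``extra'' --- or else escape to $\partial G$; the latter is excluded by using continuity of the slice endpoints, which follows from convexity of $G$.

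Once $\nu$ is both lower and upper semicontinuous, it is locally constant and, by connectedness of $\Omega$, equal to some constant $k \in \N \cup \{\infty\}$. Using the total order on $\R_u$ I then label the $k$ preimages over each $z \in \Omega$ as $U_1(z) < U_2(z) < \cdots < U_k(z)$. The local graph hypothesis makes each $U_i$ smooth (the ranking is preserved by local continuation), and the graphs of the $U_i$ are pairwise disjoint smooth surfaces whose union is $\mathcal U$. Since $\mathcal U$ is connected we must have $k = 1$, which yields injectivity of $\pi|_{\mathcal U}$ and hence the global graph representation $u = U(z) := U_1(z)$.

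The main obstacle I foresee is the upper semicontinuity of $\nu$, i.e.\ the exclusion of a brand-new sheet appearing in a fiber as $z$ moves; this is precisely the step that genuinely uses both convexity of $G$ and the proper embedding of $\mathcal U$ into $G$. Once $\nu$ is locally constant, the sheet-ordering argument and the use of connectedness are formal.
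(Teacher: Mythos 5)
First, on the comparison itself: the paper does not prove this statement — it is quoted verbatim (even keeping its original numbering) from Shcherbina \cite{Sh93} as a known result, so there is no internal proof to measure your argument against. Judged on its own merits, your proposal has a genuine gap at exactly the step you yourself flag as the main obstacle.

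The gap is the claimed upper semicontinuity of $\nu(z)=\#\bigl(\pi^{-1}(z)\cap\mathcal U\bigr)$. You assert that the scenario in which an extra preimage $(z_n,u_n)\in\mathcal U$ escapes to $\partial G$ as $z_n\to z_0$ is ``excluded by continuity of the slice endpoints, which follows from convexity of $G$.'' Nothing in convexity or properness excludes this: a properly embedded local graph may perfectly well tend to $\partial G$ over an interior point of $\Omega$. Concretely, in the unit ball $G=\{|z|^2+u^2<1\}\subset\C_z\times\R_u$, let $\mathcal U$ be the union of the equatorial disc $\{u=0\}\cap G$ and the surface $u=\sqrt{1-|z|^2}\,e^{-\mathrm{Re}\,z}$ over the half-disc $\{|z|<1,\ \mathrm{Re}\,z>0\}$; the second piece is closed in $G$ because it tends to the sphere both as $|z|\to 1$ and as $\mathrm{Re}\,z\to 0^{+}$. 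This $\mathcal U$ is properly embedded, everywhere locally a graph, and $\nu$ jumps from $1$ to $2$ across $\{\mathrm{Re}\,z=0\}$. Of course this $\mathcal U$ is disconnected — but your argument for local constancy of $\nu$ nowhere uses connectedness (you invoke it only at the very end to force $k=1$), so as written it proves a false statement. Ruling out a sheet appearing from, or disappearing into, $\partial G$ is not a local matter; it is essentially the entire content of the lemma and must use the connectedness of $\mathcal U$ globally, which is what Shcherbina's actual argument does. The remaining steps of your plan (lower semicontinuity, ordering the sheets, each sheet being open and closed in $\mathcal U$, hence $k=1$) are sound, with the minor additional caveat that for $k=\infty$ a fibre that is a closed discrete subset of an open interval need not have a least element, so the labelling $U_1<U_2<\cdots$ is not automatic.
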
 

\subsubsection{}\label{Sec:4.4.2}
\begin{prop}\label{Prop:4.4.2} $M$ is the graph of a continuous function $f:\overline\Omega\rightarrow \R_v$.\end{prop}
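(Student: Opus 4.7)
The plan is to reduce Proposition~\ref{Prop:4.4.2} to Shcherbina's two-dimensional theory by slicing. Denote by $\mathrm{pr}\colon \C^n\to\C^{n-2}$ the projection onto the first $n-2$ complex coordinates, and for $a\in\C^{n-2}$ set $G_a=\Omega\cap\mathrm{pr}^{-1}(a)$, $S_a=S\cap\mathrm{pr}^{-1}(a)$, $M_a=M\cap\mathrm{pr}^{-1}(a)$, identifying $\mathrm{pr}^{-1}(a)\cong \C_{z_{n-1}}\times\C_w$. Since the second fundamental form of $b\Omega$ is positive definite, every nonempty $G_a$ is a bounded strictly convex domain in $\C_{z_{n-1}}\times\R_u$, and $S_a$ is the graph over $bG_a$ of the continuous function $g|_{bG_a}$.

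For each such $a$, Shcherbina's Main Theorem (section~\ref{Sec:4.4.1}) applied to $g|_{bG_a}$ produces a continuous $\Phi_a\colon \overline{G_a}\to \R_v$ whose graph equals the polynomial hull $\hat{\Gamma}(g|_{bG_a})$; this hull minus $S_a$ is a disjoint union of complex discs, each a holomorphic graph over a simply connected subdomain of $\C_{z_{n-1}}$. I then identify $M_a$ with $\Gamma(\Phi_a)$ as follows. For generic $a$, the slice $\mathrm{pr}^{-1}(a)$ meets every $(n-1)$-dimensional complex leaf of $M$ transversally, so $M_a$ inherits a Levi-flat structure with boundary $S_a$, foliated by complex $1$-dimensional varieties. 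Cor~\ref{Cor:4.3.5} makes each such sliced leaf a local holomorphic graph over $\C_{z_{n-1}}$ near $S_a$; projecting to $\C_{z_{n-1}}\times\R_u$ and invoking Lemma~\ref{Lem:3.2} globalises this to a graph, giving each sliced leaf as a holomorphic disc over a simply connected domain, while Lemma~\ref{Lem:4.3.2} keeps distinct leaves disjoint. Theorem~\ref{Thm:3.1.2} applied to $N=S_a$ in $\C^2$ (with $k$ the real-part projection onto $\R_u$) then identifies $M_a\setminus S_a$ with Shcherbina's hull by uniqueness of the Harvey--Lawson solution.

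Having $M_a=\Gamma(\Phi_a)$ for (at least generic) $a$, I define $f\colon \overline\Omega\to\R_v$ by $f(z,u):=\Phi_{(z_1,\ldots,z_{n-2})}(z_{n-1},u)$; by construction $\mathrm{gr}(f)=\bigcup_a M_a=M$ and $f|_{b\Omega}=g$. Continuity of $f$ is Shcherbina's Lemma~\ref{Lem:2.4}: as $a'\to a$ in $\mathrm{pr}(\overline\Omega)$, $G_{a'}\to G_a$ and (because $g$ is smooth) $g|_{bG_{a'}}\to g|_{bG_a}$ in the Hausdorff metric, so $\Gamma(\Phi_{a'})\to \Gamma(\Phi_a)$, which yields continuity of $f$. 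The main obstacle is the identification $M_a=\Gamma(\Phi_a)$ at the non-generic slices, which include the finitely many $a\in\mathrm{pr}(Q)$ beneath the complex points of $S$ and the thin set of $a$ at which some leaf of $M$ fails to meet $\mathrm{pr}^{-1}(a)$ transversally; this must be recovered by passage to the limit from generic values, again relying on Lemma~\ref{Lem:2.4} together with the Hausdorff continuity of the family of CR orbits parametrised by $\nu$ from Proposition~\ref{Prop:2.7.2}.
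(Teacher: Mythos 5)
Your overall strategy---slice $\Omega$ by affine subspaces of the form $\C\times\C_w$ on which $\Omega$ cuts out a strictly convex domain, apply Shcherbina's theorem in each slice, and identify the slice of $M$ with the polynomial hull of the sliced boundary---is exactly the paper's strategy (the paper uses the slices $H_{(\zeta,\xi)}+\C_w(0,1)$ through each point of $\Omega$ in a fixed complex direction $(a,0)$, which play the same role as your $\mathrm{pr}^{-1}(a)$). The gap is in the identification $M_a=\Gamma(\Phi_a)$, which is the whole content of the proposition. First, your mechanism for it does not close even for generic $a$: Theorem~\ref{Thm:3.1.2} is stated for $n\geq 3$ and produces a hypersurface foliated by the \emph{level sets of} $x_1=u$, whereas Shcherbina's discs $D_\alpha$ are holomorphic graphs over domains in $\C_z$ on which $u$ is not constant; so ``uniqueness of the Harvey--Lawson solution'' does not identify $M_a\setminus S_a$ with $\hat\Gamma(g|_{bG_a})\setminus\Gamma(g|_{bG_a})$. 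Moreover your argument via Corollary~\ref{Cor:4.3.5} and Lemma~\ref{Lem:3.2} only shows that the sliced leaves of $M$ are holomorphic graphs, i.e.\ at best an inclusion of $M_a$ into a union of graphs; it never establishes the reverse inclusion $\hat S_a\setminus S_a\subset M_a$, without which $M_a$ need not be a graph over all of $\overline{G_a}$ and $f$ is not even defined everywhere. Second, you explicitly defer the non-generic slices (including those through $Q$, and those tangent to some leaf) to a limiting argument that is only asserted; a single bad slice suffices to destroy the graph property, so this cannot be left open.

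The paper's proof avoids both problems and works for \emph{every} slice simultaneously. The inclusion $M_{(\zeta,\xi)}\subset\hat S_{(\zeta,\xi)}$ holds because each leaf $V_\Sigma$ of $M$ is an analytic variety with boundary $\Sigma\subset S$, so its intersection with the slice lies in the polynomial hull of $S_{(\zeta,\xi)}$; by Shcherbina's $(a_{iii})$ the hull is a graph, hence so is $M_{(\zeta,\xi)}$, over a subset of $\overline\Omega_{(\zeta,\xi)}$. For the reverse inclusion, every Shcherbina disc $\Delta$ of the hull has boundary on $S_{(\zeta,\xi)}$, and since the complex points of $S$ are isolated, $b\Delta$ contains a CR point $p$ of $S$; Lemma~\ref{Lem:4.3.4} makes $M$ near $p$ a smooth family of local holomorphic graphs, so near $p$ the disc $\Delta$ coincides with a piece of a leaf $V_\Sigma$, and the identity principle for analytic sets then forces $\Delta\subset V_\Sigma\subset M$ and, since $\Delta$ lies in the slice, $\Delta\subset M_{(\zeta,\xi)}$. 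Hence $M_{(\zeta,\xi)}=\hat S_{(\zeta,\xi)}$ for all $(\zeta,\xi)$, no genericity or limiting argument being needed. This ``every hull disc meets a CR boundary point, then propagate by analytic continuation'' step is the missing idea in your proposal. Finally, note that continuity of $f$ at the points $q_j\in Q$ is a separate issue that the paper settles with the Kontinuit\"atssatz on $\Omega\times i\R_v$; your Hausdorff-convergence argument via Lemma~\ref{Lem:2.4} covers the interior and the CR part of the boundary but does not by itself reach those points.
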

\begin{proof} We will intersect the graph $S$ with a convenient affine subspace of real dimension 4 to go back to the situation of Shcherbina.

Fix $a\in (\C_z^{n-1}\setminus 0)$ and, for a given point $(\zeta,\xi)\in\Omega$, with
$\zeta\in\C_z^{n-1}$ and $\xi\in\R_u$, let $H_{(\zeta,\xi)}\subset \C_z^{n-1}\times\{\xi\}$ be the complex line through $(\zeta,\xi)$ in the direction $(a,0)$. Set:

$$L_{(\zeta,\xi)}=H_{(\zeta,\xi)}+\R_u(0,1),\hskip 3mm \Omega_{(\zeta,\xi)}=L_{(\zeta,\xi)}\cap\Omega,\hskip 3mm S_{(\zeta,\xi)}=(H_{(\zeta,\xi)}+\C_w(0,1))\cap S$$

Then $S_{(\zeta,\xi)}$ is contained in the strictly convex cylinder
$$(H_{(\zeta,\xi)}+\C_w(0,1))\cap (b\Omega\times i\R_v)$$
and is the graph of $g_{\vert b\Omega_{(\zeta,\xi)}}$.

From $(a_{ii})$, the polynomial hull of $S_{(\zeta,\xi)}$ is a continuous graph over $\overline\Omega_{(\zeta,\xi)}$. Consider $M=\pi(\tilde M)$ and set

$$M_{\zeta, \xi)}=(H_{(\zeta,\xi)}+\C_w (0,1))\cap M.$$

It follows that $M_{\zeta, \xi)}$ is contained in the polynomial hull $\hat S_{(\zeta,\xi)}$. From $(a_{iii})$, $\hat S_{(\zeta,\xi)}$ is a graph over $\overline \Omega_{(\zeta,\xi)}$ foliated by analytic discs, so $M_{\zeta, \xi)}$ is a graph over a subset $U$ of $\overline \Omega_{(\zeta,\xi)}$.

Every analytic disc $\Delta$ of $\hat S_{(\zeta,\xi)}$ had its boundary on $S_{(\zeta,\xi)}$. Since all the the complex points of $S$ are isolated, $b\Delta$ contains a CR point $p$ of $S$; from Lemma \ref{Lem:4.3.4}, near $p$, $M_{\zeta, \xi)}$ is a graph over $\overline\Omega_{(\zeta,\xi)}$. Near $p$, $\Delta$ is contained in $M_{\zeta, \xi)}$, then in a closed complex analytic leaf $V_\Sigma$ of $M$; so $\Delta\subset V_\Sigma\subset M$; but $\Delta\subset H_{(\zeta,\xi)}+\C_w (0,1)$; then: $\Delta\subset M_{\zeta, \xi)}$.
Consequently, near $p$, $M_{\zeta, \xi)}=\hat S_{(\zeta,\xi)}$.

It follows that $M$ is the graph of a function $f:\overline\Omega\rightarrow\R_v$.

One proves, using (b), that $f$ is continuous on $\Omega$, whence on $\overline\Omega\setminus Q$, by Lemma \ref{Lem:4.3.4}. Then continuity at every $q_j$ is proved using the Kontinuit\"atsatz on the domain of holomorphy $\Omega\times i\R_v$.
\end{proof}

\subsection{Regularity}\label{Sec:4.5}

The property: {\it $M\setminus P=(p_1,\ldots,p_4)$ is a smooth manifold with boundary} results from:

\subsubsection{}\label{Sec:4.5.1}
\begin{lem}\label{Lem:4.5.1}
 Let $U$ be a domain of $\C_z^{n-i}\times\R_u, n\geq 2, f:U\rightarrow \R_v$ a continuous function. Let $A\subset {\rm graph}(f)$ be a germ of complex analytic set of codimension 1. Then $A$ is a germ of complex manifold which is a graph of over $\C_z^{n-i}$.
\end{lem}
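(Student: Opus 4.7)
The plan is to show that the projection $\pi:\C^n\to\C^{n-1}_z$, $(z,w)\mapsto z$, restricts to $A$ as a single-sheeted cover of its image, so that $A$ is the graph of a holomorphic function $w=\phi(z)$. The key principle is that the real part of a nonzero non-constant holomorphic function cannot vanish on a set as large as its own zero locus.

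First, I would check that $\pi|_A$ has discrete fibres. For $z_0=\pi(p)$, the fibre $A\cap\pi^{-1}(z_0)$ is a complex analytic subset of the complex line $\{z_0\}\times\C_w$, contained in $\mathrm{graph}(f)\cap(\{z_0\}\times\C_w)$; the latter is the curve $\{(z_0,u+if(z_0,u))\}$, of real dimension $1$. So the fibre cannot contain any open piece of $\C_w$ and must be zero-dimensional. Since $A$ has pure complex dimension $n-1$, the Weierstrass preparation theorem, applied to each irreducible component, presents $A$ near $p$ as a finite branched analytic cover of a neighbourhood $W\subset\C^{n-1}_z$ of $z_0$, described by a monic polynomial $P(z,w)=w^k+a_1(z)w^{k-1}+\cdots+a_k(z)$.

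The main step, and the main obstacle, is to show $k=1$. Off the branch locus $B\subset W$, the covering splits into local holomorphic sections $\phi_1,\dots,\phi_k:W\setminus B\to\C_w$, each satisfying $\mathrm{Im}\,\phi_j(z)=f(z,\mathrm{Re}\,\phi_j(z))$ because $(z,\phi_j(z))\in\mathrm{graph}(f)$. Fix two such sections and put $h=\phi_j-\phi_\ell$. Whenever $\mathrm{Re}\,h(z_*)=0$, the two points $(z_*,\phi_j(z_*))$ and $(z_*,\phi_\ell(z_*))$ of $\mathrm{graph}(f)$ project to the same $(z_*,\mathrm{Re}\,\phi_j(z_*))$, so they coincide, giving $h(z_*)=0$. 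Hence $\{\mathrm{Re}\,h=0\}\subset\{h=0\}$.

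This inclusion forces $h\equiv 0$. Indeed, if $h\not\equiv 0$ and $\mathrm{Re}\,h\not\equiv 0$, then $\{\mathrm{Re}\,h=0\}$ is a real analytic set of real codimension $1$ in $W\setminus B$, strictly larger than the complex zero locus $\{h=0\}$ of real codimension $2$, contradicting the inclusion; and the remaining case $\mathrm{Re}\,h\equiv 0$ forces $h$ to be a purely imaginary constant $ic$ by the Cauchy--Riemann equations, after which the inclusion gives $c=0$. Hence all the sections coincide, $k=1$, and the Weierstrass polynomial reduces to $P(z,w)=w-\phi(z)$ for a single holomorphic $\phi$ on $W$. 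Therefore $A=\{w=\phi(z)\}$ is a complex submanifold presented as a graph over $\C^{n-1}_z$, as claimed. The critical input is the rigidity argument of this last paragraph, which converts the soft real constraint of lying in a continuous graph into the complex-analytic statement that different branches of the putative cover must actually be equal.
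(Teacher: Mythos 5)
Your overall strategy coincides with the paper's: establish $w$-regularity from the fact that each fibre of the projection over $z_0$ is a complex analytic subset of the real curve $\{(z_0,u+if(z_0,u))\}$, present $A$ as a $k$-sheeted branched cover via Weierstrass preparation, and then use the rigidity $\{\mathrm{Re}(\phi_j-\phi_\ell)=0\}\subset\{\phi_j-\phi_\ell=0\}$, forced by the graph condition, to make the branches coincide. (Both you and the paper assert without detail that the unbranched part of the cover splits into global single-valued sections, so I will not press on that point.)

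There is, however, a genuine gap in your concluding dichotomy. From $h\not\equiv0$ and $\mathrm{Re}\,h\not\equiv0$ you infer that $\{\mathrm{Re}\,h=0\}$ is a real-analytic set of real codimension $1$, hence too large to lie inside the complex hypersurface $\{h=0\}$. But the zero set of $\mathrm{Re}\,h$ may be empty -- take $h=1+z$ on a small ball, where $\mathrm{Re}\,h>0$ everywhere -- and then the inclusion $\{\mathrm{Re}\,h=0\}\subset\{h=0\}$ holds vacuously and produces no contradiction. Nothing in ``$h$ is a difference of two branches'' by itself rules this out on $W\setminus B$. What closes the gap, and what the paper does explicitly, is the behaviour at the base point of the germ: since the Weierstrass polynomial satisfies $P(0,w)=w^k$, every branch $\phi_j(z)$ tends to $0$ as $z\to0$, so $h$ is bounded, extends holomorphically across the branch locus and to the origin by Riemann's extension theorem, and satisfies $h(0)=0$. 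Then $\mathrm{Re}\,h$ is a non-constant pluriharmonic function that genuinely vanishes at $0$; restricting to a complex disc $\mathbf{D}$ through $0$ on which $h$ vanishes only at $0$ (as the paper does with $\tau_j-\tau_k$), its real part vanishes on a union of arcs through $0$, which cannot be contained in $\{h=0\}\cap\mathbf{D}=\{0\}$. You need this extension-to-the-origin step before the codimension count can be invoked; as written, the step ``$\{\mathrm{Re}\,h=0\}$ has real codimension $1$'' is unjustified.
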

\begin{proof}
Assume that $A$ is a germ at 0. Let $g\in {\mathcal O} , h\not= 0$ such that $A=\{h=0\}$. For $\varepsilon<<1$, let ${\bf D}_\varepsilon$ be the disc $\{z=0\}\cap\{\vert w\vert<\varepsilon\}$
, then $A\cap {\bf D}_\varepsilon=\{0\}$, i.e. $A$ is $w$-regular.

Let $\pi:\C^n_{z,w}\rightarrow\C^{n-1}_z$ be the projection. The local structure theorem for analytic sets gives:

{\it for some neighborhood $U$ of 0 in $\C^{n-1}_z$, there exists an analytic hypersurface $\Delta\subset U$ such that: $A_\Delta=A\cup((U\setminus\Delta)\times{\bf D}_\varepsilon)$ is a manifold;

$\pi/A_\Delta\rightarrow U\setminus\Delta$ is a $d (\in \N)$-sheeted covering.}

It is easy to show that the covering $\pi:A_\Delta\rightarrow U\setminus \Delta$ is trivial.

Then we may define $d$ holomorphic functioons $\tau_1,\ldots,\tau_d: U\setminus\Delta\rightarrow\C$ such that $A_\Delta$ is the union of the graphs of the $\tau_j$. By the Riemann extension theorem, the functions $\tau_j$ extend as holomorphic functions $\tau_j\in {\mathcal O}(U)$. Suppose that $\tau_j\not = \tau_k$, for $j\not = k$, then for some disc ${\bf D}\subset U$ centered at 0, we have $\tau_j\vert {\bf D}\not = \tau_k\vert {\bf D}$, then $(\tau_j-\tau_k)\vert_{\bf D}$ vanishes only at 0. But, from the hypothesis, in restriction to ${\bf D}$, $\{Re(\tau_j-\tau_k)=0\} \subset\{\tau_j-\tau_k=0\}\vert_{\bf D}=\{0\}$, impossible. \end{proof}

\subsection{}\label{Sec:4.6} \begin{proof}[Proof of the Theorem \ref{Thm:4.2}]

Consider the foliation of $S\setminus P$ given by the level sets of the smooth function $\nu: S\rightarrow\lbrack 0,1\rbrack$ (sections \ref{Sec:2.3} and \ref{Sec:2.7}) and set $L_t=\{\nu=t\}$ for $t\in (0.1)$. Let $V_t\subset \overline\Omega\times i\R_v\subset \C^n$ be the complex leaf of $M$ bounded by $L_t$.

By Proposition \ref{Prop:4.4.2}, $M$ is the graph of a continuous function over $\Omega$, and, by Lemma \ref{Lem:4.5.1}, each leaf $V_t$ is a complex smooth hypersurface and $\pi\vert_{V_t}$ is a submersion.

$\rightarrow$

Since $\Omega$ is strictly convex, as in Shcherbina (see 4.4.1, c)), $\pi_{\vert V_t}$ is 1-1, then, by Corollary \ref{Cor:4.3.5}, $\pi$ sends $V_t$ onto a domain $\Omega_t\subset\C_z^{n-1}$ with smooth boundary. Let

$\pi_u:(\C_z^{n-1}\times\R_u)\times i\R_v\rightarrow\R_u$

$\pi_v:(\C_z^{n-1}\times\R_u)\times i\R_v\rightarrow\R_v$

then $\pi_{u\vert L_t}=a_t.\pi_{\vert L_t}$ and $\pi_{v\vert L_t}=b_t.\pi_{\vert L_t}$ where $a_t$, $b_t$ are smooth functions on $b\Omega_t$. Moreover $b\Omega_t$, $a_t$, $b_t$ depend smoothly on $t$.

If $(z_t,w_t)\in M$, then $w_t$ varies on $V_t$, so $w_t$ is the holomorphic extension of $a_t+ib_t$ to $\Omega_t$. In particular $u_t$ and $v_t$ are smooth in $(z,t)$, from the Bochner-Martinelli formula.

$\displaystyle \frac{\partial u_t}{\partial t}$ is harmonic on $\Omega_t$ for each $t$ and has a smooth extension on $b\Omega_t$.

From Lemma \ref{Lem:4.3.4} and Corollary \ref{Cor:4.3.5}, $\displaystyle \frac{\partial u_t}{\partial t}$ does not vanish on $b\Omega_t$. Since the CR orbits $L_t$ are connected from Proposition \ref{Prop:3.4.1}, $b\Omega_t$ is also connected, hence $\displaystyle \frac{\partial u_t}{\partial t}$ has constant sign on $b\Omega_t$. Then, by the maximum principle, also $\displaystyle \frac{\partial u_t}{\partial t}$ on $\Omega_t$ and, in particular does not vanish. This implies that $M\setminus S$ is the graph of a smooth function over $\Omega$ which smoothly extends to $\overline\Omega\setminus Q$.

From Proposition \ref{Prop:4.4.2}, $M$ is the graph of a continuous
function over $\overline\Omega$.
\end{proof}

\subsection{Elementary smooth models}\label{Sec:4.7}

\subsubsection{Definition}\label{Sec:4.7.1}

{\it An elementary smooth model in $\C^n$} is an elementary model in the sense of section \ref{Sec:3.5.2} and satisfying the further condition which makes sense from Theorem \ref{Thm:4.2}:

(G) Let $(z,w)$ be the coordinates in $\C^{n-1}\times \C$, with $z=(z_1, \ldots,z_{n-1}), w=u+iv=z_n$, let $\Omega$ be a strictly pseudoconvex domain of $\C^{n-1}\times\R_u$; assume that $T'$ is the graph of a smooth function $g: b\Omega\rightarrow \R_v$.

\subsubsection{}\label{Sec:4.7.2}
\begin{thm}\label{Thm:4.7.2}
Let $T$ be an elementary smooth model. Then, there exists a continuous function $f:\overline\Omega\rightarrow \R_v$ which is smooth on $\overline\Omega\setminus Q$ and such that $f_{\vert b\Omega}=g$, and $M_0=graph(f)\setminus S$ is a smooth Levi flat hypersurface of $\C^n$; in particular, $S$ is the boundary of the hypersurface $M=graph(f)$
\end{thm}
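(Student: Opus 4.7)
The plan is to adapt the proof of Theorem~\ref{Thm:4.2} to the elementary-model setting, replacing Theorem~\ref{Thm:3.4.3} and Proposition~\ref{Prop:3.4.1} by their elementary-model analogues Theorem~\ref{Thm:3.6} and Theorem~\ref{Thm:3.5.5}. Concretely, Theorem~\ref{Thm:3.6} produces the Levi-flat subvariety $\tilde M\subset\C\times\C^n$ with boundary $\tilde T$, and Theorem~\ref{Thm:3.5.5} supplies a smooth function $\nu$ on $T''$ whose level sets are the leaves of the $1$-codimensional CR-orbit foliation on $T$. Setting $M:=\pi(\tilde M)\subset\C^n$, one must show that $M$ is the graph over $\overline\Omega$ of a continuous function $f$ extending $g$, smooth away from the finite projection set $Q$ of the complex points of $T$; the leaf structure of $\tilde M$ then transfers to $M$ and gives the desired Levi-flat $M_0=M\setminus T$.

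The first block of steps follows Sections \ref{Sec:4.3} and \ref{Sec:4.4} verbatim. Using condition~(G) we have $T\subset b\Omega\times\R_v$, which is strictly pseudoconvex, and we work inside $D=\Omega\times[m_1,m_2]$ as in \ref{Sec:4.3.3}. Every CR orbit $L_t=\{\nu=t\}$ is a compact maximally complex $(2n-3)$-submanifold of $bD$, hence by Harvey--Lawson (\ref{Sec:4.3.1}) bounds a unique $V_t\subset\overline D$; Lemma~\ref{Lem:4.3.2} ensures pairwise disjointness of the $V_t$. Near CR points of $T$, Lemma~\ref{Lem:4.3.4} and Corollary~\ref{Cor:4.3.5} give local graph structure. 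To globalise, I would repeat the slicing argument of Proposition~\ref{Prop:4.4.2}: fix $a\in\C^{n-1}\setminus\{0\}$, slice by the affine $4$-planes $L_{(\zeta,\xi)}$, and apply Shcherbina's Main Theorem ($(a_{ii})$, $(a_{iii})$) together with Lemma~\ref{Lem:2.4} to the planar graph $T_{(\zeta,\xi)}$. This identifies $M$ with a continuous graph over $\overline\Omega\setminus Q$; continuity at each $q_j\in Q$ then follows from the Kontinuit\"atssatz in the domain of holomorphy $\Omega\times i\R_v$.

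For regularity on $\overline\Omega\setminus Q$ I would transplant the argument of Section~\ref{Sec:4.6}: Lemma~\ref{Lem:4.5.1} upgrades each leaf $V_t$ to a smooth complex hypersurface which is a graph over some $\Omega_t\subset\C^{n-1}$; one then writes $\pi_u|_{L_t}=a_t\,\pi|_{L_t}$, $\pi_v|_{L_t}=b_t\,\pi|_{L_t}$ with $a_t,b_t$ smooth in $(z,t)$, extends $a_t+ib_t$ holomorphically via Bochner--Martinelli, and checks that $\partial u_t/\partial t$ is non-vanishing on $b\Omega_t$ by Hopf's lemma. The maximum principle then forces $\partial u_t/\partial t$ to keep constant sign on all of $\Omega_t$, which yields a smooth graph over $\overline\Omega\setminus Q$.

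The main obstacle is the presence of special $1$-hyperbolic points, absent from the elliptic case treated in \cite{DTZ09}. By Proposition~\ref{Prop:2.7.2}, near such a point the CR orbits appear as single spheres on one side of the singular orbit $\Sigma'$ and as disjoint unions of two spheres on the other, while $\Sigma'$ itself has two connected components (Lemma~\ref{Lem:2.6.3'}). Two consequences require care: first, for values of $t$ on the two-sphere side, the projected domains $\Omega_t$ are disconnected, so the connectedness-of-$b\Omega_t$ step in the sign argument of Section~\ref{Sec:4.6} must be carried out component by component; second, continuity of $f$ at the projection $q_j$ of a $1$-hyperbolic point demands that the two sheets of leaves converging to $\Sigma'$ agree at $q_j$, which is delivered by the geometric description of $\Sigma'$ in Lemma~\ref{Lem:2.6.3'} combined with the Kontinuit\"atssatz on $\Omega\times i\R_v$. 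These are local adjustments; the global scheme of Theorem~\ref{Thm:4.2} is otherwise preserved.
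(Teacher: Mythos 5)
Your proposal is correct and follows exactly the route the paper intends: the paper's own proof of Theorem~\ref{Thm:4.7.2} consists of the single remark that it is ``similar to the proof of Theorem~\ref{Thm:4.2}'', and your substitution of Theorem~\ref{Thm:3.6} and Theorem~\ref{Thm:3.5.5} for Theorem~\ref{Thm:3.4.3} and Proposition~\ref{Prop:3.4.1}, followed by the slicing, continuity and regularity arguments of Sections~\ref{Sec:4.3}--\ref{Sec:4.6}, is precisely that adaptation spelled out. Your remarks on the $1$-hyperbolic points (disconnected $\Omega_t$, continuity at their projections) are issues already handled in the horned-sphere case of Theorem~\ref{Thm:4.2}, so they introduce nothing beyond the paper's argument.
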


\begin{proof} similar to the proof of Theorem \ref{Thm:4.2}.\end{proof}

\subsubsection{Gluing of elementary smooth models}\label{Sec:4.7.3}

In an open set of $\C^n$, a coordinate system $(z,w)$ of $\C^{n-1}_z\times\R_u$ defines an $(n-1,1)$-{\it frame}.

To define the gluing of elementary models (section \ref{Sec:3.7}) we considered a CR-isomorphism from an open set of $\C^n$ induced by a holomorphic isomorphism of the ambient space $\C^n$ onto a an open set of $\C^n$. To define the gluing of elementary smooth models, we have to consider a holomorphic isomorphism of the ambient space $\C^n$ onto an open set of $\C^n$ sending an $(n-1,1)$-frame of $\C^{n-1}_z\times\R_u$ onto an $(n-1,1)$-frame of $\C^{n-1}_{z'}\times\R_{u'}$.

As in section \ref{Sec:3.7.1}, we will define a submanifold $S'$ of $X$ obtained by gluing of elementary smooth models by induction on the number $m$ of models. An elementary smooth model $T$ in $X$ is
the image of an elementary smooth model $T_0$ of $\C^n$ by an analytic isomorphism of a neighborhood of $T_0$ in $\C^n$ into $X$.

\smallskip

\noindent {\it Gluing an elementary smooth model $T$ of type $(d_1)$ to a free down-1-hyperbolic point of $S'$}.

Every elementary smooth model is contained in a cylinder $b\Omega\times\R_v$ determined by $\Omega$ and an $(n-1,1)$-frame. Two sets $\Omega$ are {\it compatible} if either they coincide or one is part of the other.

The announced gluing is defined in the following way: there exists a CR-isomorphism $h_1$ from a neighborhood $V_1$ of $\overline\sigma'_1$ induced by a holomorphic isomorphism of the ambient space $\C^n$ onto a neighborhood of $\sigma_1$ in $S'$. Let $k_1$ be a CR-isomorphism from a neighborhood $T"_1$ of $T'_1$ into $X$ such that $k_1\vert V_1=h_1$, and there exists a common $(n-1,1)$-frame on which the corresponding sets $\Omega$ are compatible.
The existence of such a situation is possible as the example of the horned (almost everywhere) smooth sphere shows (Theorem \ref{Thm:4.2}.).

Remark that the gluing implies that the obtained submanifold $S'$ is $C^0$ and smooth except at the complex points.

Other gluing are obtained in a similar way. Hence:

\begin{thm}
The manifold $S'$ obtained by gluing of elementary smooth models is of class $C^0$, and smooth except at the complex points.
\end{thm}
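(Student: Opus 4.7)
The plan is to argue by induction on the number $m$ of elementary smooth models used to construct $S'$. For $m=1$, the statement is immediate from Definition~\ref{Sec:4.7.1}: a single elementary smooth model is the graph of a smooth function $g:b\Omega\to\R_v$, hence $C^\infty$ everywhere.

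For the inductive step, I would suppose that $S'_m$ has been built from $m$ elementary smooth models and is $C^0$ everywhere and smooth away from its flat complex points. The construction of section~\ref{Sec:4.7.3} then glues a further elementary smooth model $T$ at a free $1$-hyperbolic point $p\in S'_m$, producing $S'_{m+1}:=S'_m\cup k_1(T''_1)$, where $h_1$ is a CR-isomorphism from a neighborhood $V_1\supset\overline\sigma'_1$ of $T$ onto a neighborhood of $\sigma_1\subset S'_m$ induced by a holomorphic isomorphism of the ambient $\C^n$, $k_1$ extends $h_1$ to a neighborhood $T''_1$ of $T'_1$, and the compatibility condition on $(n-1,1)$-frames ensures that both neighborhoods are realized as graphs over a common base in $\C^{n-1}_z\times\R_u$.

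I would then verify the regularity of $S'_{m+1}$ at two kinds of points. At any CR point $q\in S'_{m+1}$ lying in only one of the two pieces, smoothness at $q$ is inherited from that piece, either by the inductive hypothesis (if $q\in S'_m$) or by Definition~\ref{Sec:4.7.1} (if $q\in k_1(T''_1)$). At any point $q\in k_1(V_1)$ of the overlap, the two pieces coincide as subsets of the ambient $\C^n$, since $k_1|_{V_1}=h_1$ is a single biholomorphic identification of ambient neighborhoods; this promotes the gluing to a smooth one, and in particular gives smoothness across the CR part $\sigma_1$ of the singular orbit. At the remaining flat complex points (the elliptic points already present and the newly glued $1$-hyperbolic point $p$), only continuity need be established: the common quadratic normal form of Definition~\ref{Sec:3.5.2}(vi) guarantees that the two graph representations agree up to order $O(|z|^3)$ at $p$, which is enough to ensure that the identification of the two pieces is $C^0$ there.

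The step I expect to be the main obstacle is the verification at points of the overlap that the two pieces truly coincide as subsets of the ambient space, rather than merely being abstractly identified: this uses in an essential way the assumed compatibility of $(n-1,1)$-frames (condition (c) of section~\ref{Sec:4.7.3}) combined with the fact that $k_1$ extends $h_1$ as a single biholomorphism. The sphere-with-two-horns construction of Theorem~\ref{Thm:4.2} supplies the prototype showing that such frame compatibility can indeed be arranged, and once this identification is made precise, the smoothness claim at CR points and the $C^0$ claim at complex points both follow from the arguments above.
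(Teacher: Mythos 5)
Your overall strategy --- induction on the number $m$ of models, smoothness at points interior to a single piece, smoothness on the overlap because $k_1\vert_{V_1}=h_1$ comes from a single ambient biholomorphism, and a separate $C^0$ argument at the complex points --- is exactly the shape of argument the paper intends; indeed the paper offers nothing here beyond the one-line remark that ``the gluing implies'' the statement, so your write-up is already more explicit than the source. The base case and the treatment of CR points (both those interior to one piece and those on the overlap $k_1(V_1)$) are fine.

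The step I would not accept as written is the $C^0$ claim at the glued $1$-hyperbolic point $p$. You derive it from the assertion that the two graph representations ``agree up to order $O(|z|^3)$ at $p$'' via the normal form of Definition~\ref{Sec:3.5.2}$(vi)$. First, that normal form is attained in coordinates chosen separately for each model, and nothing in the definition of the gluing forces the two quadratic forms (the invariants $\lambda_j$) to coincide after one model is transported by $k_1$; frame compatibility constrains the splitting $\C^{n-1}_z\times\R_u\times i\R_v$, not the $2$-jet of the defining function. Second, and more importantly, agreement of $2$-jets at the single point $p$ is neither necessary nor sufficient for continuity of $S'$ near $p$: there the glued object is a union of three sheets, each the graph of a function $\varphi_i=Q_i+O(|z|^3)$ over a region whose boundary is the projection of the corresponding singular orbit $\{\varphi_i=0\}$, and what continuity requires is that these regions tile a neighborhood of the base point below $p$ and that the graphing functions match along the common boundaries --- that is, that the ambient biholomorphism carries the closed singular orbit $\overline{\sigma'_1}$ of $T$, endpoint $p$ included, onto $\overline{\sigma_1}\subset S'$. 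If the singular orbits are only identified along the open arcs, the three sheets can leave a gap or overlap over an open set, and $S'$ fails to be even a topological manifold at $p$. This identification is presumably what the hypothesis that $h_1$ is defined on a neighborhood of $\overline{\sigma'_1}$, together with the compatibility of the domains $\Omega$, is meant to supply, but it is the point that must be made explicit; once it is, continuity at $p$ follows because each $\varphi_i$ tends to $0$ along the common boundary, and only smoothness across $\Sigma'$ is lost, consistent with the statement.
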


\begin{cor}
The manifold $S'$ is the boundary of manifold
$M$ of class $C^\infty$ whose interior is a Levi-flat smooth hypersurface.
\end{cor}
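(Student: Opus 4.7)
The plan is to apply Theorem~\ref{Thm:4.7.2} to each of the finitely many elementary smooth models in the gluing decomposition of $S'$, and then verify that the resulting local Levi-flat hypersurfaces assemble into a single smooth Levi-flat hypersurface $M$ whose boundary is the whole of $S'$.

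First, enumerate the elementary smooth models $T^{(1)},\ldots,T^{(m)}$ whose gluing produces $S'$. For each $T^{(j)}$, condition (G) of section~\ref{Sec:4.7.1} supplies an $(n-1,1)$-frame $(z^{(j)},w^{(j)})$, a strictly pseudoconvex domain $\Omega^{(j)}\subset\C^{n-1}\times\R_u$, and a smooth boundary datum $g^{(j)}:b\Omega^{(j)}\to\R_v$ with $T^{(j)}=\mathrm{gr}(g^{(j)})$. Theorem~\ref{Thm:4.7.2} then produces a continuous $f^{(j)}:\overline{\Omega^{(j)}}\to\R_v$, smooth off the finite set $Q^{(j)}$ of projections of complex points, such that $M^{(j)}:=\mathrm{gr}(f^{(j)})$ is a hypersurface with $\partial M^{(j)}=T^{(j)}$ and $M^{(j)}\setminus T^{(j)}$ is a smooth Levi-flat hypersurface foliated by holomorphic graphs.

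Next, patch these pieces along the gluing interfaces. By the definition of gluing of elementary smooth models in section~\ref{Sec:4.7.3}, two models $T^{(j)},T^{(k)}$ sharing a free down-$1$-hyperbolic point $p$ are identified, in a neighbourhood of the singular CR orbit $\Sigma'$ through $p$, by a CR-isomorphism induced by a holomorphic ambient automorphism under which the $(n-1,1)$-frames and the corresponding domains $\Omega^{(j)},\Omega^{(k)}$ are compatible. In such a common frame, the leaves of $M^{(j)}$ and of $M^{(k)}$ near $\Sigma'$ are both solutions of the Harvey-Lawson problem for the same family of compact CR orbits of $S'$; the uniqueness clause of Theorem~\ref{Thm:3.1.2} forces these leaves to coincide, so $M^{(j)}$ and $M^{(k)}$ agree on the overlap and their union is a well-defined hypersurface. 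Taking $M:=\bigcup_j M^{(j)}$ gives a manifold with boundary $\bigcup_j T^{(j)}=S'$.

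Finally, verify interior smoothness. Away from the singular CR orbits, each $M^{(j)}$ is smooth by Theorem~\ref{Thm:4.7.2}. In a neighbourhood of a gluing orbit $\Sigma'$, the smooth transverse function $\nu$ furnished by Proposition~\ref{Prop:2.7.2}, together with the common $(n-1,1)$-frame, gives smooth coordinates in which the matched leaves of $M^{(j)}$ and $M^{(k)}$ extend to one another as a smooth Levi-flat hypersurface; the interior of $M$ therefore avoids the complex points of $S'$ and is smooth throughout. The main obstacle, and the reason the compatibility of frames is built into the definition of gluing, is precisely this smoothness across the interface: without a common $(n-1,1)$-frame the two graphs $M^{(j)}$ and $M^{(k)}$ would only be identified as sets, whereas the frame compatibility and the Harvey-Lawson uniqueness together upgrade this to a $C^\infty$ identification. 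This yields the corollary.
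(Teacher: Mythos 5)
Your argument is essentially the paper's intended one: the paper gives no explicit proof of this corollary, but (mirroring the one-line proof of Theorem~\ref{Thm:3.7.4}) it clearly means ``from Theorem~\ref{Thm:4.7.2} and the definition of gluing of elementary smooth models,'' which is exactly what you carry out. Your additional details --- matching the leaves of adjacent models via uniqueness of the Harvey--Lawson solution over the shared CR orbits, and using the built-in compatibility of $(n-1,1)$-frames for smoothness across the interface --- are a faithful expansion of that route rather than a different approach.
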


\providecommand{\bysame}{\leavevmode\hbox to3em{\hrulefill}\thinspace}
\providecommand{\MR}{\relax\ifhmode\unskip\space\fi MR }
\providecommand{\MRhref}[2]{%
  \href{http://www.ams.org/mathscinet-getitem?mr=#1}{#2}
}
\providecommand{\href}[2]{#2}

\end{document}